\documentclass[12pt]{article}
\usepackage{amsmath}
\usepackage{amsxtra}
\usepackage{amscd}
\usepackage{amsthm}
\usepackage{thmtools}
\usepackage{amsfonts}
\usepackage{amssymb}
\usepackage{eucal}
\usepackage[all]{xy}
\usepackage{graphicx}
\usepackage{pifont}
\usepackage{comment}
\usepackage{tikz-cd}
\usepackage{enumitem}
\usepackage{latexsym}
\usepackage{framed}
\usepackage{fullpage}
\usepackage{hyperref}
    \hypersetup{colorlinks=true,citecolor=blue,urlcolor =black,linkbordercolor={1 0 0}}

\usepackage[noabbrev,capitalize]{cleveref}

\allowdisplaybreaks[1]

\theoremstyle{plain}
\newtheorem{theorem}{Theorem}[section]
\newtheorem{corollary}[theorem]{Corollary}
\newtheorem{lemma}[theorem]{Lemma}
\newtheorem{proposition}[theorem]{Proposition}

\newtheorem{conjecture}{Conjecture}

\theoremstyle{definition}
\newtheorem{definition}[theorem]{Definition}

\newtheorem{remark}[theorem]{Remark}

\newcommand\nc{\newcommand}
\nc\on{\operatorname}
\nc\renc{\renewcommand}
\nc\ssec{\subsection}
\nc\sssec{\subsubsection}
\nc{\mf}{\mathfrak}
\nc\N{{\mathbf N}}
\nc\Z{{\mathbf Z}}
\nc\Q{{\mathbf Q}}
\nc\R{{\mathbf R}}
\nc\C{{\mathbf C}}
\nc\T{{\mathbf T}}
\nc\D{{\mathbf D}}
\nc{\G}{{\mathbf G}}
\nc\mO{{\mathcal O}}
\nc{\mc}{\mathcal }
\nc{\bb}{\mathbf }
\nc{\HH}{\mathrm{H}}
\nc{\eps}{\varepsilon}
\nc\F{{\mathbf F}}
\nc{\tG}{\widetilde{G}}
\nc{\tg}{\widetilde{\mf{g}}}
\nc{\Art}{\on{Art}}
\nc{\HC}{\on{HC}}

\nc{\Hom}{\on{Hom}}
\nc{\Aut}{\on{Aut}}
\nc{\End}{\on{End}}
\nc{\Spec}{\on{Spec}}
\nc{\Reg}{\on{Reg}}
\nc{\Specm}{\on{Specm}}
\nc{\Spl}{\on{Spl}}
\nc{\Gal}{\on{Gal}}
\nc{\Cl}{\on{Cl}}
\nc{\GL}{\on{GL}}
\nc{\SL}{\on{SL}}
\nc{\Frob}{\on{Frob}}
\nc{\Mod}{\on{Mod}}
\nc{\Ind}{\on{Ind}}
\nc{\Ord}{\on{Ord}}
\nc{\Ext}{\on{Ext}}
\nc{\BM}{\on{BM}}

\nc{\ip}[1]{\langle #1 \rangle}
\nc{\ips}[1]{\langle #1,#1 \rangle}

\nc{\cis}{\cos\theta + i \sin\theta}
\nc{\Real}{\on{Re}}
\nc{\Imag}{\on{Im}}
\nc{\Res}{\on{Res}}

\nc\A{{\mathbf A}}
\nc\BP{{\mathbf P}}
\nc{\Proj}{\on{Proj}}

\nc\fa{{\mathfrak a}}
\nc\fp{{\mathfrak p}}
\nc\fq{{\mathfrak q}}
\nc\fm{{\mathfrak m}}
\nc\pt{\mathrm{pt}}
\nc{\bd}{\mathbf{d}}
\nc{\disc}{\on{disc}}
\nc{\Tr}{\on{Tr}}

\nc\ol{\overline}
\nc{\ul}{\underline}
\nc\wt{\widetilde}
\nc{\wh}{\widehat}
\nc{\one}{{\mathbf{1}}}

\nc{\id}{\mathrm{id}}
\nc{\uHom}{\ul\Hom}
\nc{\tHom}{\ul\uHom}
\nc{\la}{\on{la}}

\nc\E{{\mathbf E}}
\nc\CC{{\mathcal C}}

\nc\abs[1]{| #1 |}

\title{Infinitesimal characters for the completed cohomology of $\GL_n$ over CM fields}

\author{Jelena Ivan\v{c}i\'c and Vaughan McDonald}

\date{}

\begin{document}

\maketitle


\begin{abstract}
    Let $p$ be a prime, and let $F$ be a CM field containing an imaginary quadratic field in which $p$ splits. We show that the locally analytic vectors of Hecke eigenspaces in the ($p$-adic) completed cohomology of $\GL_n/F$ localized at a non--Eisenstein decomposed generic maximal ideal admit infinitesimal characters determined by the Hodge--Tate--Sen weights of the corresponding Galois representations, thus confirming a conjecture of Dospinescu--Pa\v{s}k\={u}nas--Schraen in this case.
\end{abstract}

\tableofcontents

\section{Introduction}
Let $p$ be a prime number and $L/\Q_p$ a finite extension.
For a connected reductive group $G/\Q$ and a compact open level subgroup $K^p \subset G(\A_{\Q}^{p, \infty})$, the completed cohomology groups defined by Emerton \cite{emerton2006interpolation} $\wt{\HH}^i := \wt{\HH}^i(K^p, L)$ have proven to be a fruitful setting for studying the $p$-adic Langlands program. These are $p$-adic Banach spaces with a continuous action of both the group $G(\Q_p)$ and a Hecke algebra $\T^S(K^p)$, and contain rich information about the relation between ($p$-adic) automorphic forms and Galois representations. One recent version of this interplay is the study of infinitesimal characters, as systematically initiated by Dospinescu--Pa\v{s}k\={u}nas--Schraen in \cite{DPS25}. Let us denote by $\wt{\HH}^{i, \la}$ the space of locally analytic vectors for $G(\mathbf Q_p)$ in the completed cohomology $\wt{\HH}^{i}$ -- this has an action of the center of the universal enveloping algebra $Z(\mf{g})_{\mathbf Q_p}$ of $\mf{g}= \on{Lie}(G)$. This action captures infinitesimal characters seen on the automorphic side, which by the Langlands program should be related to $p$-adic Hodge theoretic properties of the corresponding local Galois representations. In \cite{DPS25}, this is formulated as follows: given a Galois representation $\rho: \Gal_F \to {}^LG_f(\overline{\mathbf{Q}}_p)$ ($L$--parameter), the authors construct a character $\zeta_{\rho}^C:Z(\mf{g})_{\mathbf Q_p} \to \overline{\mathbf{Q}}_p$ (using the Sen operator of $\rho$ and a key shift from the $L$-group to the $C$-group) which encodes (up to this shift) the Hodge--Tate--Sen weights of $\rho$. The authors of \textit{loc.\,cit.} then conjecture that the action of $Z(\mf{g})_{\mathbf Q_p}$ on $\wt{\HH}^{i, \on{la}}$ (up to a localization or passing to a Hecke eigenspace) is given by $\zeta^{C}_{\rho}$ for a suitable choice of a $L$--parameter and a shift to the $C$--group, both choices being governed by the global Langlands program.

More precisely, given a continuous ring homomorphism $x: \T^S(K^p)[1/p] \to \overline{\mathbf Q}_p$ with kernel $\mf{m}_x$, the global Langlands program predicts one can construct a Galois representation $\rho_x: \Gal_{\Q,S} \to {}^LG_f(\overline{\mathbf Q}_p)$ such that, at unramified primes $\ell\notin S$, the conjugacy class of $\rho_x(\Frob_{\ell})$ is related to the restriction of $x$ to the local Hecke algebra at $\ell$ under the Satake isomorphism.\footnote{See \cite[9.13]{DPS25} for a formulation for $C$--parameters, or \cite[Conjecture 3.2.1]{buzzard2014conjectural} (interpreted for any Hecke eigensystem and not just automorphic representations) for a formulation for $L$--parameters.} The recipe of \cite{DPS25} produces from $\rho_x$ a map $\zeta_{\rho_x}^C: Z(\mf{g})_{\mathbf Q_p} \to \overline{\mathbf Q}_p$. Their conjecture then says the following:
\begin{conjecture}[{\cite[Conjecture 9.34]{DPS25}}]
\label{DPSconjecture}
Let $x: \T^S(K^p)[1/p] \to \overline{\mathbf Q}_p$ be a continuous ring homomorphism with the corresponding Galois representation $\rho_x$ and $i \ge 0$ an integer. Then $Z(\mf{g})_{\mathbf Q_p}$ acts on $(\wt{\HH}^i[\mf{m}_x])^{\la}\otimes_{\T^S(K^p), x}\ol{\Q}_p$ via $\zeta_{\rho_x}^C$.
\end{conjecture}
In order to emphasize that the ``infinitesimal character'' $\zeta_{\rho}^C$ comes from Hodge-theoretic properties of the Galois side, we propose the following terminology.\footnote{We thank Peter Scholze for suggesting this.} 
\begin{definition}
    Let $A$ be a $p$-adic Banach space and $\rho: \Gal_F \rightarrow {}^LG_f(A)$ a continuous Galois representation. 
    We call the character $\zeta^C_{\rho}$ attached to $\rho$ by \cite{DPS25} a \textbf{generalized Hodge-Tate-Sen character} of $\rho$. We use the same terminology for characters $\zeta^C_{\theta}$ attached to Galois pseudocharacters $\theta$ as in \cite{PasQua25} (which is a generalization of \cite{DPS25}).
\end{definition}
Our main theorem confirms Conjecture \ref{DPSconjecture} (localized at a suitable maximal ideal of the Hecke algebra) when $G = \on{Res}_{F/\Q}\GL_n$ and $F$ is a CM field.
\begin{theorem}[Theorem \ref{theorem: main theorem v2}]
\label{theorem: Main Theorem}
    Suppose $F$ is a CM field which contains an imaginary quadratic field $F_0$ in which $p$ splits.\footnote{The result is slightly stronger: it suffices to assume that every place $\overline{v} \mid p$ of the maximal totally real subfield $F^+ \subset F$ splits in $F$, instead of $p$ splitting in an imaginary quadratic subfield. For further details on the assumptions, see Remark \ref{remark: discussion on ass.}} Let $\mf{m}\subset \T^S(K^p)$ be a maximal ideal which is non--Eisenstein and decomposed generic. Then, for any $i \ge 0$, the action of $Z(\mf{g})_{\Q_p}$ on $\wt{\HH}_{\mf{m}}^{i,\la}$ is given by the expected generalized Hodge-Tate-Sen character.
\end{theorem}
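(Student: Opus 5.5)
Via the unitary Shimura varieties of Caraiani--Scholze, I will transfer the problem to the setting where Pan's geometric Sen theory applies.

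\textbf{Step 1: reduction to a unitary group.} Choose a quasi-split unitary group $\tG$ in $2n$ variables over $F^+$ whose Shimura variety has a Siegel parabolic boundary stratum with Levi $\on{Res}_{F/\Q}\GL_n$. Following Caraiani--Scholze and the ten-author paper, there is a map from the non-Eisenstein, decomposed-generic localized cohomology of the locally symmetric space for $\GL_n/F$ into the boundary of the Borel--Serre compactification of the Shimura variety for $\tG$. At $\mf{m}$ this realizes $\wt{\HH}^i_{\mf{m}}$ as a subquotient of the completed cohomology $\wt{\HH}^{j}_{\tilde{\mf m}}$ of $\tG$, localized at the induced maximal ideal $\tilde{\mf m}$. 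The map is equivariant for the action of the Siegel parabolic $P(\Q_p)\subset \tG(\Q_p)$; this is where we use that $p$ splits in $F_0$, or that places above $p$ in $F^+$ split. I will do this step at the level of completed cohomology and check that it is compatible with passing to locally analytic vectors. The point is that $\wt{\HH}^{i,\la}_{\mf m}$ is then a $\mf{p}$-equivariant subquotient of $\wt{\HH}^{j,\la}_{\tilde{\mf m}}$, with $Z(\mf{g})$ seen through the Levi. Decomposed genericity ensures vanishing outside the middle degree, or at least that only the boundary contributes in the expected way. It also ensures that $\tilde{\mf m}$ is non-Eisenstein for $\tG$.

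\textbf{Step 2: infinitesimal characters for $\tG$.} For the unitary Shimura variety, I will use Pan's method as generalized by Camargo and Rodr\'iguez Camargo. Completed cohomology is computed by the cohomology of the sheaf $\mathcal{O}^{\la}$ on the perfectoid Shimura variety at infinite level. There the action of $Z(\tilde{\mf g})$ factors through the horizontal action of the Cartan, by Harish-Chandra theory. That Cartan action is identified with the geometric Sen operator, and hence with the Sen operator of the Galois pseudocharacter valued in $\T(K^p)_{\tilde{\mf m}}$ via the Hodge--Tate period map. This produces the generalized Hodge--Tate--Sen character $\zeta^C_{\tilde\theta}$ of \cite{PasQua25} on $\wt{\HH}^{\la}_{\tilde{\mf m}}$, including the $\rho$-shift coming from the $C$-group.

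\textbf{Step 3: descent to $\GL_n$.} The Galois representation for $\tilde{\mf m}$ is $\rho_{\mf m}\oplus\rho_{\mf m}^{c,\vee}(1-2n)$. I will restrict the central character of $\tilde{\mf g}$ to the Levi through the Harish-Chandra map, $Z(\tilde{\mf g})\to Z(\mf{l})$ twisted by the half-sum of roots of the unipotent radical. This gives a character of $Z(\mf{l})$ only up to the Weyl group action. I expect this to be the main obstacle. Knowledge of $Z(\tilde{\mf g})$ does not determine the action of $Z(\mf{g}_{\GL_n})$ on the Levi-subquotient, because there are ambiguities from Weyl-group permutations mixing the two blocks. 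To resolve this I would use the finer structure of Step 2. Pan's argument gives the action of the full horizontal Cartan $\mathfrak h$ on $\mathcal{O}^{\la}$, not just $Z(\tilde{\mf g})$. On the boundary stratum, which lies over a specific Bruhat cell of the flag variety, this pins down the Levi character directly. The unipotent radical acts trivially on $N$-coinvariants, so the character of $Z(\mf{l})$ restricts to the $\GL_n$ factor as $\zeta^C_{\rho_{\mf m}}$, after matching shifts.

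Finally, the statement is pseudocharacter-valued: the Sen operator lives over $\T^S(K^p)_{\mf m}$, which is nilpotent-thick. I will therefore check that Conjecture \ref{DPSconjecture} at each eigensystem $x$ follows, and that no nilpotent discrepancy remains. For this I would use the fact that Sen operators commute with the Hecke action, and that the characteristic polynomial identity holds over the localized Hecke algebra modulo a nilpotent ideal. That ideal is handled by Newton--Thorne or by the ten-author nilpotence bounds. These bounds are harmless after tensoring via $x$.
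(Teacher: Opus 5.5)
\textbf{There is a genuine gap, in Step~2 and again in Step~3.}

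\textbf{Step 2.} Geometric Sen theory (Pan, Rodr\'iguez Camargo) gives two things. First, $Z(\tg)$ acts on $\mathcal{O}^{\la}$ through the horizontal Cartan action $\theta_{\mf h}$. Second, $\theta_{\mf h}(\mu)$, for the Hodge cocharacter $\mu$, is identified with the arithmetic Sen operator of the Galois action on the cohomology of the Shimura variety. Your ``hence'', linking this to the Sen operator of the Hecke-valued pseudocharacter $\wt{\Theta}$, is unsupported. It would need an Eichler--Shimura-type description of the completed cohomology of $\tG$ at $\wt{\mf m}$ as a Galois module in terms of $\wt{\Theta}$. That description would have to hold for a non-compact $U(n,n)$ Shimura variety and at an Eisenstein ideal. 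Moreover, the representation realized there is built from $\wedge^n\wt{\rho}$ (tensor-induced over the places of $F^+$), not from $\wt{\rho}$. Even granting all that, recovering a character of all of $Z(\tg)$ from the single operator $\theta_{\mf h}(\mu)$ needs a further argument you do not give.

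Two further errors feed into this. First, $\wt{\mf m}$ is Eisenstein, contrary to your Step~1. Its residual pseudocharacter is $\ol{\rho}_{\mf m}\oplus\ol{\rho}_{\mf m}^{c,\vee}(1-2n)$, and if it were non-Eisenstein the localized boundary cohomology would vanish. Second, $\wt{\HH}^{i}_{\mf m}$ lands in $\wt{\HH}^{i+1}_c$, not in ordinary cohomology, which at $\wt{\mf m}$ is concentrated in degree $d$.

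The paper avoids all of this. It proves the infinitesimal character on $\wt{\HH}^d$ at $\wt{i}(\wt{\mf m})$ by the density argument of Dospinescu--Pa\v{s}k\=unas--Schraen:
\begin{itemize}
\item Caraiani--Scholze vanishing and Hansen--Johansson give a surjection $\mc{C}^{\on{cts}}(\wt{K}_p,L)^{\oplus m}\twoheadrightarrow\wt{\HH}^d$, so $V\otimes\tau$-isotypic vectors are dense.
\item These vectors are classical and come from cuspidal cohomological $\pi$, whose Galois representations have Hodge--Tate weights known by the ten-author paper.
\item Pa\v{s}k\=unas--Quast handles the pseudocharacter.
\end{itemize}
It then transports the result to compactly supported cohomology in every degree by Poincar\'e duality for completed homology tensored up to $D(\wt{K}_p)$.

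\textbf{Step 3.} You correctly locate the obstacle: $\HC^{\ol P}(Z(\tg))$ is a proper subalgebra of $Z(\mf g)$, and its character fixes only the multiset of weights, not how they split between the two blocks. Your remedy is not an argument, however. The $\GL_n/F$ locally symmetric spaces have no Hodge--Tate period map, and the boundary in play is the Borel--Serre boundary. Nothing in the proposal relates $\theta_{\mf h}$ to the $Z(\mf g)$-action on the $\ol{\mf n}$-annihilated copy of $\wt{\HH}^{i,\la}_{\mf m}$ inside $\wt{\HH}^{i+1,\la}_c$.

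The missing idea is a twist. For $\chi=\chi_{\on{cyc}}$ there is an isomorphism $\wt{\HH}^{i}_{\mf m}\cong\wt{\HH}^{i}_{\mf m(\chi)}\otimes\chi_p^{-1}$. On Harish-Chandra parameters this is the shift $x_i\mapsto x_i+1$, matching $\rho_{\mf m(\chi)}\cong\rho_{\mf m}\otimes\chi$ on the Galois side. Running the unitary argument at $\mf m(\chi)$ then gives the expected character on $\phi(\HC^{\ol P}(Z(\tg)))$ as well. The twist moves the two blocks of $\rho\oplus\rho^{c,\vee}(1-2n)$ in opposite directions. Consequently the power sums $\sum_{i=1}^{2n}y_i^k$ and $\sum_{i=1}^{n}(y_i-1)^k+\sum_{i=1}^{n}(y_{i+n}+1)^k$ generate all $S_n\times S_n$-invariants, so the two subalgebras together generate $Z(\mf g)_L$.

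\textbf{Nilpotent ideal.} Your discussion here is unnecessary, and your fix would yield only the eigenspace version. Under decomposed genericity one has $\HH^i_\partial\hookrightarrow\HH^{i+1}_c$, and the determinant on $\wt{\T}^S_{\on{BM}}$ gives an honest Galois representation over $\T^S(K^p)_{\mf m}$. That is what the stated, non-specialized theorem requires.
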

See Theorem \ref{theorem: main theorem v2} for a precise statement and Section \ref{section: Setup}, following \eqref{eq: Scholze mod p Galois representation}, for definitions of non--Eisenstein and decomposed generic. In Remark \ref{remark: discussion on ass.} below, we briefly discuss the assumptions on $F$ and $\mf{m}$ that we impose.
\begin{remark}
    We confirm (cases of) Conjecture \ref{DPSconjecture} by specializing Theorem \ref{theorem: Main Theorem} with respect to any continuous character $x: \T^S(K^p)_{\mf{m}}[1/p]\rightarrow \overline{\Q}_p$ (see Remark \ref{Remark: actually proving the main theorem}).
\end{remark}
\begin{remark}
    In their follow-up paper \cite{DPS23}, Dospinescu--Pa\v{s}k\={u}nas--Schraen proved that if $V$ is an admissible Banach representation of $G(\Q_p)$ such that $V^{\la}$ admits an infinitesimal character, then the Gelfand--Kirillov dimension of $V$ is strictly less than $2\dim \on{Fl}_{G_{\ol{\Q}_p}}$, where $\on{Fl}_{G_{\ol{\Q}_p}}$ denotes the flag variety (see Theorem 6.1 of \textit{loc.\,cit.}). In our setting, we then get that if $G = \Res_{F/\Q}\GL_n$ and $V^i = \wt{\HH}_{\mf{m}}^i[\mf{m}_x]$ then this result and Theorem \ref{theorem: Main Theorem} imply that the Gelfand--Kirillov dimension of $V^i$ is at most $[F:\Q] n(n-1) - 1$, as a representation of $G(\Q_p) = \prod_{\nu\mid p}\GL_n(F_{\nu})$.
\end{remark}

\cite[Sections 9.6--9.10]{DPS25} previously proved Conjecture \ref{DPSconjecture} in some cases when $G$ is related to a Shimura variety, the cohomological degree $i$ is the middle degree $d$, and $\mf{m} \subset \T^S(K^p)$ is such that $\HH_{\mf{m}}^j = 0$ for $j \neq d$.
As mentioned in \cite[Remark 9.36]{DPS25}, the key assumption for their work to hold is that $\wt{\HH}_{i, \mf{m}}$, the completed homology group localized at $\mf{m}$, is a projective module over the Iwasawa algebra $L[[K_p]]:=L \otimes_{\mathbf Z_p}\mathbf Z_p[[K_p]]$ for $K_p \subset G(\Q_p)$ a compact open subgroup (in particular, the defect $l_0=0$).\footnote{The suggestion made in \cite[Remark 9.36]{DPS25} to use Theorems 8.5 and 8.6 in \textit{loc.\,cit.} on patched complexes for $\Res_{F/\mathbf{Q}}\on{PGL}_2$ relies on a still conjectural local--global compatibility \cite[Conjecture 5.1.12]{GeeNewton}. Our argument does not (directly) require Theorems 8.5 and 8.6 and works for $\Res_{F/\mathbf Q}\GL_n$ for all $n\geq 2$.} In particular, any setting where $G$ admits a generic torsion vanishing result \`{a} la Caraiani--Scholze will be amenable to the methods of \textit{loc.\,cit.} in many cases.
But crucially, this projectivity property does \textbf{not} hold for $G = \Res_{F/\Q}\GL_n$ in any degree (see the proof of \cite[Theorem 3.4]{CalEm09}). As such, our result seems to be the first of its kind when the defect $l_0$ is bigger than zero.

Even though the methods of \cite{DPS25} cannot be applied directly to our case, we are able to apply these methods indirectly, by first proving a similar statement for the completed cohomology of unitary Shimura varieties and then deducing the statement for $G=\Res_{F/\mathbf Q} \GL_n$. We note that the work of \cite{PasQua25}, which generalized\footnote{The construction from \cite{PasQua25} agrees with the construction from \cite{DPS25} whenever a Galois pseudocharacter arises from a Galois representation, even for general coefficients and not just $\overline{\Q}_p$. The authors of \textit{loc.\,cit.} will add this remark in a new version of their paper.}  the construction made in \cite{DPS25} from Galois representations to Galois pseudocharacters, was crucial for us to be able to carry out this argument. Let us now sketch our strategy for proving Theorem \ref{theorem: Main Theorem}.

\subsection{On the method of proof}
To work around the obstacles mentioned above (in particular $l_0>0$), we follow the lead of recent work on the Langlands program over CM fields and relate the completed cohomology groups $\wt{\HH}_{\mf{m}}^{i, \la}$ to the completed cohomology of a rank $2n$ unitary group $\wt{G}/F^+$, which contains $\Res_{F/F^+}\GL_n$ as a Levi subgroup. Such a strategy has been exploited previously to prove properties about automorphic Galois representations for $\GL_n/F$ (see among others \cite{HLTT16,Sch15, 10AuthorPaper, CN2023, AC24, Hevesi23}). Most such papers proceed by looking at cohomology with \textit{torsion} coefficients, with the exception of second-named author's previous work \cite{mcdonald2025eigenvarieties}. Roughly speaking, all of these previous works realize the cohomology of $\GL_n/F$ (in \textit{any degree}\footnote{For a general $\GL_n/F$ it is still not provably known for which degrees $\wt{\HH}_{\mf{m}}^i \neq 0$.}) as a subquotient of the cohomology of $\tG$ in middle degree, which generally is tightly connected to classical automorphic forms. This process is referred to as ``degree shifting'', and is often filled with technical obstacles.

In this work, we still prove that a certain middle degree cohomology of the unitary group $\wt{\HH}_{\tG}^d := \wt{\HH}^d(\wt{K}^p, L)_{\wt{i}(\wt{\mf{m}})}$ is tightly controlled by automorphic forms (see the proof of Theorem \ref{unitary inf action}), and thus has the infinitesimal action given by the desired generalized Hodge-Tate-Sen character. In this case, the completed homology is not projective over $L[[\wt{K}_p]]$, but is still \textit{torsion-free}, and thus we can run the machinery of \cite{DPS25}. Technically, the maximal ideal $\wt{\mf{m}}$ we work with is \textit{Eisenstein}, so we only have a Galois pseudocharacter attached to the localized Hecke algebra. For this reason, we must use the recent work of Pa\v{s}k\={u}nas--Quast \cite{PasQua25}, where they construct generalized Hodge-Tate-Sen characters $\wt{\zeta}^{C}$ for Galois pseudocharacters.

On the other hand, in our setting the degree shifting comes somewhat more easily, as Poincar\'{e} duality for completed (co)homology directly relates $\wt{\HH}_{\tG}^{d,\wt{G}-\on{la}}$ to $\wt{\HH}_c^{i+1}(\wt{K}^p, L)_{\wt{\mf{m}}}^{\wt{G}-\on{la}}$ for any $i \le d-1$, which by our assumptions on the maximal ideals $\wt{\mf{m}}$ is directly related to boundary cohomology in degree $i$. Then a suitable invariant subspace of the boundary cohomology in degree $i$ contains the desired $\wt{\HH}_{\mf{m}}^{i, \on{la}}$, and the information of the infinitesimal action on $\wt{\HH}_{\tG}^{d, \wt{G}-\on{la}}$ passes through these steps.

Two more steps after this are needed: the above process does not pin down the entire action of the center $Z(\Res_{F/\Q}\mf{gl}_n)_{\mathbf{Q}_p}$, but instead a subalgebra coming from the image of the unnormalized Harish-Chandra map (with respect to the opposite parabolic) $\HC^{\overline{P}}: Z(\Res_{F^+/\mathbf Q}\wt{\mf{g}})_{\mathbf{Q}_p} \to Z(\Res_{F/\Q}\mf{gl}_n)_{\mathbf Q_p}$ where $\wt{\mf{g}}:=\on{Lie}(\tG)$. First, we need to show that the constructions of the generalized Hodge-Tate-Sen characters $\wt{\zeta}^C$ for $\tG$ and $\zeta^C$ for $G$ are compatible under $\HC^{\overline{P}}$. Second, we need to address the problem of $\HC^{\overline{P}}$ not being surjective. We overcome this by twisting the maximal ideal $\mf{m}$ by a suitable character $\chi$, which also twists the image of the map $\HC^{\overline{P}}$, and we show that $\HC^{\overline{P}}$ and its twist actually generate all of $Z(\Res_{F/\Q}\mf{gl}_n)_{\mathbf Q_p}$. Intuitively, the cohomology of $\tG$ only knows about the Galois representation $\rho \oplus \rho^{c,\vee}(1-2n)$, but we want to pin down the weights of $\rho$. By also considering $\chi\rho \oplus \chi^{c,\vee}\rho^{c,\vee}(1-2n)$, the weights of $\rho$ are pinned down.

\begin{remark}\label{remark: discussion on ass.}
        Let us briefly comment on the assumptions that we make in Theorem \ref{theorem: Main Theorem}. 
    \begin{enumerate}[label=(\arabic*)]
        \item Just as in the paper \cite{10AuthorPaper} (among others), the assumption that \textbf{$F$ contains an imaginary quadratic field} is needed for some automorphic inputs from \cite{Shin14} (more precisely, the local--global compatibility at $p$ for classical points in completed cohomology for $\wt{G}$), so that the work is independent of the twisted weighted fundamental lemma. \textit{There is work in progress by Connor Halleck-Dub\'e on deducing the twisted weighted fundamental lemma from the results they prove in their PhD thesis; such a result could be used to remove the assumption that $F$ contains an imaginary quadratic field.}
        \item The assumption that \textbf{$p$ splits in $F_0$} implies that every place $\overline{v} \mid p$ of $F^+$, the maximal totally real subfield of $F$, splits in $F$. This is later used to note that the unitary group $\wt{G}$ is split after base change to $F^+_{\overline{v}}$ for all $\overline{v}\mid p$. Therefore, \textbf{we can replace this assumption with: every place $\overline{v} \mid p$ of $F^+$ splits in $F$}. 
        \item We use the assumption that \textbf{$\mf{m}$ is non--Eisenstein} to establish a clean relationship between $\wt{\HH}^{i, \on{la}}_{\mf{m}}$, the cohomology of interest, and the boundary cohomology $\wt{\HH}^i_{\partial}(\wt{K}^p,L)_{\wt{\mf{m}}}^{\wt{G}-\on{la}}$ (i.e. that the former is a subquotient of the latter).
        \item  The assumption that \textbf{$\mf{m}$ is decomposed generic} is only used for the vanishing results of Caraiani--Scholze \cite{CS24} (as generalized in \cite{KoshikawaGeneric} to remove the assumption that $F^+ \neq \Q$). These vanishing results are used in two ways. Firstly, we use them to compare $\wt{\HH}^{i+1}_c(\wt{K}^p,L)_{\wt{\mf{m}}}^{\tG-\on{la}}$ and $\wt{\HH}^{i}_{\partial}(\wt{K}^p,L)_{\wt{\mf{m}}}^{\tG-\on{la}}$, hence $\wt{\HH}^{i+1}_c(\wt{K}^p,L)_{\wt{\mf{m}}}^{\tG-\on{la}}$ and $\wt{\HH}^{i, \on{la}}_{\mf{m}}$ by the previous point. 

        Secondly, we use the vanishing results to prove that the infinitesimal action on $\wt{\HH}^{*}_c(\wt{K}^p,L)_{\wt{\mf{m}}}^{\tG-\on{la}}$ is given by a generalized Hodge-Tate-Sen character, as constructed in \cite{DPS25} (and generalized in \cite{PasQua25}). Here, we need a density statement concerning (suitable) locally algebraic vectors and the decomposed generic assumption on $\mf{m}$ provides this density, as under this assumption we can relate the locally analytic vectors of the localized completed cohomology to $\mathcal{C}^{\on{la}}(\wt{K}_p,L)^{\oplus m}$ for some integer $m$.
    \end{enumerate}
\end{remark}
\subsection{Overview of sections}
We review the contents briefly. In Section \ref{section: Setup} we collect preliminaries on Hecke algebras, completed cohomology of $\GL_n/F$ and $\tG$, and relate the infinitesimal action on the cohomology of $\tG$ to the action on the cohomology of $\GL_n/F$ via the Harish-Chandra map. In Section \ref{section: Infinitesimal action for unitary group} we prove Conjecture \ref{DPSconjecture} for the cohomology of $\tG$ in middle degree and (by Poincar\'{e} duality) for the compactly supported cohomology in \textit{all degrees}, when localized at a certain Eisenstein maximal ideal $\wt{\mf{m}}$, with the key inputs coming from the vanishing result of \cite{CS24} and the work \cite{PasQua25}. In Section \ref{section: Compatibility between rhoTilde and rho} we prove that the expected character $\zeta^C: Z(\on{Res}_{F/\Q}\mf{gl}_n)_{\mathbf{Q}_p} \to \T^S(K^p)^{\on{rig}}_{\mf{m}}$ for $\wt{\HH}^{\ast, \on{la}}_{\mf{m}}$ and (a version of) the character $\wt{\zeta}^C_{\on{BM}}: Z(\Res_{F^+/\mathbf Q}\wt{\mf{g}})_{\mathbf Q_p} \to \wt{\T}^S_{\on{BM}}(\wt{K}^p)^{\on{rig}}_{\wt{\mf{m}}}$ for $\wt{\HH}_c^{*}(\wt{K}^p, L)^{\wt{G}-\on{la}}_{\wt{\mf{m}}}$ are compatible under the Harish-Chandra map (see Theorem \ref{theorem: Harish Chandra + Inf Char compatibility}). Lastly, in Section \ref{section: Twisting and proof of the main theorem} we do the twisting argument to exhaust $Z(\on{Res}_{F/\Q}\mf{gl}_n)_{\mathbf Q_p}$ and finish the proof of Theorem \ref{theorem: Main Theorem}.

\subsection{Acknowledgments}
First, we heartily thank our advisors, Peter Scholze and Richard Taylor, for their continuous advice and support.
We thank Vytautas Pa\v{s}k\={u}nas for putting us in contact at the $p$-adic Langlands program conference in Luminy, which has led to the current work; we also thank the organizers of that conference. We are especially grateful to Lue Pan for an inspiring discussion, which motivated a simplification of the original intended strategy. Furthermore, we thank Lambert A'Campo, Arthur-C\'esar le Bras, Juan Esteban Rodr\'iguez Camargo, Toby Gee, Dongryul Kim, James Newton, and Vytautas Pa\v{s}k\={u}nas for helpful conversations on many different aspects related to this work. We thank Peter Scholze and Vytautas Pa\v{s}k\={u}nas for their comments on an earlier draft. The first--named author thanks the Max Planck Institute for Mathematics for its hospitality.

\subsection{Notations and conventions}
Let $p$ be a prime number, which we will fix throughout.
Let $F/\Q$ be a number field. We let $S_p(F)$ denote the set of places of $F$ dividing $p$. We let $F_p:= F\otimes_{\Q}\Q_p$, and $\mO_{F_{p}}:= \mO_F\otimes_{\Z}\Z_p$ with $\mO_F$ the ring of integers of $F$. Let $S$ be a finite set of finite places of $F$. We denote by $\Gal_{F,S}$ the Galois group of the maximal extension of $F$ which is unramified outside of $S$. If $F$ is an imaginary CM field with $F^+\subset F$ its maximal totally real subfield, we denote by $c \in \Gal(F/F^+)$ the complex conjugation. For each prime place $\nu$ of $F$, we let $\nu^c$ denote the place $c(\nu)$.

Let $G$ be a connected reductive group over $F$. Since the constructions that we will consider are functorial with respect to the coefficient field, we may work with a coefficient field $L/\mathbf Q_p$ which is both a finite extension and large enough so that there are $[F:\mathbf Q]$ embeddings $F \hookrightarrow L$ and so that the reductive group $G_L$ splits.\footnote{This will be useful for Theorem \ref{unitary inf action} and in Section \ref{section: Twisting and proof of the main theorem}.}

Let $S$ be a set of places of $F$ containing the infinite places. For $K^S \subset G(\A_F^{S})$ a compact open subgroup, $\mc{H}(G^S, K^S)$ denotes the set of compactly supported $K^S$-biinvariant functions $f: G(\A_F^{S}) \to \Z$. Moreover, for a general compact open subgroup $K \subset G(\A_F^{\infty})$, we set $K^S := \prod_{\nu \notin S}K_{\nu} \subset G(\A_F^{S})$, and $K_S := \prod_{\nu \in S\setminus S_{\infty}}K_{\nu} \subset \prod_{\nu \in S\setminus S_{\infty}}G(F_{\nu})$.

We shall use $\mf{g}$ to denote the Lie algebra of $G$ and denote by $Z(\mf{g}):= Z(U((\Res_{F/\mathbf Q}\mf{g})_{\mathbf Q_p}))$ the center of the universal enveloping algebra for $(\Res_{F/\mathbf Q}\mf{g})_{\Q_p}$. Similarly we shall write $U(\mf{g}):=U((\Res_{F/\mathbf Q}\mf{g})_{\mathbf Q_p})$.  Note that $(\Res_{F/\Q} \mf{g})_{\Q_p}= \oplus_{\nu \mid p} \Res_{F_\nu/\Q_p} \mf{g}_{F_\nu}$, $Z(\mf{g})= \otimes_{\nu \mid p} Z(\Res_{F_\nu/\Q_p} \mf{g}_{F_\nu})$ and similarly for $U(\mf{g})$. We refer the reader to \cite[Lemma 4.14]{DPS25} and discussions around it for results on the behaviour of these algebras under a change of the base field.

If $P \subset G$ is a parabolic subgroup of Levi decomposition $P = MN$, and $\mf{p} = \mf{m}\oplus \mf{n}$ is its Lie algebra, we denote by $\HC^P: Z(\mf{g}) \to Z(\mf{m})$ the \textit{unnormalized} Harish-Chandra homomorphism with respect to $\mf{p}$ (see \cite[\textsection 1.3]{Emerton_Jacquet_I}).

Let $K_p \subset G(F_p)$ be a compact open subgroup. We denote by $\mathbf Z_p[[K_p]]:=\varprojlim_{N \subset K_p} \mathbf Z_p[K_p/N]$ the Iwasawa algebra for $K_p$, the limit taken over normal open subgroups $N \subset K_p$, and denote by $L[[K_p]]:=L \otimes_{\mathbf Z_p} \mathbf Z_p[[K_p]]$ its rationalization.\footnote{We warn the reader about the abuse of notation here -- this is not the same as $\varprojlim_{N \subset K_p}  L[K_p/N]$.} We view $K_p$ as a locally $\mathbf Q_p$--analytic manifold and let $D(K_p) = D(K_p,L)$ denote the distribution algebra (see e.g. \cite[Section 2]{schneider2002locally}). For any locally $\Q_p$--analytic group $H$ we let $\mc{C}^{\rm{la}}(H, L) \subset \mc{C}^{\on{cts}}(H, L)$ be the spaces of locally analytic functions and continuous functions, respectively. For continuous functions, we may also take torsion or integral coefficients, or coefficients in an $L$-Banach space. For locally analytic functions, we may take coefficients in an LB space of compact type.
For $H\subset G$ locally $\Q_p$-analytic groups such that $G/H$ is compact and $(\pi,V)$ an $L$-Banach representation of $H$, we let $\on{cts}-\Ind_{H}^G\pi$ denote the continuous induction, which by definition is
$\on{cts}-\Ind_H^{G}(\pi):=\bigl\{f\in \mc{C}^{\on{cts}}(G, V) \ : \ f(hg)=h.f(g) \ \forall h \in H  \bigr\}$. We can make similar definitions for smooth induction and locally analytic induction, denoted by $\on{sm}-\Ind_H^G$ and $\on{la}-\Ind_H^G$, respectively. If $V$ is a locally convex topological $L$-vector space, we let $V'$ denote its strong dual.

We let $\chi_{\on{cyc}}: \Gal_F \to \Z_p^{\times}$ denote the $p$-adic cyclotomic character. In order to match \cite{DPS25} and \cite{PasQua25}, we adopt the convention that the Hodge-Tate-Sen weight $\on{wt}_{\nu, \tau}(\chi_{\on{cyc}}) = 1$ for all $\nu \mid p$ and $\tau: F_{\nu} \hookrightarrow L$. For each finite place $\nu$ of $F$ we let $\on{Frob}_{\nu}$ denote a \textit{geometric} Frobenius element of $\Gal_{F}$. We note that for all $\nu \nmid p$, $\chi_{\on{cyc}}(\on{Frob}_{\nu})=q_{\nu}^{-1}$ where $q_{\nu}$ is the size of the residue field corresponding to $\nu$. 

Let $K/\mathbf Q_p$ be a finite extension. We use the geometric normalization of class field theory so that the Artin map $\Art_K: K^{\times} \xrightarrow{\sim} W_K^{\rm{ab}}$ sends uniformizers to geometric Frobenius elements. We note that $\chi_{\on{cyc}}\circ \Art_K : K^{\times}\rightarrow \Z_p^{\times}$ sends $x$ to $N_{K/\Q_p}(x) \cdot |N_{K/\Q_p}(x)|_p $ where $N_{K/\Q_p}: K^{\times} \rightarrow \Q_p^{\times}$ is the norm map and $|\cdot |_p$ is the standard absolute value on $\Q_p$.

Let $R$ be a ring and $f(x) \in R[x]$ a polynomial of degree $n$ with an invertible constant coefficient $a_0 \in R^{\times}$. We denote by $f^{\vee}(X):=a_0^{-1}X^nf(X^{-1})$. We use $D(R)$ to denote the derived category of $R$--modules.
\newpage

\section{Setup}
\label{section: Setup}
Let $F$ be a CM field which contains an imaginary quadratic field $F_0$ in which $p$ splits. Let $F^+$ be the maximal totally real subfield of $F$. Let $\wt{G}/F^+$ be a quasi-split unitary group, $P=G \ltimes U$ the Siegel parabolic, $G\subset P$ the standard Levi subgroup and $U$ the unipotent radical (recall $G \simeq \Res_{F/F^+}\GL_n$ by \cite[Lemma 5.1]{NT16}). We also denote by $T$ and $B$ the subgroups of diagonal and upper triangular matrices of $\wt{G}$ as usual. See \cite[2.1.1 + 2.2.1]{10AuthorPaper} for precise details on these algebraic groups and locally symmetric spaces that we attach to them. As in \textit{loc.\,cit.} we use the notation $X_{\wt{K}}^{\wt{G}}$ and $X_K^G$ to denote the locally symmetric spaces for $\wt{G}$ and $G$ respectively with level defined by compact open subgroups $\wt{K}\subset \wt{G}(\A_{F^+}^{\infty})$ and $K\subset G(\A_{F^+}^{\infty})$. To simplify, we consider only \textit{good} subgroups $\wt{K}$, i.e. \textit{neat} compact open subgroups of the form $\prod_{\nu\nmid \infty}K_{\nu}$. Furthermore, for such $\wt{K}$, we let $K_P:=\wt{K} \cap P(\A_{F^+}^{\infty})$, $K_U:=\wt{K} \cap U(\A_{F^+}^{\infty})$ and $K_G:=\on{im}(K_P)$ under the natural projection $P(\A_{F^+}^\infty)\twoheadrightarrow G(\A_{F^+}^\infty)$. We say $\wt{K}$ is \textit{decomposed well} with respect to $P=GU$ if $K_P=K_G \ltimes K_U$; equivalently if $K_G=\wt{K} \cap G(\A_{F^+}^{\infty})$. From now on we shall consider only $\wt{K}$ which are decomposed well with respect to $P$ and we let $K=\wt{K}\cap G(\A_{F^+}^\infty)$.\footnote{Any good compact open $K \subset G(\A_{F^+}^{\infty})$ is of this form for $\wt{K}$ which is decomposed well with respect to $P$. To ensure neatness of $\wt{K}_{\ol{\nu}}$, note intersecting with inverse images of finite groups $P(\mO_{F^+}/\ell^n)$ preserves $K_{\ol{\nu}}$.} We shall also consider the opposite parabolic and the opposite unipotent subgroups $\overline{P}=G\ltimes\overline{U} \subset \wt{G}$.

Recall that the group scheme $\wt{G}$ is naturally defined over $\mO_{F^+}$ where all the subgroups introduced above are defined.
We shall denote by $\overline{U}_0:=\overline{U}({\mO_{F_p^+}})$.  
Let $d:=n^2[F^+:\mathbf Q]$ which also equals $\frac{1}{2}\dim_{\mathbf R} X^{\wt{G}}_{\wt{K}}=\dim_{\mathbf R}X^G_K+1$. 

The relationship between locally symmetric spaces for $X^G_{K}$ and $X^{\wt{G}}_{\wt{K}}$ appears through the boundary term $\partial X^{\wt{G}}_{\wt{K}}$. In particular it allows us to realize (completed) cohomology for $G$ as a piece of the (completed) cohomology associated to the boundary, which relates to a (completed) cohomology associated to $\wt{G}$ via the usual long exact sequence that we shall study in the next subsections. This will require localizing at certain maximal ideals for Hecke algebras that we now introduce first. 
\subsection{The big Hecke algebras}\label{subsection: big Hecke}
We first discuss Hecke algebras for $\tG$. Let $\wt{K}^p \subset \tG(\A_{F^+}^{p, \infty})$ be a compact open subgroup, and choose a compact open $K_0 \subset \tG(\mO_{F_p^+})$ such that $\wt{K}^pK_0$ is good and decomposed well with respect to $P$. 

Let $S$ be a finite set of places of $F$ containing all the infinite places, all primes above $p$, all primes where $F$ ramifies, and all primes where a fixed tame level $\wt{K}^p$ is not hyperspecial. Moreover, we assume that $S$ is stable under complex conjugation $c$, i.e. $S=S^c$. Let $\overline{S}$ denote the set of primes of $F^+$ that lie below the primes in $S$. Let $L$ be any finite extension of $\mathbf Q_p$, $\mO_L$ its valuation ring and $\varpi_L$ a uniformizer.

We have the abstract Hecke algebra\footnote{\label{footnote: functorial}This and the subsequent definitions of Hecke algebras depend on the extension $L/\Q_p$. We note that there are, functorial in $L$, natural maps between corresponding Hecke algebras respecting their actions on cohomologies. Since all the constructions we will consider will be naturally functorial in $L$, we will omit $L$ from the notation of the Hecke algebra and it should be clear from the context for which field they are defined.} (which is a commutative ring)
$$\wt{\T}^S_{\on{abs}} = \biggl(\bigotimes_{\ol{\nu} \notin \overline{S}} \mathcal{H}(\wt{G}(F^+_{\ol{\nu}}), \wt{K}_{\ol{\nu}}) \biggr) \otimes_{\mathbf Z} \mO_L
$$ 
For each finite place $\ol{\nu} \notin \overline{S}$ and a place $\nu \mid \ol{\nu}$, for $1\leq i \leq 2n$ we let $\wt{T}_{\nu,i} \in \mathcal{H}(\wt{G}(F^+_{\ol{\nu}}), \wt{K}_{\ol{\nu}}) \otimes_{\mathbf Z}\mathbf Z[q_{\nu}^{-1}]$ be the element $T_{G,\nu,i}$ defined in \cite[Proposition--Definition 5.2]{NT16}. We shall consider polynomials
\[
\wt{P}_{\nu}(X) = X^{2n}-\wt{T}_{\nu,1}X^{2n-1}+\ldots +(-1)^jq_{\nu}^{j(j-1)/2}\wt{T}_{\nu,j}X^{2n-j}+\ldots +
q_{\nu}^{n(2n-1)}\wt{T}_{\nu,2n}
\]
as in \cite[(5.2)]{NT16}. We consider the subalgebra $\wt{\T}^S \subset \wt{\T}^S_{\on{abs}}$ generated by all the coefficients of $\wt{P}_{\nu}(X)$ for all $\nu \mid \ol{\nu} \notin \overline{S}$ (note $q_{\nu}$ is invertible in $\mO_L$ for each $\nu \notin S$).

For each normal open subgroup $\wt{K}_p\subset K_0$ and $m \geq 1$, let us denote by
$$
\wt{\T}^S(\wt{K}_p \wt{K}^p, m):= \on{im}\biggl(\wt{\T}^S \rightarrow \on{End}_{D(\mO_L/\varpi_L^m[K_0/\wt{K}_p])}\bigl(C_{\bullet}(\wt{K}_p\wt{K}^p,m)\bigr)\biggr),
$$
where $C_{\bullet}(\wt{K}_p\wt{K}^p,m)$ is a complex calculating the homology for $X^{\wt{G}}_{\wt{K}_p\wt{K}^p}$ with coefficients $\mO_L/\varpi_L^m$. This is naturally a complex of right modules for $K_0/\wt{K}_p$ (see \cite[Section 2.1]{GeeNewton} or \cite[Section 6.2]{khare2017potential}).
In particular, these complexes come equipped with natural and explicit $\wt{\T}^S\times K_0/\wt{K}_p$--action. We have similar complexes for Borel--Moore and boundary homology as in \cite[Section 5]{caraiani2023vanishing}, and we denote by $\wt{\T}^S_{\on{BM}}(\wt{K}_p\wt{K}^p, m)$ and $\wt{\T}^S_{\partial}(\wt{K}_p\wt{K}^p,m)$ the corresponding images of the Hecke algebra.
\begin{remark}[Relationship between homology and cohomology]\label{coh vs hom}
There are similar complexes $C^{\bullet}(\wt{K}_p\wt{K}^p, m)$ calculating the cohomology (see for example \cite[Section 6.2]{khare2017potential}). As in \cite[Lemma 6.3]{khare2017potential} we have an equality (termwise as complexes)
\begin{equation}\label{eq: duality}
    C^{\bullet}(\wt{K}_p\wt{K}^p, m) = \on{Hom}_{\mO_L/\varpi_L^m}(C_{\bullet}(\wt{K}_p\wt{K}^p, m), \mO_L/\varpi_L^m)
\end{equation}
equivariant for the $\wt{\T}^S \times K_0/\wt{K}_p$--actions on both sides. Here $\wt{\T}^S$ is a commutative algebra and it acts on the RHS of (\ref{eq: duality}) by its natural action on $C_{\bullet}(\wt{K}_p\wt{K}^p, m)$ (which still produces a left action on the RHS). On the other hand, $K_0/\wt{K}_p$ acts on the RHS by acting on the right on $C_{\bullet}(\wt{K}_p\wt{K}^p, m)$.
For $\wt{K}_p$ sufficiently small, the complex $C_{\bullet}(\wt{K}_p\wt{K}^p, m)$ is a complex of free $\mO_L/\varpi_L^m$--modules, so (\ref{eq: duality}) can be seen as a quasi--isomorphism in $D(\mO_L/\varpi_L^m)$:
\begin{equation}
\label{eq: cohomology homology on derived level}
C^{\bullet}(\wt{K}_p\wt{K}^p, m) \simeq R\on{Hom}_{\mO_L/\varpi_L^m}(C_{\bullet}(\wt{K}_p\wt{K}^p, m), \mO_L/\varpi_L^m) \ .
\end{equation}
Furthermore, the complex $C_{\bullet}(\wt{K}_p\wt{K}^p, m)$ is perfect in $D(\mO_L/\varpi_L^m[K_0/\wt{K}_p])$. Using the perfect duality \eqref{eq: cohomology homology on derived level} together with  \cite[\href{https://stacks.math.columbia.edu/tag/07VI}{Tag 07VI}, \href{https://stacks.math.columbia.edu/tag/0A60}{Tag 0A60}]{stacks-project} and the equivariance of \eqref{eq: duality}, one checks that $\on{im}(\wt{\T}^S \rightarrow \on{End}_{D(\mathcal{O}_L/\varpi_L^m[K_0/\wt{K}_p])}(C^{\bullet}(\wt{K}_p\wt{K}^p, m)))$ is naturally identified with $\wt{\T}^S(\wt{K}_p\wt{K}^p,m)$. A similar statement is true for Borel--Moore homology and compactly supported cohomology (see \cite[Section 5]{caraiani2023vanishing}).
\end{remark}
Let us use $?$ to denote $\emptyset, \on{BM}$ or $\partial$. We recall that $\wt{\T}^S_{?}(\wt{K}_p\wt{K}^p,m)$ is a finite $\mathcal{O}_L/\varpi_L^m$--algebra which we equip with the discrete topology. The \textit{big Hecke algebra} is defined to be the ring
\begin{equation}
\label{equation: big Hecke algebra for tG}
    \wt{\T}^S_{?}(\wt{K}^p):=\varprojlim_{m,\wt{K}_p} \wt{\T}^S_{?}(\wt{K}_p\wt{K}^p,m)
\end{equation}
equipped with the projective limit topology. Here, the inverse limit maps come from the functorial maps
$$\on{End}_{D(\Lambda_2)}(C_{\bullet}^?(K_2\wt{K}^p,{m_2})) \rightarrow \on{End}_{D(\Lambda_1)}(C^{?}_{\bullet}(K_2\wt{K}^p,{m_2}) \otimes_{\Lambda_2}\Lambda_1)
$$
where $\Lambda_2=\mathcal{O}_L/\varpi_L^{m_2}[K_0/K_2]$, $\Lambda_1=\mathcal{O}_L/\varpi_L^{m_1}[K_0/K_1]$, $m_2 \geq m_1$ and $K_2 \subset K_1$, together with the natural isomorphism
$C^{?}_{\bullet}(K_2\wt{K}^p,{m_2}) \otimes_{\Lambda_2}\Lambda_1 \simeq C^{?}_{\bullet}(K_1\wt{K}^p,{m_1})$.

Next, we note that $\wt{\T}^S_{\rm{BM}}(\wt{K}^p), \ \wt{\T}^S(\wt{K}^p)$ and $ \wt{\T}^S_{\partial}(\wt{K}^p)$ act on completed compactly supported, usual, boundary cohomology, respectively, and similarly for homology (see the beginning of \ref{subsection: intro to completed} for a definition). Moreover, there are natural morphisms $\wt{\T}^S\rightarrow \wt{\T}^S_?(\wt{K}^p)$ and the long exact sequences \eqref{equation: ExcisionDistinguishedTriangleCompletedCohomology} considered in next subsection \ref{subsection: intro to completed} are equivariant for the $\wt{\T}^S$--action. 
Recall (see \cite[Lemma 2.1.14]{GeeNewton}) that the profinite $\mathcal{O}_L$--algebra $\wt{\T}^S_{?}(\wt{K}^p)$ is complete and semilocal. It has finitely many (open) maximal ideals $\mf{m}_1,\ldots,\mf{m}_r$ and $\wt{\T}^S_?(\wt{K}^p) = \prod_{i=1}^r \wt{\T}^S_?(\wt{K}^p)_{\mf{m}_i}$ with the localization $\wt{\T}^S_?(\wt{K}^p)_{\mf{m}_i}$ being an $\mf{m}_i$--adically complete and separated local ring with residue field a finite extension of $k=\mathcal{O}_L/\varpi_L$: these maximal ideals also come from a finite level Hecke algebra and so are related to maximal ideals of $\wt{\T}^S$. We record this relationship in the next lemma.
\begin{lemma}
    Suppose $\mf{m} \subset \wt{\T}^S_?(\wt{K}^p)$ is an open maximal ideal and let $\mf{m}' \subset \wt{\T}^S$ denote its preimage. Then 
    $$\wt{\HH}^*_?(\wt{K}^p, \mO_L)_{\mf{m}} = \varprojlim_n \wt{\HH}^*_{?}(\wt{K}^p, \mO_L/\varpi_L^n)_{\mf{m}'} \ .$$
    Furthermore, if $\mf{m}' \subset \wt{\T}^S$ is a maximal ideal such that for some $\wt{K}_p$ sufficiently small (contained in a pro-$p$ group) and some $n$ we have that $\wt{\HH}^*_?(\wt{K}_p\wt{K}^p, \mO_L/\varpi_L^n)_{\mf{m}'}\neq 0$, then there exists an open maximal ideal $\mf{m} \subset \wt{\T}^S_?(\wt{K}^p)$ such that the preimage of $\mf{m}$ is $\mf{m}'$.
\end{lemma}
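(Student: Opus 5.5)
We plan to argue at finite level and then pass to the limit, using the description of $\wt{\T}^S_?(\wt{K}^p)$ as an inverse limit of the finite algebras $\wt{\T}^S_?(\wt{K}_p\wt{K}^p,m)$ and the definition $\wt{\HH}^*_?(\wt{K}^p,\mO_L/\varpi_L^n)=\varinjlim_{\wt{K}_p}\HH^*_?(X^{\wt{G}}_{\wt{K}_p\wt{K}^p},\mO_L/\varpi_L^n)$, with $\wt{\HH}^*_?(\wt{K}^p,\mO_L)=\varprojlim_n \wt{\HH}^*_?(\wt{K}^p,\mO_L/\varpi_L^n)$.

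\emph{Finite-level algebra.} Each $A:=\wt{\T}^S_?(\wt{K}_p\wt{K}^p,m)$ is a finite $\mO_L/\varpi_L^m$-algebra, hence Artinian and a product of its localizations. The map $\wt{\T}^S\to A$ is surjective by definition, so maximal ideals of $A$ are exactly those $\mf{m}'\subset\wt{\T}^S$ containing the kernel, and $A_{\mf{m}'}=A\otimes_{\wt{\T}^S}\wt{\T}^S_{\mf{m}'}$. Any $\wt{\T}^S$-module on which $\wt{\T}^S$ acts through $A$, such as $\HH^*_?(X^{\wt{G}}_{\wt{K}_p\wt{K}^p},\mO_L/\varpi_L^m)$, then satisfies $M_{\mf{m}'}=e_{\mf{m}'}M$ for the corresponding idempotent $e_{\mf{m}'}\in A$; this uses Remark~\ref{coh vs hom} to know that the cohomology, and not only the homology, is acted on through $A$.

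\emph{First claim.} An open maximal ideal $\mf{m}\subset\wt{\T}^S_?(\wt{K}^p)$ is the pullback of a maximal ideal of some $A$, since opens of the inverse limit topology come from finite stages. Its preimage $\mf{m}'$ therefore corresponds, at every sufficiently deep stage, to the same idempotent $e_{\mf{m}}\in\wt{\T}^S_?(\wt{K}^p)$, and these idempotents are compatible in the limit. Localizing at $\mf{m}$ is multiplication by $e_{\mf{m}}$, which commutes with the direct limit over $\wt{K}_p$ and with $\varprojlim_n$. At each finite level, $e_{\mf{m}}$ acts as $e_{\mf{m}'}$, so $e_{\mf{m}}M=M_{\mf{m}'}$. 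For stages where $\mf{m}'$ does not arise, $M_{\mf{m}'}=0=e_{\mf{m}}M$. Localization is exact and commutes with filtered colimits, so
\[
\wt{\HH}^*_?(\wt{K}^p,\mO_L)_{\mf{m}}=e_{\mf{m}}\varprojlim_n\wt{\HH}^*_?(\wt{K}^p,\mO_L/\varpi_L^n)=\varprojlim_n\wt{\HH}^*_?(\wt{K}^p,\mO_L/\varpi_L^n)_{\mf{m}'}.
\]
The main obstacle is bookkeeping. We must check that the action of $\wt{\T}^S_?(\wt{K}^p)$ on each $\wt{\HH}^*_?(\wt{K}^p,\mO_L/\varpi_L^n)$ factors compatibly through the finite quotients; in particular each class lives at some finite level $\wt{K}_p$, where it is killed by the kernel to $A$. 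We must also check that the idempotents stabilise.

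\emph{Second claim.} Suppose $\HH^*_?(X_{\wt{K}_p\wt{K}^p},\mO_L/\varpi_L^n)_{\mf{m}'}\neq0$. Then $\mf{m}'$ is in the support of a module acted on faithfully-up-to-nilpotents through the endomorphism algebra in $D(\mO_L/\varpi_L^n[K_0/\wt{K}_p])$, so $\mf{m}'\supseteq\ker(\wt{\T}^S\to A)$ with $A=\wt{\T}^S_?(\wt{K}_p\wt{K}^p,n)$. A nonzero cohomology class is killed by the kernel of $\wt{\T}^S\to\End(C^\bullet)$, and Remark~\ref{coh vs hom} identifies that image with $A$. Thus $\mf{m}'$ gives a maximal ideal $\mf{m}_A$ of $A$. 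Since $\wt{\T}^S_?(\wt{K}^p)\to A$ is surjective (the image is compact and contains the dense image of $\wt{\T}^S$), the preimage $\mf{m}$ of $\mf{m}_A$ is an open maximal ideal whose preimage in $\wt{\T}^S$ is $\mf{m}'$. The pro-$p$ hypothesis on $\wt{K}_p$ is used so that $\mO_L/\varpi_L^n[K_0/\wt{K}_p]$-perfectness and the duality of Remark~\ref{coh vs hom} apply, with free complexes.
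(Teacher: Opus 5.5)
Your proposal is correct and follows the paper's proof. For the first claim, the paper argues exactly as you do: localization at an open maximal ideal is multiplication by an idempotent, this commutes with $\varinjlim_{\wt{K}_p}$ and $\varprojlim_n$, and the localizations at $\mf{m}$ and $\mf{m}'$ agree at each finite level. For the second claim the paper only refers to the proof of \cite[Lemma 2.1.14]{GeeNewton}; your direct argument is a fine way to spell this out. Nonvanishing forces $\mf{m}'\supseteq\ker(\wt{\T}^S\to \wt{\T}^S_?(\wt{K}_p\wt{K}^p,n))$, and you then pull back along the surjection $\wt{\T}^S_?(\wt{K}^p)\to\wt{\T}^S_?(\wt{K}_p\wt{K}^p,n)$. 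This tacitly takes $\wt{K}_p$ to be one of the normal open subgroups of $K_0$ indexing the limit.
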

\begin{proof}
    Let $\mf{m}_1, \ldots, \mf{m}_l$ be the open maximal ideals of $\wt{\T}^S_{?}(\wt{K}^p)$ and let $e_1,\ldots,e_l$ be idempotent elements corresponding to the localizations. Label the maximal ideals so that $\mf{m}_1 = \mf{m}$. Then it is not hard to see that for any $\wt{\T}^S_?(\wt{K}^p)$--module $V$, $V_{\mf{m}} \simeq e_1 \cdot V \subset V$ and that $e_1 \cdot V=\{v \in V \ : \ e_1\cdot v=v, \ e_i\cdot v=0  \ \ \ \ \forall i \neq 1 \}$. This condition is stable under limits, so $\wt{\HH}^*_?(\wt{K}^p, \mO_L)_{\mf{m}}= \varprojlim_n \wt{\HH}^*_?(\wt{K}^p, \mO_L/\varpi_L^n)_{\mf{m}}$. To finish the proof of the first claim, we note that $\wt{\HH}^*_?(\wt{K}^p, \mO_L/\varpi_L^n)_{\mf{m}}= \wt{\HH}^*_?(\wt{K}^p, \mO_L/\varpi_L^n)_{\mf{m}'}$ since two localizations agree when viewed as subalgebras of the endomorphism algebra of any (sufficiently small) finite level.
    The second statement follows as in the proof of \cite[Lemma 2.1.14]{GeeNewton}.
\end{proof}
\begin{remark}\label{remark: loc at max}
    We use the previous lemma as follows. Let $\wt{\mf{m}}\subset \wt{\T}^S_{\partial}(\wt{K}^p)$ be an open maximal ideal and, by abuse of notation, denote by $\wt{\mf{m}}$ its preimage in $\wt{\T}^S$. We may localize long exact sequences for completed cohomology with torsion coefficients coming from equation (\ref{equation: ExcisionDistinguishedTriangleCompletedCohomology}) in the next section at $\wt{\mf{m}}$ and take inverse limits. This stays exact since each term satisfies the Mittag--Leffler condition. Each non--zero term of the localized sequence $\varprojlim_n\wt{\HH}^i_{?}(\wt{K}^p, \mO_L/\varpi_L^n)_{\wt{\mf{m}}}$ actually equals $\wt{\HH}^i_{?}(\wt{K}^p, \mO_L)_{\wt{\mf{m}}'}$ for some $\wt{\mf{m}}' \subset \wt{\T}^S_?(\wt{K}^p)$ maximal open ideal, which we will also denote by $\wt{\mf{m}}$. Therefore it makes sense to say that we have a long--exact sequence coming from (\ref{equation: ExcisionDistinguishedTriangleCompletedCohomology}) for completed cohomology with $\mathcal{O}_L$ or $L$--coefficients and localized at $\wt{\mf{m}}$. On each non-zero term we have an action of the appropriate $\wt{\T}^S_?(\wt{K}^p)_{\wt{\mf{m}}}$.
\end{remark}
Let us also record that, by Poincar\'e duality that we shall recall in the next subsection, there is a natural morphism $\wt{i}: \wt{\T}^S(\wt{K}^p)\rightarrow \wt{\T}^S_{\rm{BM}}(\wt{K}^p)$ extending the morphism $\wt{i}: \wt{\T}^S \rightarrow \wt{\T}^S$ given on double cosets by $[\wt{K}_{\ol{\nu}}g\wt{K}_{\ol{\nu}}] \mapsto [\wt{K}_{\ol{\nu}}g^{-1}\wt{K}_{\ol{\nu}}]$. On the level of the polynomials that we are interested in, recall that $\wt{i}(\wt{P}_{\nu}(X))=\wt{P}_{\nu^c}(X)$ (see e.g. \cite[2.2.19]{10AuthorPaper}).

Now we address Hecke algebras for $G$. Let $\T^S:=\otimes'_{\nu \notin S} \mathcal{H}(\GL_n(F_{\nu}),\GL_n(\mathcal{O}_{F_{\nu}})) \otimes_{\mathbf Z} \mathcal{O}_L$, and note that this algebra has generators given by $T_{\nu,i}=[\GL_n(\mathcal{O}_{F_{\nu}})\on{diag}(\varpi_{\nu},\ldots,\varpi_{\nu},1,\ldots,1)\GL_n(\mathcal{O}_{F_{\nu}})]$ where $\varpi_{\nu}$ appears $i$ times and $i=0,\ldots,n$, together with $T_{\nu,n}^{-1}$, for all $\nu \notin S$. We define
\begin{equation}
\label{equation: big Hecke algebra for G}
\T^S(K^p):=\on{cl-im}\left(\T^S \rightarrow \prod_{i \geq 0,\ K_p,\ m} \End(\HH^i(X^G_{K_pK^p},\mathcal{O}_L/\varpi_{L}^m))\right),
\end{equation}
where the codomain is endowed with the profinite topology and $\on{cl-im}$ denotes the closure of the image. As for $\wt{\T}^S(\wt{K}^p)$ we note that $\T^S(K^p)$ is a profinite semilocal algebra with finitely many open maximal ideals, each of which comes from a finite level, and each localization is adically complete and separated.
For each $\nu \notin S$ the polynomial 
$$P_{\nu}(X):=X^n-T_{\nu,1}X^{n-1}+\cdots+ (-1)^{i}q_{\nu}^{i(i-1)/2}T_{\nu,i}X^{n-i}+\cdots+(-1)^nq_{\nu}^{n(n-1)/2}T_{\nu,n}
$$
will play an important role later.

The Hecke algebras for $\wt{G}$ and $G$ are related via the \textit{unnormalized Satake} morphism (see \cite[2.2.3-2.2.4]{NT16}),  which we denote by $\mc{S}: \wt{\T}^S \rightarrow \T^S$. Let $\mf{m}\subset \T^S(K^p)$ be an open maximal ideal. Then there is a Galois representation (cf. \cite[Theorem 1.0.3]{Sch15}) 
\begin{equation}
\label{eq: Scholze mod p Galois representation}
\ol{\rho}_{\mf{m}}:\Gal_{F,S}\rightarrow \GL_n(\T^S(K^p)/\mf{m}) \ .
\end{equation}
The maximal ideal $\mf{m}$ is called \textit{non--Eisenstein} if $\ol{\rho}_{\mf{m}}$ is absolutely irreducible. Moreover, we say $\mf{m}$ is \textit{decomposed generic} (in the sense of Caraiani--Scholze) if there is a prime $\ell$ which splits completely in $F$ and such that for all $\nu \mid \ell$ of $F$, $\ol{\rho}_{\mf{m}}|_{\Gal_{F_{\nu}}}$ is unramified with eigenvalues $\alpha_1,\dots, \alpha_n$ satisfying $\alpha_i/\alpha_j \neq \ell$ for all $i \neq j$. 

For the rest of this article we assume that $\mf{m}$ is non--Eisenstein and decomposed generic. Let $\wt{\mf{m}} \subset \wt{\T}^S$ be the inverse image of $\mf{m}$ under $\mc{S}$. By \cite[Theorem 2.4.4, Theorem 5.4.1]{10AuthorPaper}, $\wt{\mf{m}}$ corresponds to a maximal open ideal of $\wt{\T}^S_{\partial}(\wt{K}^p)$, which we also denote by $\wt{\mf{m}}$, and the Satake morphism descends to a morphism $\mc{S}: \wt{\T}^S_{\partial}(\wt{K}^p)_{\wt{\mf{m}}} \rightarrow \T^S(K^p)_{\mf{m}}$. 

We now argue that under our assumptions on $\mf{m}$, the Satake morphism also descends to a morphism $\mc{S}: \wt{\T}^S_{\rm{BM}}(\wt{K}^p)_{\wt{\mf{m}}} \rightarrow \T^S(K^p)_{\mf{m}}$. Indeed, let $\wt{K}=\wt{K}_p\wt{K}^p$ which decomposes well with respect to $P$ and let $K=\wt{K}\cap G(\mathbf A_{F^+}^{\infty})$. Note  $\HH^i(X_K^G, \mathcal{O}_L/\varpi_{L}^m)=0$ for all $i> d-1$.\footnote{Note $\dim X_K^G=d-1$.} For $i\leq d-1$, we have a $\wt{\T}^S_{\wt{\mf{m}}}$--equivariant embedding $\HH^i_{\partial}(X_{\wt{K}}^{\wt{G}}, \mathcal{O}_L/\varpi_{L}^m)_{\wt{\mf{m}}}\hookrightarrow \HH^{i+1}_c(X_{\wt{K}}^{\wt{G}}, \mathcal{O}_L/\varpi_{L}^m)_{\wt{\mf{m}}}$ since $\HH^j(X_{\wt{K}}^{\wt{G}}, \mathcal{O}_L/\varpi_{L}^m)_{\wt{\mf{m}}}=0$ for $j\leq d-1$ by \cite[Theorem 1.1]{CS24} and \cite[Theorem 1.3]{KoshikawaGeneric}. By \cite[proof of Theorem 2.4.4]{10AuthorPaper}, we have a natural map extending $\mc{S}$:
\begin{equation}
\label{eq: 10AP 2.4.4 on cohomology in degree i}    
\on{im}(\wt{\T}^S_{\wt{\mf{m}}} \rightarrow \End(\HH^i_{\partial}(X_{\wt{K}}^{\wt{G}}, \mO_L/\varpi_{L}^m)_{\wt{\mf{m}}})) \rightarrow \on{im}(\T^S_{\mf{m}} \rightarrow \End(\HH^i(X_{K}^{G}, \mO_L/\varpi_{L}^m)_{\mf{m}})) \ .
\end{equation}
Doing this for all $\wt{K}$ and $m$, and restricting $\wt{\T}^S_{\rm{BM}}(\wt{K}^p)_{\wt{\mf{m}}}$ to its actions on the respective cohomology groups, we get a natural, continuous by construction, map $\mc{S}: \wt{\T}^S_{\rm{BM}}(\wt{K}^p)_{\wt{\mf{m}}}\rightarrow \T^S(K^p)_{\mf{m}}$ extending $\mc{S}:\wt{\T}^S\rightarrow \T^S$.
In the next subsections we will relate the locally analytic vectors of compactly supported completed cohomology localized at $\wt{\mf{m}}$ for $\wt{G}$ to locally analytic vectors of completed cohomology localized at $\mf{m}$ for $G$, and $\mc{S}$ is precisely the relation between the two Hecke actions.

Finally, for later use, we recall from \cite[Proposition--Definition 5.3]{NT16} that for all $\nu \notin S$
$$\mc{S}(\wt{P}_{\nu}(X))=P_{\nu}(X)q_{\nu}^{n(2n-1)}P_{\nu^c}^{\vee}(q_{\nu}^{1-2n}X) \ .
$$

\begin{remark}
    We briefly comment on our choices of Hecke algebras for $\wt{G}$ and $G$. Firstly, we have chosen to use a ``derived'' version of the Hecke algebras for $\wt{G}$ since these are more compatible with (Poincar\'e) dualities between different types of cohomology/homology complexes. On the other hand, one might wonder why we use \eqref{equation: big Hecke algebra for G} for the definition of $\T^S(K^p)$, instead of the more derived definition as in \eqref{equation: big Hecke algebra for tG}. The main reason is that with \eqref{equation: big Hecke algebra for G} it is easier to show that $\mc{S}$ descends to a morphism between Hecke algebras, and it also does not affect the application to the main result, since the action of any choice of the Hecke algebra would factor through \eqref{equation: big Hecke algebra for G} anyway. A very similar choice of Hecke algebra is also employed by \cite{DPS25} and \cite{PasQua25}.
\end{remark}
\subsection{Completed cohomology and locally analytic vectors}
\label{subsection: intro to completed}
We are interested in Hecke eigensystems appearing in completed cohomology groups. We now collect some general preliminaries about completed cohomology. Although the results in this subsection are stated for $\tG$, they all hold for general connected reductive groups over any number field. As before, our coefficients come from a finite extension $L/\Q_p$, with valuation ring $\mathcal{O}_L$ and uniformizer $\varpi_L$. Let us denote by 
\begin{equation}
\label{equation: Definition of completed cohomology}
R\Gamma(\wt{K}^p, \mO_L):=\varprojlim_n \varinjlim_{\wt{K}'_p} R\Gamma(X^{\wt{G}}_{\wt{K}'_p\wt{K}^p}, \mO_L/\varpi_L^n) = \varprojlim_n R\Gamma(\wt{K}^p, \mO_L/\varpi_L^n),
\end{equation}
and let $R\Gamma(\wt{K}^p, L):=R\Gamma(\wt{K}^p, \mO_L) \otimes_{\mO_L} L$. We define complexes for boundary and compactly supported cohomology similarly. We denote the cohomology of these complexes by $\wt{\HH}^i_?(\wt{K}^p, L):=  \HH^i(R\Gamma_?(\wt{K}^p, L))$ (similarly for $\mO_L$ and $\mO_L/\varpi_L^n$ coefficients). We use similar notation for the completed cohomology for $G$ instead of $\wt{G}$. We note that these complexes come equipped with an action of $\wt{\T}^S_?(\wt{K}^p) \times \wt{G}(F^+_p)$ (or $\T^S(K^p)\times \GL_n(F_p)$ for $G$).  We use similar definitions and notations for completed (usual and Borel--Moore) homology.

For coefficients $\Lambda = \mO_L/\varpi_L^n, \mO_L, L$, the complex $R\Gamma(\wt{K}^p, \Lambda)$ is related to complexes $R\Gamma_c(\wt{K}^p, \Lambda)$ and $R\Gamma_{\partial}(\wt{K}^p,\Lambda)$ via the usual $\wt{\T}^S\times \wt{G}(F^+_p)$-equivariant distinguished triangle
\begin{equation}
\label{equation: ExcisionDistinguishedTriangleCompletedCohomology}
 R\Gamma_c(\wt{K}^p, \Lambda) \rightarrow R\Gamma(\wt{K}^p, \Lambda) \rightarrow R\Gamma_\partial(\wt{K}^p, \Lambda) \xrightarrow{[1]}
\end{equation}
As in the previous subsection, let $\wt{i}: \wt{\T}^S \rightarrow \wt{\T}^S$ be given by sending cosets $[\wt{K}^Sg\wt{K}^S]\rightarrow [\wt{K}^Sg^{-1}\wt{K}^S]$. We now recall various (Poincar\'e) duality statements. 
\begin{proposition}[Poincar\'e and homology--cohomology dualities]\label{dualities}
    \begin{enumerate}[label=(\arabic*)]
        \item Let $R$ be a Noetherian commutative ring and $A$ a module for $R[\wt{K}_S]$ which is finite free as an $R$--module and let $B:=\Hom_R(A,R)$. There is a natural homotopy equivalence
        \[
    R\Hom_R(R\Gamma_c(X^{\wt{G}}_{\wt{K}}, \underline{A}),R) \simeq R\Gamma(X^{\wt{G}}_{\wt{K}}, \underline{B})[2d]
        \]
        so that $\wt{\T}^S$-action on the LHS corresponds to the $\wt{\T}^S$-action on the RHS pre--composed with $\wt{i}$.
        \item Let $\wt{\HH}_i:= \wt{\HH}_i(\wt{K}^p,L)$ and $\wt{\HH}_i^{\rm{BM}} := \wt{\HH}_i^{\rm{BM}}(\wt{K}^p,L)$ denote completed homology and completed Borel--Moore homology, respectively. Abbreviate cohomology groups similarly. Then there are natural $\wt{\T}^S_?(\wt{K}^p) \times \wt{G}(F_p^+)$--equivariant isomorphisms (here $\wt{G}(F_p^+)$ acts naturally on the right on homology, and on the left on cohomology)
        $$\Hom^{cts}_L(\wt{\HH}^i, L) \cong \wt{\HH}_i, \ \ \Hom_L^{cts}(\wt{\HH}^i_c, L) \cong \wt{\HH}_i^{\rm{BM}}
        $$
        Furthermore $\wt{\HH}^i$ and $\wt{\HH}^i_c$ are admissible Banach representations of $\wt{G}(F_p^+)$ and $\wt{\HH}_i$ and $\wt{\HH}_i^{\rm{BM}}$ are finitely--generated (naturally right) $L[[\wt{K}_p]]$--modules.
        \item We have a Poincar\'e duality on the level of completed homology given by a spectral sequence
        $$E_2^{i,j}:= \on{Ext}_{L[[\wt{K}_p]]}^i(\wt{\HH}_j, L[[\wt{K}_p]]) \implies \wt{\HH}^{\rm{BM}}_{2d-i-j}
        $$
        which is equivariant for $\wt{\T}^S\times L[[\wt{K}_p]]$--action as follows: the action of $\wt{\T}^S$ on the LHS equals the action on the RHS pre--composed with $\wt{i}$, as usual. The right action of $L[[\wt{K}_p]]$ on the RHS equals the right action of $L[[\wt{K}_p]]$ on the LHS coming from the right action on the Ext--factor $L[[\wt{K}_p]]$ (here $\on{Ext}$ is taken with respect to left actions, and $\wt{\HH}_j$ is viewed as a left module by inverting the right action). 
    \end{enumerate}
\end{proposition}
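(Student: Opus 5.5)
The three parts are largely standard, so the plan is to reduce each to known results and check the Hecke and group equivariances carefully.

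For (1), start from Poincar\'e--Verdier duality on the locally symmetric space $X^{\wt{G}}_{\wt{K}}$. Since $\wt{K}$ is neat, this space is an orientable manifold of real dimension $2d$, and $\underline{A}$ is a local system of finite free $R$-modules. Verdier duality gives $R\Hom_R(R\Gamma_c(X,\underline{A}),R)\simeq R\Gamma(X,\mathbb{D}\underline{A})$ with $\mathbb{D}\underline{A}=\underline{B}[2d]$; one may also work with the Borel--Serre compactification, a compact manifold with boundary, and use Lefschetz duality. To realize this as a homotopy equivalence of explicit complexes, I would use the finite simplicial or cellular models of \cite[Section 6.2]{khare2017potential} and \cite[Section 5]{caraiani2023vanishing}. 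For the Hecke compatibility, write a double coset operator $[\wt{K}g\wt{K}]$ as the correspondence $X_{\wt{K}}\leftarrow X_{\wt{K}\cap g\wt{K}g^{-1}}\rightarrow X_{\wt{K}}$, i.e. as a pullback followed by a trace along finite covers. Under duality, pullback and trace are swapped, so the transpose of $[\wt{K}g\wt{K}]$ is the correspondence read backwards, which is $[\wt{K}g^{-1}\wt{K}]=\wt{i}([\wt{K}g\wt{K}])$.

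For (2), dualize the identity \eqref{eq: duality} at each finite level and torsion coefficient, then pass to limits. Completed cohomology with $\mO_L/\varpi_L^m$ coefficients is the colimit over $\wt{K}_p$ of the finite-level groups, and completed homology is the corresponding limit. Pontryagin duality exchanges these, using that the relevant systems are Mittag--Leffler because the terms are finite. Taking the limit over $m$ and inverting $p$ then gives $\Hom^{cts}_L(\wt{\HH}^i,L)\cong\wt{\HH}_i$; the same argument with compact supports and Borel--Moore homology gives the second isomorphism. Equivariance is inherited from the equivariance of \eqref{eq: duality} together with the identification of the Hecke algebras in Remark \ref{coh vs hom}. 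For finite generation, note that the homology complex $\varprojlim C_\bullet$ is a perfect complex over $\mO_L[[\wt{K}_p]]$, being the limit of perfect $\mO_L/\varpi_L^m[K_0/\wt{K}_p]$-complexes compatible under base change. Since $\mO_L[[\wt{K}_p]]$ is Noetherian, its homology is finitely generated, and admissibility of the cohomology is exactly the dual statement by Schneider--Teitelbaum duality.

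For (3), combine (1) at finite levels with this perfectness. Let $C_\bullet$ be the perfect $\mO_L[[\wt{K}_p]]$-complex computing completed homology and $C^{\on{BM}}_\bullet$ the Borel--Moore one. Applying (1) with $R=\mO_L/\varpi_L^m$ and $A=R[K_0/\wt{K}_p]$, and using $\Hom_R(R[H],R)\cong R[H]$ for finite $H$, gives $R\Hom_{\Lambda}(C_\bullet\otimes\Lambda,\Lambda)\simeq C^{\on{BM}}_\bullet\otimes\Lambda[-2d]$ with $\Lambda=\mO_L/\varpi_L^m[K_0/\wt{K}_p]$. Passing to the limit and using perfectness to commute $R\Hom$ with base change yields $R\Hom_{\mO_L[[\wt{K}_p]]}(C_\bullet,\mO_L[[\wt{K}_p]])\simeq C^{\on{BM}}_\bullet[-2d]$. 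Inverting $p$, the hypercohomology spectral sequence for $R\Hom(C_\bullet,-)$ then produces the stated $E_2$ page and abutment. Here $\wt{\T}^S$ acts through $\wt{i}$ by (1), and the residual right module structure comes from the bimodule $L[[\wt{K}_p]]$.

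I expect the main obstacle to be the bookkeeping of left and right actions and the compatibility of the finite-level dualities under change of level, i.e. checking that the isomorphisms in (1) commute with the transition maps (trace and restriction). I would handle this via the explicit models of \cite{khare2017potential} and \cite{caraiani2023vanishing}, or cite \cite[Section 2]{GeeNewton}, where this limit argument is carried out.
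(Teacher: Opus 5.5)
Your proposal is correct and follows essentially the same route as the paper, whose proof simply cites these inputs: \cite[Corollary 5.3.2]{caraiani2023vanishing} for (1); \cite[Theorem 1.1 (3)]{CalEmSurvey} tensored with $L$, plus Emerton's admissibility theorem, for (2), with finite generation of homology then obtained by dualizing (the reverse of your order); and \cite[Theorem 5.1.6, Lemma 5.3.1, Corollary 5.3.2]{caraiani2023vanishing} for (3). The one real difference is in (3), where the paper takes the chain-level duality directly with $L[[\wt{K}_p]]$-coefficients on the adelic complexes of \cite{caraiani2023vanishing}; this avoids your passage through finite levels and limits, and hence the change-of-level compatibility you identify as the main obstacle.
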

\begin{proof}
    For the first part see \cite[Corollary 5.3.2 + (5.1.3)]{caraiani2023vanishing}. For the second part, the first statements follow from tensoring up to $L$ the statements of \cite[Theorem 1.1 (3)]{CalEmSurvey}. The admissibility comes from \cite[Theorem 2.2.11]{emerton2006interpolation}, and the statements about homology follow from dualizing. The third part follows from \cite[Theorem 5.1.6, Lemma 5.3.1, Corollary 5.3.2]{caraiani2023vanishing}. Indeed, we have that  $$C_{\mathbf A, \bullet }^{\rm{BM}}(\wt{K}, L[[\wt{K}_p]])[2d] \simeq \Hom_{L[[\wt{K}_p]]}(F_{\bullet} \otimes_{\mathbf Z[\wt{K}]} L[[\wt{K}_p]], L[[\wt{K}_p]])\simeq\Hom_{L[[\wt{K}_p]]}(C_{\mathbf A, \bullet}(\wt{K}, L[[\wt{K}_p]]), L[[\wt{K}_p]]),$$
    where $L[[\wt{K}_p]]$ is viewed as a left/right module naturally over itself and both $C_{\mathbf{A}, \bullet}^{?}(\wt{K}, -)$  and $F_{\bullet}$ are as in \textit{loc.\,cit.} (see also \cite[Proposition 6.3.5]{JNWE24} for compatibility of actions). 
\end{proof}
\begin{remark}
    The Poincar\'e duality at each finite level with torsion coefficients yields an extension of a morphism $\wt{i}: \wt{\T}^S \rightarrow \wt{\T}^S$ to a continuous morphism $\wt{i}: \wt{\T}^S(\wt{K}^p) \rightarrow \wt{\T}^S_{\on{BM}}(\wt{K}^p)$. Furthermore, the Hecke-equivariance in Proposition \ref{dualities} (3) extends to the compatibility of actions of $\wt{\T}^S(\wt{K}^p)$ on the LHS and $\wt{\T}^S_{\on{BM}}(\wt{K}^p)$ on the RHS given by the map $\wt{i}: \wt{\T}^S(\wt{K}^p) \rightarrow \wt{\T}^S_{\on{BM}}(\wt{K}^p)$.
\end{remark}
The key for us is that we can use Poincar\'{e} duality on the dual of locally analytic completed cohomology, just by tensoring with distribution algebras. First, for $V$ an $L$-Banach representation of $\tG(F_p^+)$, we let $V^{\tG-\la}$ denote its locally analytic vectors. Similarly, for $G(F_p^+)$-representations $W$ we will use the notation $W^{G-\la}$, to distinguish between the two. When there is no risk of confusion, we simply use the notation $(-)^{\on{la}}$.

\begin{proposition}\label{prop:poincare for locally analytic}
    \begin{enumerate}[label=(\arabic*)]
        \item $(\wt{\HH}_{\wt{\mf{m}}}^{i,\wt{G}-\on{la}})' \simeq (\wt{\HH}_i)_{\wt{\mf{m}}}\otimes_{L[[\wt{K}_p]]}D(\wt{K}_p)$ for any $\wt{\mf{m}}\subset \wt{\T}^S(\wt{K}^p)$ an open maximal ideal. Similar statements hold for $\wt{\HH}^i_c$, $\wt{\HH}_i^{\rm{BM}}$ and any maximal ideal $\wt{\mf{m}}\subset \wt{\T}^S_{\on{BM}}(\wt{K}^p)$.
        \item  There is also a Poincar\'{e} duality spectral sequence
        \[
        E_2^{i, j} = \Ext_{D(\wt{K}_p)}^i(\wt{\HH}_j\otimes_{L[[\wt{K}_p]]}D(\wt{K}_p), D(\wt{K}_p)) \implies \wt{\HH}_{2d-i-j}^{\rm{BM}} \otimes_{L[[\wt{K}_p]]} D(\wt{K}_p),
        \]
       with similar equivariance properties as Proposition \ref{dualities} (3).
    \end{enumerate}
\end{proposition}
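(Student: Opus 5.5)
For (1), I would begin with the Schneider--Teitelbaum duality for admissible representations. By Proposition \ref{dualities} (2), $\wt{\HH}^i$ is an admissible Banach representation of $\wt{G}(F^+_p)$, and its continuous dual is the finitely generated $L[[\wt{K}_p]]$--module $\wt{\HH}_i$. Localization at $\wt{\mf{m}}$ amounts to applying the idempotent $e_{\wt{\mf{m}}}\in\wt{\T}^S(\wt{K}^p)$. This idempotent commutes with the $\wt{G}(F^+_p)$--action, so $\wt{\HH}^i_{\wt{\mf{m}}}$ is a direct summand of $\wt{\HH}^i$ and is again admissible Banach. Its dual is $e_{\wt{\mf{m}}}\wt{\HH}_i=(\wt{\HH}_i)_{\wt{\mf{m}}}$, by dualizing the idempotent decomposition and using the equivariance in Proposition \ref{dualities} (2).

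Next I would invoke Schneider--Teitelbaum. For an admissible Banach representation $V$ of $\wt{K}_p$ with dual $M=V'$, a finitely generated $L[[\wt{K}_p]]$--module, the space $V^{\la}$ is an admissible locally analytic representation and $(V^{\la})'\simeq D(\wt{K}_p)\otimes_{L[[\wt{K}_p]]}M$. Here one uses that $D(\wt{K}_p)$ is flat over $L[[\wt{K}_p]]$. Applying this with $V=\wt{\HH}^i_{\wt{\mf{m}}}$ gives (1). I would check that the left/right conventions match Proposition \ref{dualities}, and that taking locally analytic vectors commutes with $e_{\wt{\mf{m}}}$. The latter holds because $e_{\wt{\mf{m}}}$ is a continuous $\wt{G}(F_p^+)$--equivariant projector. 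The same argument applies verbatim to $\wt{\HH}^i_c$ and $\wt{\HH}_i^{\rm BM}$, with $\wt{\T}^S_{\rm BM}(\wt{K}^p)$ in place of $\wt{\T}^S(\wt{K}^p)$.

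For (2), I would base change the Poincaré duality of Proposition \ref{dualities} (3) along the flat map $L[[\wt{K}_p]]\to D(\wt{K}_p)$. At the level of complexes, the proof of Proposition \ref{dualities} (3) gives a quasi-isomorphism
$C^{\rm BM}_\bullet[2d]\simeq \Hom_{L[[\wt{K}_p]]}(C_\bullet,L[[\wt{K}_p]])$,
where $C_\bullet$ is a bounded complex of finite free $L[[\wt{K}_p]]$--modules. Applying $-\otimes_{L[[\wt{K}_p]]}D(\wt{K}_p)$, and using that $\Hom$ out of a finite free module commutes with this base change, yields
$C^{\rm BM}_\bullet\otimes D(\wt{K}_p)[2d]\simeq\Hom_{D(\wt{K}_p)}(C_\bullet\otimes D(\wt{K}_p),D(\wt{K}_p))$.

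By flatness, the homology of $C_\bullet\otimes D(\wt{K}_p)$ is $\wt{\HH}_j\otimes D(\wt{K}_p)$. The hypercohomology spectral sequence for $R\Hom_{D(\wt{K}_p)}(-,D(\wt{K}_p))$ applied to this complex then produces the claimed $E_2$ page abutting to $\wt{\HH}^{\rm BM}_{2d-i-j}\otimes D(\wt{K}_p)$. Equivalently, at the level of $E_2$ terms, one has $\Ext^i_{D}(M\otimes D,D)\simeq \Ext^i_{L[[\wt{K}_p]]}(M,L[[\wt{K}_p]])\otimes D$ for finitely generated $M$ over the Noetherian ring $L[[\wt{K}_p]]$, computed via finite free resolutions. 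The $\wt{\T}^S$--equivariance, twisted by $\wt{i}$, and the $D(\wt{K}_p)$--equivariance are inherited because all maps are base changes of equivariant ones.

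The main obstacle is the flatness of $D(\wt{K}_p)$ over $L[[\wt{K}_p]]$ together with its compatibility with Hom. Flatness is a theorem of Schneider--Teitelbaum, proved via Fréchet–Stein structures and valid for $\wt{K}_p$ compact $p$-adic analytic. I would cite it rather than reprove it, after shrinking $\wt{K}_p$ to be uniform if necessary, which changes nothing since both sides are compatible with restriction to open subgroups.
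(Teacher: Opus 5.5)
Your proposal is correct and follows the paper's own route. For (1), the paper simply cites Schneider--Teitelbaum's duality $(V^{\la})'\simeq D(\wt{K}_p)\otimes_{L[[\wt{K}_p]]}V'$ for admissible Banach $V$; your idempotent argument just spells out the localization step. For (2), the paper base-changes the Poincar\'e duality of Proposition \ref{dualities}(3) along the faithfully flat map $L[[\wt{K}_p]]\to D(\wt{K}_p)$, using Noetherianity and finite presentation to identify $\Ext^i_{L[[\wt{K}_p]]}(\wt{\HH}_j,L[[\wt{K}_p]])\otimes D(\wt{K}_p)$ with $\Ext^i_{D(\wt{K}_p)}(\wt{\HH}_j\otimes D(\wt{K}_p),D(\wt{K}_p))$, and your complex-level formulation is only a more explicit version of that same argument.
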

\begin{proof}
    The first part is \cite[Theorem 7.1]{ST03} (here we view homologies as right $D(\wt{K}_p)$--modules and in \textit{loc.\,cit.} they view them as left modules using the anti--involution $D(\wt{K}_p) \rightarrow D(\wt{K}_p)$ induced by $g \mapsto g^{-1}$: note this extends the involution $\iota: Z(\wt{\mf{g}})\rightarrow Z(\wt{\mf{g}}) $, with natural inclusions $Z(\wt{\mf{g}})\subset U(\wt{\mf{g}}) \subset D(\wt{K}_p)$ as explained in \cite[p. 9-12]{schneider2002locally}).

    The second part follows from Proposition \ref{dualities} together with the facts that $L[[\wt{K}_p]] \hookrightarrow D(\wt{K}_p)$ is faithfully flat (see \cite[Theorem 5.2]{ST03}), $L[[\wt{K}_p]]$ is Noetherian, and $\wt{\HH}_j$ and $L[[\wt{K}_p]]$ are finitely presented $L[[\wt{K}_p]]$--modules, and thus the natural right $D(\wt{K}_p)$--module map $$\Ext_{L[[\wt{K}_p]]}^i(\wt{\HH}_j, L[[\wt{K}_p]])\otimes_{L[[\wt{K}_p]]}D(\wt{K}_p) \to \Ext_{D(\wt{K}_p)}^i(\wt{\HH}_j\otimes_{L[[\wt{K}_p]]}D(\wt{K}_p), D(\wt{K}_p))$$
    is an isomorphism.
\end{proof}
In the next subsection, we relate the locally analytic vectors of compactly supported completed cohomology for $\wt{G}$ (localized suitably) to the locally analytic vectors of completed cohomology for $G$ (localized suitably) and keep track of how the corresponding infinitesimal actions of $Z(\wt{\mf{g}})$ and $Z(\mf{g})$ relate.
\subsection{Completed cohomology: from unitary group to $\GL_n$}
\label{section: facts about completed cohomology of unitary groups}
We now recall some facts we need that are specific to the unitary group $\tG$. Let $\mf{m}\subset \T^S(K^p)$ be a non--Eisenstein, decomposed generic maximal ideal and let $\wt{\mf{m}}:=\mc{S}^{\ast}(\mf{m})\subset \wt{\T}^S$ be the corresponding maximal Eisenstein ideal. We localize (\ref{equation: ExcisionDistinguishedTriangleCompletedCohomology}) with respect to $\wt{\mf{m}}$ and record the following facts from the literature. Let $L/\mathbf Q_p$ be a finite extension as usual.
\begin{theorem}\label{vanishing}
    \begin{enumerate}[label=(\arabic*)]
        \item If $\wt{\HH}^i(\wt{K}^p,L)_{\wt{\mf{m}}}\neq 0$ then $i=d$; the same is true after localizing at $\wt{i}(\wt{\mf{m}})$.
        \item Let $H\subset \tG(F_p^+)$ be a compact open pro-$p$ subgroup. Then, $R\Gamma_c(\wt{K}^p, L)_{\wt{\mf{m}}}$ is quasi--isomorphic to a complex of injective admissible $H$-representations concentrated in degrees $[0,d]$ -- furthermore, each term is of the form $\mc{C}^{\rm{cts}}(H,L)^{\oplus m}$ for some $m$; the same is true when localized at $\wt{i}(\wt{\mf{m}})$.
        \item The natural map $\wt{\HH}^d_c(\wt{K}^p, L) \rightarrow \wt{\HH}^d(\wt{K}^p, L)$ is surjective.
        \item Let $\Lambda=\mathcal{O}_L/\varpi_L^n$ for some $n\geq 1$. Then $\on{sm}-\Ind_{P(F^+_p)}^{\wt{G}(F^+_p)} R\Gamma(K^p, \Lambda)_{\mf{m}}$ is a $\wt{\T}^S\times \wt{G}(F^+_p)$-equivariant direct summand of $R\Gamma_\partial(\wt{K}^p, \Lambda)_{\wt{\mf{m}}}$. Here, $\wt{\T}^S$ acts on $\on{sm}-\Ind_{P(F^+_p)}^{\wt{G}(F^+_p)} R\Gamma(K^p, \Lambda)_{\mf{m}}$ by composing $\mc{S}$ with the natural action of $\T^S$ and $P(F^+_p)$ acts on $R\Gamma(K^p, \Lambda)$ by inflating the action of $G(F_p^+)$.
    \end{enumerate}
\end{theorem}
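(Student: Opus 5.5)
This theorem gathers facts from the literature, so the plan is to reduce each part to a known result and check that the hypotheses on $\mf{m}$ (non--Eisenstein, decomposed generic) are exactly what those results require.

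For (1), we use the torsion vanishing theorems of Caraiani--Scholze \cite[Theorem 1.1]{CS24} and Koshikawa \cite[Theorem 1.3]{KoshikawaGeneric}. Since $\mf{m}$ is decomposed generic, so is $\wt{\mf{m}}=\mc{S}^*(\mf{m})$: its associated $2n$-dimensional residual representation is $\ol{\rho}_{\mf{m}}\oplus\ol{\rho}_{\mf{m}}^{c,\vee}(1-2n)$, and the eigenvalue condition at a prime $\ell$ split in $F$ can be arranged, as in \cite[proof of Theorem 5.4.1]{10AuthorPaper}, by choosing $\ell$ suitably (e.g.\ via Chebotarev). These results give $\HH^j(X^{\wt G}_{\wt K},\mO_L/\varpi_L^m)_{\wt{\mf{m}}}=0$ for $j<d$ and $\HH^j_c(X^{\wt G}_{\wt K},\mO_L/\varpi_L^m)_{\wt{\mf{m}}}=0$ for $j>d$, uniformly in $\wt K_p$ and $m$. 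By Poincaré duality (Proposition \ref{dualities}(1)), which interchanges $\wt{\mf{m}}$ and $\wt i(\wt{\mf{m}})$, the same vanishing holds at $\wt i(\wt{\mf{m}})$, and $\HH^j$ vanishes for $j>d$ as well. Passing to the colimit over $\wt K_p$ and the limit over $m$ (which is Mittag--Leffler, as in Remark \ref{remark: loc at max}), then inverting $p$, gives (1).

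For (2), take $H$ pro-$p$ and $\wt K_p=H$ sufficiently small. Then $R\Gamma_c(\wt K^p,\mO_L/\varpi^m)$ restricted to $H$ is computed by $\Hom$ out of a perfect complex of $\mO_L/\varpi^m[H/H']$-modules. Concretely, $R\Gamma_c(\wt K^p,\mO_L)$ is represented by $\Hom^{\rm cts}(F_\bullet,\mc C(H,\mO_L))$ with $F_\bullet$ a finite free $\mO_L[[H]]$-complex. Localizing at $\wt{\mf{m}}$ is a direct summand. The torsion vanishing shows that the derived $\mO_L[[H]]$-dual complex has cohomology only in one degree after reduction mod $\varpi$, hence is quasi-isomorphic to a complex of finite projective $\mO_L[[H]]$-modules of amplitude $[0,d]$. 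Because $H$ is pro-$p$, $\mO_L[[H]]$ is local, so these modules are free. Dualizing gives a complex whose terms are $\mc C(H,\mO_L)^{\oplus m}$, and inverting $p$ gives $\mc C^{\rm cts}(H,L)^{\oplus m}$, which is injective admissible. This is the standard argument from \cite[\S5]{caraiani2023vanishing} and \cite{GeeNewton}.

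For (3), localize the long exact sequence of (\ref{equation: ExcisionDistinguishedTriangleCompletedCohomology}); the cokernel of $\wt\HH^d_c\to\wt\HH^d$ injects into $\wt\HH^d_\partial$. Surjectivity should then follow from the degree $d$ vanishing of the next compactly supported term combined with (1)--(2), or directly from \cite[Theorem 1.1]{CS24}, which shows that $\HH_c^{d}\to\HH^{d}$ is surjective after localization at generic ideals. We read the statement as localized at $\wt{\mf{m}}$, since this is the only form used later.

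For (4), we invoke \cite[Theorem 2.4.2 and \S5.4]{10AuthorPaper}. After localizing at $\wt{\mf{m}}$, which is non--Eisenstein on the $G$ side, the boundary cohomology is governed by the Siegel stratum, and the contribution of $P$ is $\on{sm}\text{-}\Ind_{P(F_p^+)}^{\wt G(F_p^+)}R\Gamma(X^P,\Lambda)$. The unipotent fibration $X^P\to X^G$, together with Kostant's theorem (torsion coefficients at infinite $p$-level give trivial coefficients), reduces this to $R\Gamma(K^p,\Lambda)_{\mf m}$ via $\mc S$. Passing to completed cohomology is a colimit over $\wt K_p$ of finite-level statements, using decomposed-well levels.

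The main obstacle is (2): one must check that the free $\mO_L[[H]]$ resolution and the concentration in $[0,d]$ are compatible with localization in the derived Hecke algebra. We will handle this through the perfect-complex formalism of Remark \ref{coh vs hom}.
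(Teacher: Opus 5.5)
Your outline of (4) matches the paper, which simply cites \cite[Theorem 5.4.1]{10AuthorPaper}. Your (2) is the same kind of computation the paper uses (it refers to \cite[Proposition 5.4]{mcdonald2025eigenvarieties} together with the torsion vanishing). One correction in (2): $\HH^*_c(X^{\wt G}_{H\wt K^p},\mO_L/\varpi_L)_{\wt{\mf m}}$ need not be concentrated in one degree, because $\wt{\mf m}$ is Eisenstein and $\HH^{i}_{\partial}$ injects into $\HH^{i+1}_c$ for $i\le d-1$. What you actually need, and what suffices over the local ring $\mO_L[[H]]$, is only its vanishing above degree $d$.

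The genuine gaps are in (1) and (3). Both statements are about infinite level at $p$. They cannot be obtained from the finite-level vanishing of \cite{CS24}, \cite{KoshikawaGeneric}, Poincar\'e duality and passage to limits. So the main obstacle is not (2).

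\textbf{Part (1).} Your finite-level inputs are $\HH^j(X^{\wt G}_{\wt K})_{\wt{\mf m}}=0$ for $j<d$ and $\HH^j_c(X^{\wt G}_{\wt K})_{\wt{\mf m}}=0$ for $j>d$. Poincar\'e duality only transports these two statements to $\wt i(\wt{\mf m})$. Nothing here gives $\HH^j_{\wt{\mf m}}=0$ for $j>d$, and at finite level this need not hold. For $j>d$ one has $\HH^j(X^{\wt G}_{\wt K})_{\wt{\mf m}}\cong\HH^j_\partial(X^{\wt G}_{\wt K})_{\wt{\mf m}}$. The Siegel stratum can contribute classes built from the cohomology of the unipotent radical, in degrees up to $2d-1$. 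So your step ``pass to the colimit over $\wt K_p$'' cannot produce the upper bound. The paper uses instead the vanishing of completed cohomology above the middle degree, $\wt{\HH}^i(\wt K^p)=0$ for $i>d$ \cite[Theorem 2.6.2, Lemma 4.6.2]{CS24}. This is a perfectoid, infinite-level statement. Only the lower bound comes from the torsion vanishing, plus Poincar\'e duality for $\wt i(\wt{\mf m})$.

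\textbf{Part (3).} Your first suggestion goes the wrong way. Vanishing of $\wt{\HH}^{d+1}_{c}(\wt K^p,L)_{\wt{\mf m}}$ shows that $\wt{\HH}^d\to\wt{\HH}^d_\partial$ is surjective. Hence, after localizing at $\wt{\mf m}$, the cokernel of $\wt{\HH}^d_c\to\wt{\HH}^d$ is \emph{all} of $\wt{\HH}^d_\partial(\wt K^p,L)_{\wt{\mf m}}$, and (3) is equivalent to this group vanishing. None of (1), (2), (4) gives that. Part (4) only says that $\on{Ind}_{P(F_p^+)}^{\wt G(F_p^+)}\wt{\HH}^d(K^p)_{\mf m}$ is a direct summand; this term is $0$ since $\dim X^G_K=d-1$, but (4) says nothing about the complement.

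Your alternative, reading surjectivity off \cite[Theorem 1.1]{CS24}, does not work either. That theorem controls degree ranges. At finite level, after localizing at the Eisenstein ideal $\wt{\mf m}$, the cokernel of $\HH^d_c\to\HH^d$ is $\HH^d_\partial(X^{\wt G}_{\wt K})_{\wt{\mf m}}$, and the theorem gives no information about it. The paper obtains (3), even before localizing, from the infinite-level results of Hansen--Johansson \cite[Theorem 1.4, Remark 4.6, Corollary 4.7]{HJ23}.

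To close both gaps you need these infinite-level inputs. Alternatively, you would need an independent argument that the localized completed boundary cohomology vanishes in degrees $\ge d$; that would give both the upper bound in (1) and (3) through the long exact sequence.
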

\begin{proof}
\begin{enumerate}[label=(\arabic*)]
    \item Firstly, by \cite[Theorem 2.6.2, Lemma 4.6.2]{CS24} we have that $\wt{\HH}^i=0$ for $i>d$. From \cite{CS24} and \cite{KoshikawaGeneric} we have the vanishing of $\wt{\HH}^i_{\wt{\mf{m}}}$ for $i<d$ and the vanishing of $\wt{\HH}^i_c(\wt{K}_p\wt{K}^p,\mO_L/\varpi_L)_{\wt{\mf{m}}}$ for $i>d$ for all $\wt{K}_p$ small enough, which by Poincar\'e duality gives us the vanishing of $\wt{\HH}^i(\wt{K}_p\wt{K}^p,\mO_L/\varpi_L)_{\wt{i}(\wt{\mf{m}})}$ for all $i<d$ and therefore vanishing of $\wt{\HH}^i(\wt{K}^p)_{\wt{i}(\wt{\mf{m}})}$ for $i \neq d$. This proves the first claim.
    \item The second claim follows from computation as in \cite[Proposition 5.4]{mcdonald2025eigenvarieties} and the vanishing results (for both $\wt{\mf{m}}$ and $\wt{i}(\wt{\mf{m}})$) as in the previous part.
    \item The third claim follows from \cite[Theorem 1.4, Remark 4.6, Corollary 4.7]{HJ23}  -- here we note that their results also imply the statement for our conventions of locally symmetric spaces (defined as in \cite[Section 3.1]{NT16}).
    \item The fourth claim is the content of \cite[Theorem 5.4.1]{10AuthorPaper}.
\end{enumerate}
\end{proof}
We adapt the previous results for our purposes in the next corollary.
\begin{corollary} 
\label{corollary: MiddleDegreeOnCompletedLevel}
    \begin{enumerate}[label=(\arabic*)]
        \item For any $\wt{K}_p\subset \wt{G}(F_p^+)$ a compact open pro-$p$ subgroup, there exists an integer $m\geq 0$ such that there is a surjection $\mc{C}^{\rm{cts}}(\wt{K}_p,L)^{\oplus m} \twoheadrightarrow \wt{\HH}^d(\wt{K}^p, L)_{?}$ of admissible $\wt{K}_p$-representations, where $?$ denotes $\wt{\mf{m}}$ or $\wt{i}(\wt{\mf{m}})$.
        \item  For all $i\geq 0$, the continuous induction $\on{cts}-\Ind_{P(F_p^+)}^{\wt{G}(F_p^+)} \wt{\HH}^i(K^p, L)_{\mf{m}}$ $\wt{\T}^S \times \wt{G}(F_p^+)$--equivariantly embeds into $\wt{\HH}^i_{\partial}(\wt{K}^p, L)_{\wt{\mf{m}}}$. In particular, we have a continuous $\wt{\T}^S\times \wt{G}(F_p^+)$--equivariant embedding $\on{la}-\Ind_{P(F_p^+)}^{\wt{G}(F_p^+)} \wt{\HH}^i(K^p,L)^{G-\on{la}}_{\mf{m}} \hookrightarrow \wt{\HH}^{i}_{\partial}(\wt{K}^p,L)_{\wt{\mf{m}}}^{\wt{G}-\on{la}}$.
    \end{enumerate}
\end{corollary}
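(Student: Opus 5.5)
For part (1), I would start from Theorem \ref{vanishing}(2). It says that for a compact open pro-$p$ subgroup $H=\wt{K}_p$, the complex $R\Gamma_c(\wt{K}^p,L)_{?}$ is represented by a complex $C^0\to C^1\to\cdots\to C^d$. Each term $C^j$ is of the form $\mc{C}^{\rm cts}(\wt{K}_p,L)^{\oplus m_j}$. The top cohomology $\wt{\HH}^d_c(\wt{K}^p,L)_{?}$ is then a quotient of $C^d$, giving a $\wt{K}_p$-equivariant surjection $\mc{C}^{\rm cts}(\wt{K}_p,L)^{\oplus m_d}\twoheadrightarrow \wt{\HH}^d_c(\wt{K}^p,L)_?$.

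Next I would compose this with the surjection $\wt{\HH}^d_c\to\wt{\HH}^d$ of Theorem \ref{vanishing}(3), after localizing. Localization at $\wt{\mf{m}}$ or $\wt{i}(\wt{\mf{m}})$ is exact: via Remark \ref{remark: loc at max} it amounts to applying an idempotent, compatibly with the Hecke map $\wt{\T}^S_{\rm BM}\leftarrow\wt{\T}^S$. So the localized map is still surjective, and that gives (1).

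There is one subtlety here. The terms are only described as admissible $H$-representations, and the quasi-isomorphism must be realized by an honest complex, not merely in the derived category. I would take this from the construction in \cite[Proposition 5.4]{mcdonald2025eigenvarieties}: one can take a minimal complex of injectives in the category of admissible $\wt{K}_p$-representations, and the cokernel of the last differential is then the top cohomology. Admissibility of the surjection follows from \cite{emerton2006interpolation} (Proposition \ref{dualities}(2)).

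For part (2), I would work with torsion coefficients first and then pass to the limit. Theorem \ref{vanishing}(4) gives $\on{sm}\text{-}\Ind_{P(F_p^+)}^{\wt{G}(F_p^+)}R\Gamma(K^p,\Lambda)_{\mf{m}}$ as an equivariant direct summand of $R\Gamma_\partial(\wt{K}^p,\Lambda)_{\wt{\mf{m}}}$, where $\Lambda=\mO_L/\varpi_L^n$. Smooth induction from the cocompact subgroup $P(F_p^+)$ is exact, since $\wt{G}/P$ is compact and the functor is locally constant functions on a profinite set. So taking $\HH^i$ gives a compatible system of direct summands $\on{sm}\text{-}\Ind\,\wt{\HH}^i(K^p,\Lambda)_{\mf{m}}\subset \wt{\HH}^i_\partial(\wt{K}^p,\Lambda)_{\wt{\mf{m}}}$.

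I would then take $\varprojlim_n$. Both sides are Mittag-Leffler, the inverse limit of smooth induction with $\mO_L/\varpi_L^n$ coefficients is continuous induction with $\mO_L$ coefficients, and inverse limits commute with the direct summand decomposition. The projector is compatible in $n$ because it is constructed functorially in $\Lambda$ in \cite{10AuthorPaper}, and this is the step I expect to need the most care. I also need the identification $\varprojlim_n\wt{\HH}^i(\cdot,\Lambda_n)=\wt{\HH}^i(\cdot,\mO_L)$ modulo $\lim^1$ terms. Those $\lim^1$ terms are killed after inverting $p$, or handled by the lemma on localization and limits above. Inverting $p$ then yields a closed embedding of Banach representations, in fact a direct summand, which gives the first statement.

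Finally, I would apply the functor of $\wt{G}$-locally analytic vectors. It preserves closed embeddings. For the identification $(\on{cts}\text{-}\Ind_P^{\wt{G}} W)^{\wt{G}-\la}\supseteq \on{la}\text{-}\Ind_P^{\wt{G}}W^{G-\la}$, I would use that $\wt{G}(F_p^+)=P(F_p^+)\wt{K}_p$ by the Iwasawa decomposition. Restricted to an analytic chart $\overline{U}_0$ of the big cell, a locally analytic function valued in $W^{\la}$ gives a locally analytic vector, and this yields the stated embedding.
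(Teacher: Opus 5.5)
Your proposal is correct and is essentially the paper's argument. Part (1) comes from Theorem \ref{vanishing}(2)+(3) exactly as you describe, and part (2) comes from Theorem \ref{vanishing}(4), a passage to the limit over $n$ (the paper simply cites \cite[Proposition 7.1]{FuDerived} for this), and then taking $\wt{G}(F_p^+)$-locally analytic vectors. Two minor corrections that do not affect the argument: the $\varprojlim^1$ terms vanish outright by Mittag--Leffler (as in Remark \ref{remark: loc at max}) rather than merely after inverting $p$, and $\wt{G}(F_p^+)=P(F_p^+)\wt{K}_p$ holds for $\wt{K}_p$ maximal (e.g.\ $\wt{G}(\mO_{F_p^+})$), not for pro-$p$ $\wt{K}_p$.
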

\begin{proof}
    The first part follows from the previous Theorem \ref{vanishing} (2) + (3).
    The second part follows from Theorem \ref{vanishing} (4), more precisely the proof of \cite[Theorem 5.4.1]{10AuthorPaper}, and taking a limit over $n$ (more specifically, see \cite[Proposition 7.1]{FuDerived}). We note that continuity of the embedding in the second part is automatic, since both spaces are admissible $\tG(F_p^+)$-representations (see e.g. \cite[Proposition 3.1]{schneider2002banach}). The last part follows by taking $\wt{G}(F_p^+)$--locally analytic vectors.
    \end{proof}
Next, we move from the boundary to compactly supported cohomology.
\begin{theorem}
\label{theorem: G-la in G tilde la}
    There is a (natural) $\wt{\T}^S_{\wt{\mf{m}}}\times U(\mf{g})$--equivariant embedding $\wt{\HH}^i(K^p, L)_{\mf{m}}^{G-\on{la}} \hookrightarrow \wt{\HH}^{i+1}_c(\wt{K}^p,L)_{\wt{\mf{m}}}^{\wt{G}-\on{la}}$ for all $i=0,\ldots,d-1$, where the $\wt{\T}^S_{\wt{\mf{m}}}$-action on the source factors through $\mc{S}: \wt{\T}^S_{\wt{\mf{m}}} \to \T^S_{\mf{m}}$. Furthermore, the Lie algebra $\ol{\mf{n}}:=\on{Lie}(\overline{U}(F_p^+))$ annihilates $\wt{\HH}^i(K^p, L)_{\mf{m}}^{G-\on{la}}$ inside of $\wt{\HH}^{i+1}_c(\wt{K}^p,L)_{\wt{\mf{m}}}^{\wt{G}-\on{la}}$.
\end{theorem}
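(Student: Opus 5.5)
The embedding will be built as a composite of three maps. First, the function $f\mapsto f(1)$ identifies $\wt{\HH}^i(K^p,L)^{G-\on{la}}_{\mf{m}}$ with a subspace of $\on{la}-\Ind_{P(F_p^+)}^{\wt{G}(F_p^+)} \wt{\HH}^i(K^p,L)^{G-\on{la}}_{\mf{m}}$. Concretely, we use the big cell $P(F_p^+)\overline{U}_0\subset \wt{G}(F_p^+)$, which is open. To $v$ we attach the function supported on $P(F_p^+)\overline{U}_0$ given by $pu\mapsto p.v$ for $p\in P(F_p^+)$ and $u\in \overline{U}_0$. This is well defined because $P\cap\overline{U}=1$. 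Since $\overline{U}_0$ is compact open in $\overline{U}(F_p^+)$ and $P(F_p^+)\overline{U}_0$ is open and closed in $\wt{G}(F_p^+)$, the function is locally analytic. The resulting map is injective and $P(F_p^+)$-equivariant in the appropriate twisted sense. It is more convenient to view it as an embedding into the $\overline{U}_0$-invariants, $\bigl(\on{la}-\Ind\bigr)^{\overline{U}_0}$, which are equivariant for $G(F_p^+)$ acting via the Levi. Any element invariant under $\overline{U}_0$ is killed by $\ol{\mf{n}}$, so the image will be annihilated by $\ol{\mf{n}}$. Because $\ol{\mf{n}}\oplus\mf{g}$ is a subalgebra, one checks $U(\mf{g})$-equivariance on the locally analytic vectors directly: $G$ normalizes $\overline{U}$, so the $U(\mf{g})$-action preserves the $\ol{\mf{n}}$-invariants.

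Second, compose with the embedding $\on{la}-\Ind_{P(F_p^+)}^{\wt{G}(F_p^+)} \wt{\HH}^i(K^p,L)^{G-\on{la}}_{\mf{m}}\hookrightarrow \wt{\HH}^{i}_{\partial}(\wt{K}^p,L)^{\wt{G}-\on{la}}_{\wt{\mf{m}}}$ from Corollary \ref{corollary: MiddleDegreeOnCompletedLevel}(2). This map is $\wt{\T}^S\times\wt{G}(F_p^+)$-equivariant, with the Hecke action on the source through $\mc{S}$, so it preserves both the $U(\mf{g})$-action and $\ol{\mf{n}}$-annihilation.

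Third, use the connecting map $\wt{\HH}^i_{\partial}(\wt{K}^p,L)_{\wt{\mf{m}}}\to\wt{\HH}^{i+1}_c(\wt{K}^p,L)_{\wt{\mf{m}}}$ of the localized long exact sequence attached to \eqref{equation: ExcisionDistinguishedTriangleCompletedCohomology}; see Remark \ref{remark: loc at max}. We must show it is injective for $i\le d-1$. Its kernel is the image of $\wt{\HH}^i(\wt{K}^p,L)_{\wt{\mf{m}}}$, which vanishes for $i\neq d$ by Theorem \ref{vanishing}(1). For $i\le d-1$ this gives injectivity. The connecting map is $\wt{\T}^S\times\wt{G}(F_p^+)$-equivariant and continuous between admissible Banach representations. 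Hence it restricts to an injection on $\wt{G}$-locally analytic vectors, and these are functorial, which gives compatibility with $U(\wt{\mf{g}})\supset U(\mf{g})+\ol{\mf{n}}$.

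The main obstacle I expect is the first step. I must get the twisting and normalization conventions for the induction right, so that the Levi action on the $\overline{U}_0$-invariants is exactly the given $G(F_p^+)$-action, and so that the $G$-locally analytic vectors land inside the $\wt{G}$-locally analytic induction. The latter is the main point to check: a function that is locally analytic on $P(F_p^+)\overline{U}_0$ must be locally analytic on all of $\wt{G}(F_p^+)$. This should follow from the product decomposition $P\times\overline{U}\to\wt{G}$ being an analytic open immersion.
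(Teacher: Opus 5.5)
Your proposal is correct and is essentially the paper's proof. Your $f_v$ is exactly the preimage of the constant function $v$ under Emerton's restriction isomorphism $(\la-\Ind_{P(F_p^+)}^{\wt{G}(F_p^+)}V)(P(F_p^+)\ol{U}_0)\simeq \mc{C}^{\la}(\ol{U}_0,V)$ used there, followed by Corollary \ref{corollary: MiddleDegreeOnCompletedLevel}(2) and the injectivity of $\wt{\HH}^i_{\partial}\to\wt{\HH}^{i+1}_c$ from Theorem \ref{vanishing}(1).

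One small correction concerns the equivariance in your first step. The map $v\mapsto f_v$ is not $G(F_p^+)$-equivariant: for $m\in G(F_p^+)$, $m\cdot f_v$ is supported on $P(F_p^+)\,m\ol{U}_0m^{-1}$. It is only equivariant for Levi elements normalizing $\ol{U}_0$, such as $G(\mO_{F_p^+})$, which is why the paper claims only $\ol{P}_0$-equivariance. The identity $m\cdot f_v=f_{m.v}$ for such $m$ is exactly what you should differentiate to get $U(\mf{g})$-equivariance. Your remark that $U(\mf{g})$ preserves the $\ol{\mf{n}}$-invariants does not by itself show the map is equivariant.
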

\begin{proof}
     Localizing the distinguished triangle \eqref{equation: ExcisionDistinguishedTriangleCompletedCohomology} at $\wt{\mf{m}}$ and passing to the long exact sequence, Theorem \ref{vanishing} (1) gives $\wt{\HH}_{\partial}^{i}(\wt{K}^p, L)_{\wt{\mf{m}}} \hookrightarrow \wt{\HH}_c^{i+1}(\wt{K}^p, L)_{\wt{\mf{m}}}$ for $i \leq d-1$, so it suffices to produce an embedding $\wt{\HH}^{i}(K^p, L)_{\mf{m}}^{G-\la} \hookrightarrow \wt{\HH}_{\partial}^i(\wt{K}^p, L)_{\wt{\mf{m}}}^{\wt{G}-\la}$ for all $i \le d-1$. Using Corollary \ref{corollary: MiddleDegreeOnCompletedLevel}, we just need to produce an embedding $\wt{\HH}^{i}(K^p, L)_{\mf{m}}^{G-\la} \hookrightarrow$  $\la-\Ind_{P(F_p^+)}^{\wt{G}(F_p^+)} (\wt{\HH}^i(K^p, L)_{\mf{m}}^{G-\la})$. Set $V = \wt{\HH}^{i}(K^p, L)_{\mf{m}}^{G-\la}$ and consider the subspace \[
    (\la-\Ind_{P(F_p^+)}^{\wt{G}(F_p^+)}V)(P(F_p^+)\ol{U}_0)\hookrightarrow \la-\Ind_{P(F_p^+)}^{\wt{G}(F_p^+)}V
    \] of locally analytic functions whose nonzero germs are contained in $P(F_p^+)\ol{U}_0$. By \cite[Lemma 2.3.3]{Emerton_Jacquet_II} we have a $\ol{P}_0$-equivariant isomorphism $(\la-\Ind_{P(F_p^+)}^{\wt{G}(F_p^+)}V)(P(F_p^+)\ol{U}_0)\simeq \mc{C}^{\la}(\ol{U}_0,V)$, via the natural map $f \mapsto f|_{\ol{U}_0}$. By passing through the inverse of this isomorphism, the subspace of constant functions $V \simeq (\mc{C}^{\la}(\ol{U}_0,V))^{\ol{U}_0} \hookrightarrow \mc{C}^{\la}(\ol{U}_0,V)$ gives the desired embedding. Since all the maps are $\ol{P}_0$-equivariant, we also have that $\wt{\HH}^i(K^p, L)_{\mf{m}}^{G-\on{la}} = V \hookrightarrow (\la-\Ind_{P(F_p^+)}^{\wt{G}(F_p^+)}V)^{\ol{U}_0}\hookrightarrow (\wt{\HH}_c^{i+1}(\wt{K}^p, L)_{\wt{\mf{m}}}^{\wt{G}-\la})^{\ol{U}_{0}}$, all inclusions being $\wt{\T}^S_{\wt{\mf{m}}} \times U(\mf{g})$--equivariant. The last statement is clear from the construction of the embedding.
\end{proof}
\begin{remark}
    Keeping track of all the compatibilities of Hecke actions on finite levels, as in the end of \cref{subsection: big Hecke}, we note that the $\wt{\T}^S_{\wt{\mf{m}}}$--equivariance in the theorem above upgrades naturally to $\wt{\T}^S_{\on{BM}}(\wt{K}^p)_{\wt{\mf{m}}}$--equivariance, where $\wt{\T}^S_{\on{BM}}(\wt{K}^p)_{\wt{\mf{m}}}$ acts on $\wt{\HH}^i(K^p,L)_{\mf{m}}^{G-\on{la}}$ via $\mc{S}: \wt{\T}^S_{\on{BM}}(\wt{K}^p)_{\wt{\mf{m}}} \rightarrow \T^S(K^p)_{\mf{m}}$.
\end{remark}
In the next section, we show that the infinitesimal action on $\wt{\HH}^*_c(\wt{K}^p,L)_{\wt{\mf{m}}}^{\wt{G}-\on{la}}$ is given by the expected generalized Hodge-Tate-Sen character (that we recall).

\section{Infinitesimal action for $\wt{G}$}
\label{section: Infinitesimal action for unitary group}

In order to use results from \cite{DPS25} and \cite{PasQua25} to construct the appropriate generalized Hodge-Tate-Sen characters, we need to start with the appropriate (continuous) Galois representations or (continuous) Lafforgue pseudocharacters valued in the desired Hecke algebras. This is the content of \cref{subsection: existence of Galois representations} and, later, we will use it in \cref{subsection: inf act on unitary} to prove that $Z(\wt{\mf{g}})$ acts on completed cohomology groups via the appropriate generalized Hodge-Tate-Sen character.
We refer the reader to \cite[Definition 4.1]{PasQua25} for the notion of Lafforgue's pseudocharacters. We note that, by the main theorem of \cite{emerson2018comparison}, this notion is equivalent to the notion of a (continuous) determinant, as defined in \cite[Sections 1.5, 2.30]{Che14}. For the most part we will use the notion of determinants explicitly and implicitly view them as pseudocharacters as well. 

For convenience we briefly recall the definition of a determinant.
Let $A$ be a commutative ring and $M,N$ two $A$--modules. If $\mc{C}_A$ denotes the category of commutative $A$-algebras, then let $\ul{M}: \mc{C}_A \to \mathsf{Set}$ denote the functor $B \mapsto M \otimes_A B$. An $A$--\textit{polynomial law} $P:M\to N$ is a natural transformation $\ul{M} \to \ul{N}$, i.e. a collection of maps $M \otimes_A B \rightarrow N\otimes_A B$, compatible with scalar extensions via any morphism $\varphi: B \to B'$ in $\mc{C}_A$. An $A$--polynomial law $D$ is called \textit{homogeneous of degree} $d\geq0$ if $D(bx)=b^d D(x)$ for all $B \in \mathcal{C}_A$, all $b\in B$ and all $x \in M \otimes_A B$. If $M$ and $N$ are also $A$--algebras, then $D$ is called \textit{multiplicative} if it commutes with multiplication morphisms, i.e. for all $B \in \mathcal{C}_A$, all $x,y\in M\otimes_AB$, $D(xy)=D(x)D(y)$.
\begin{definition}[{\cite[p. 230]{Che14}}]
     Let $A$ be a (topological) commutative ring, $G$ a (topological) group and let $B$ be a (topological) commutative $A$--algebra. A $d$-dimensional \textit{determinant} with values in $B$ is a multiplicative and homogeneous of degree $d$ $A$--polynomial law $D:A[G]\rightarrow B$. For any $g \in G$ the function $D(X-g)\in B[X]$ is called a \textit{characteristic polynomial} of $g$. We say $D$ is \textit{continuous} if the map $G \rightarrow B[X]$, $g \mapsto D(X-g)$ is continuous (equivalently, if all coefficient functions are continuous).
\end{definition}
\subsection{Determinants valued in Hecke algebras}
\label{subsection: existence of Galois representations}
Let $L/\Q_p$ be a finite extension, $\mathcal{O}_L$ its ring of integers with a uniformizer $\varpi_L \in \mathcal{O}_L$.
We now record the following results about continuous determinants with values in Hecke algebras and the characteristic polynomials given by $\wt{P}_{\nu}$ and $P_\nu$. 
\begin{theorem}[Theorem 6.5.1 + proof of Theorem 6.5.3 of  \cite{caraiani2020shimura}]\label{determinant}
There exists a unique continuous $2n$--dimensional determinant $D: \mathcal{O}_L[{\Gal}_{F,S}] \rightarrow \wt{\T}^S(\wt{K}^p)$ such that for any prime $\nu \notin S$,
$$D(X-\on{Frob}_\nu)=\wt{P}_{\nu}(X) \ .
$$
\end{theorem}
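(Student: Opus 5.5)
Uniqueness is the easy half, and I would dispose of it first. By Chebotarev, the Frobenius elements $\on{Frob}_\nu$ for $\nu\notin S$ are dense in $\Gal_{F,S}$. A continuous determinant valued in a profinite ring is determined by its characteristic polynomial coefficient functions, via Chenevier's results: for determinants over a base in which $(2n)!$ need not be invertible one uses the full set of coefficients of characteristic polynomials of all elements of $\mO_L[\Gal_{F,S}]$, but these are continuous functions on products of copies of $\Gal_{F,S}$ and hence are determined by their values on a dense set. So any two continuous determinants with the prescribed $D(X-\on{Frob}_\nu)=\wt{P}_\nu(X)$ agree. For existence, I would write $\wt{\T}^S(\wt{K}^p)=\varprojlim_{m,\wt{K}_p}\wt{\T}^S(\wt{K}_p\wt{K}^p,m)$. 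Continuous determinants into a profinite ring are the inverse limit of determinants into its discrete finite quotients, and uniqueness makes the transition compatibilities automatic. It therefore suffices to construct, for each finite level and each $m$, a determinant $D_{\wt{K}_p,m}:\mO_L[\Gal_{F,S}]\to \wt{\T}^S(\wt{K}_p\wt{K}^p,m)$ with the right Frobenius characteristic polynomials.

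The finite-level statement is where the real input, namely Scholze's construction as refined in \cite{caraiani2020shimura}, enters; there is no point redoing it, and I would organize the argument around it. Since $\wt{\T}^S(\wt{K}_p\wt{K}^p,m)$ is a finite, hence semilocal, ring, I would split it into its localizations at maximal ideals and work with one maximal ideal at a time. For each maximal ideal I would apply \cite[Theorem 6.5.1]{caraiani2020shimura}: via the perfectoid Shimura variety and the Hodge--Tate period map, it realizes the Hecke action on the derived complex $C_\bullet(\wt{K}_p\wt{K}^p,m)$ through cohomology of the compact unitary Shimura varieties with torsion coefficients, where Galois determinants exist. This produces a determinant valued in the Hecke image, up to a nilpotent ideal whose exponent depends only on $n$ and $[F^+:\Q]$.

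The main obstacle is removing that nilpotent ideal. The finite-level Hecke algebras are defined as images in $\End_{D(\cdot)}$ of the complex, which is the derived version, and the proof of \cite[Theorem 6.5.3]{caraiani2020shimura} shows how to obtain an honest determinant with values in exactly this derived Hecke algebra. I would follow that proof: pass to the big Hecke algebra over the Iwasawa algebra $\mO_L[[K_0]]$, where the complex is perfect; use that the nilpotence exponent is uniform in the level; and apply a Newton-type argument for determinants, as in \cite{Che14}, showing that a determinant known modulo $I$ with $I^N=0$ and uniform $N$ lifts uniquely in the inverse limit. Continuity of the resulting determinant is inherited from the finite quotients, since the coefficient functions at each finite level factor through a finite quotient of $\Gal_{F,S}$.

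The last point to check is the normalization: the polynomial $\wt{P}_\nu$ built from the elements $T_{G,\nu,i}$ of \cite{NT16} must match the characteristic polynomial of a geometric Frobenius in \cite{caraiani2020shimura}. I expect this to be a matter of comparing conventions rather than a substantive difficulty.
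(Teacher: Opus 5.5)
Your two reductions match the paper: uniqueness via Chebotarev density, and reduction of existence to the finite discrete rings $\wt{\T}^S(\wt{K}_p\wt{K}^p,m)$. The gap is in the finite-level step. You describe \cite[Theorem 6.5.1]{caraiani2020shimura} as giving a determinant only modulo a nilpotent ideal of bounded exponent. You then propose to remove that ideal by a ``Newton-type'' lifting that uses uniformity of the exponent in the level. That step fails for two reasons. First, a determinant valued in $B/I$ with $I^N=0$ need not lift to a determinant valued in $B$ with the prescribed Frobenius characteristic polynomials. Chebotarev would give uniqueness of such a lift, but nothing in \cite{Che14} gives existence. Second, a uniform bound on $N$ only shows that the inverse limit of the ideals is again nilpotent of exponent at most $N$; it does not show that it vanishes. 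This is exactly why the determinants for $\GL_n$ in \cite{Sch15} and \cite{NT16} are only valued in $\T/I$, and why removing $I$ there required genuinely new geometric input, namely the vanishing theorems of \cite{caraiani2020shimura}.

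The missing idea is that for $\wt{G}$ itself no nilpotent ideal ever appears. \cite[Theorem 6.5.1]{caraiani2020shimura}, a restatement of \cite[Corollary 5.1.11]{Sch15}, gives an honest continuous $2n$-dimensional determinant valued in $\T^S_{cl}$. This is the completed Hecke algebra of classical cusp forms on the quasi-split unitary group $\wt{G}$, whose Shimura varieties are non-compact, not compact as you state. The proof of \cite[Theorem 6.5.3]{caraiani2020shimura} shows that the map $\T^S_{cl}\to\End_{D(\mathbf{Z}_p/p^m[K_0/\wt{K}_p])}(C^{\bullet}(\wt{K}_p\wt{K}^p,m))$ is continuous. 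Hence the finite-level derived Hecke algebra is a quotient of $\T^S_{cl}$, and the determinant is simply pushed forward. To apply this you also need two further inputs:
\begin{itemize}
\item Remark \ref{coh vs hom}, which identifies the image of $\wt{\T}^S$ in endomorphisms of the cohomology complex $C^{\bullet}$ with $\wt{\T}^S(\wt{K}_p\wt{K}^p,m)$, itself defined via the homology complex $C_{\bullet}$;
\item the reduction to $L=\mathbf{Q}_p$ by extension of scalars.
\end{itemize}
A minor point on uniqueness: you only know characteristic polynomials at single Frobenius elements, not on a dense subset of products of copies of $\Gal_{F,S}$. The correct reduction is that, by Amitsur's formula in \cite{Che14}, a determinant is determined by the function $g\mapsto D(X-g)$ on $\Gal_{F,S}$. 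This is a continuous class function, so Chebotarev pins it down.
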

\begin{proof}
    Uniqueness is immediate because of density of the elements $(\on{Frob}_\nu)_{\nu \notin S}$ in $\on{Gal}_{F,S}$ (as in \cite[Example 2.31]{Che14}).
    For the existence, arguing as in \cite[Example 2.32]{Che14}, it suffices to construct such continuous determinants into each finite discrete ring $\wt{\T}^S(\wt{K}_p\wt{K}^p, m)$. Using Remark \ref{coh vs hom}, it is enough to show the claim for $$\Im(\wt{K}_p\wt{K}^p,m):=\on{im}(\wt{\T}^S \rightarrow \on{End}_{D(\mathcal{O}_L/\varpi_{L}^m[K_0/\wt{K}_p])}(C^{\bullet}(\wt{K}_p\wt{K}^p, m))) \ .$$ By extending scalars, it suffices to consider the case $L=\mathbf Q_p$. This follows from the proof of \cite[Theorem 6.5.3]{caraiani2020shimura} (see the beginning of p. 1215), which says that the map
    \begin{equation}
     \T^S_{cl} \rightarrow \on{End}_{D(\mathbf{Z}_p/p^m[K_0/\wt{K}_p])}(C^{\bullet}(\wt{K}_p\wt{K}^p, m))
    \end{equation}
    is continuous, where the RHS is equipped with discrete topology. By Theorem 6.5.1 of \textit{loc.\,cit.} (which is a restatement of \cite[Corollary 5.1.11]{Sch15}), we get the claim for $\Im(\wt{K}_p\wt{K}^p,m)$. 
\end{proof}
\begin{remark}
    The reference \cite{caraiani2020shimura} works in the case that $p$ splits completely in $F$, but the arguments for Theorem 6.5.1 and Theorem 6.5.3 do not use this assumption.
\end{remark}
We recall $\wt{\T}^S(\wt{K}^p)_{\wt{\mf{m}}}=\varprojlim_n \wt{\T}^S(\wt{K}^p)_{\wt{\mf{m}}}/{\wt{\mf{m}}}^n$ has the ${\wt{\mf{m}}}$-adic topology and note that the localization $\wt{\T}^S(\wt{K}^p) \rightarrow \wt{\T}^S(\wt{K}^p)_{\wt{\mf{m}}}$ is continuous.
\begin{corollary}\label{loc at max det}
    There exists a unique continuous $2n$--dimensional determinant $\wt{D}: \mathcal{O}_L[\on{Gal}_{F,S}] \rightarrow \wt{\T}^S(\wt{K}^p)_{\wt{\mf{m}}}$ such that for any prime $\nu \notin S$,
$$\wt{D}(X-\on{Frob}_\nu)=\wt{P}_{\nu}(X) \ .
$$
\end{corollary}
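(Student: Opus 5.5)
The idea is to obtain $\wt{D}$ by pushing forward the determinant $D$ of Theorem \ref{determinant} along the localization map $\wt{\T}^S(\wt{K}^p) \to \wt{\T}^S(\wt{K}^p)_{\wt{\mf{m}}}$. For existence, let $e$ be the idempotent of $\wt{\T}^S(\wt{K}^p)=\prod_i \wt{\T}^S(\wt{K}^p)_{\mf{m}_i}$ cutting out the factor at $\wt{\mf{m}}$, and let $\pi: \wt{\T}^S(\wt{K}^p) \to \wt{\T}^S(\wt{K}^p)_{\wt{\mf{m}}}$ be the projection, which is a continuous ring homomorphism as noted just before the corollary. Determinants are functorial under ring homomorphisms of the target: for any $\mO_L$-algebra map $f: B \to B'$, the composite $\wt{D} := f \circ D$ is again a multiplicative, homogeneous degree $2n$ polynomial law. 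Concretely, for $C \in \mc{C}_{\mO_L}$ one sets $\wt{D}_C := (f \otimes \id_C)\circ D_C$, and naturality, multiplicativity and homogeneity are all preserved because $f \otimes \id_C$ is a ring homomorphism (see \cite[Section 1]{Che14}).

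Next we check the required properties. Characteristic polynomials transform by applying $f$ coefficientwise, so $\wt{D}(X-\on{Frob}_\nu) = \pi(\wt{P}_\nu(X))$. By the usual abuse of notation, the image of $\wt{P}_\nu(X)$ in the localization is again written $\wt{P}_\nu(X)$. Continuity is equally formal: the map $g \mapsto \wt{D}(X-g)$ equals $\pi$ (applied to coefficients) composed with the continuous map $g \mapsto D(X-g)$. Since $\pi$ is continuous for the profinite topology on the source and the $\wt{\mf{m}}$-adic topology on the target, the composite is continuous. The only point to verify here is that the $\wt{\mf{m}}$-adic topology on the localization agrees with the quotient/product topology induced from $\wt{\T}^S(\wt{K}^p)$. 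This holds because the $\wt{\mf{m}}$-adic topology on a local factor of a complete semilocal profinite ring coincides with its profinite topology, as recalled in \cite[Lemma 2.1.14]{GeeNewton}.

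Uniqueness follows exactly as in the proof of Theorem \ref{determinant}. A continuous determinant is determined by the characteristic polynomials of a dense subset, because its coefficient functions are continuous and the determinant is recovered from them \cite[Example 2.31]{Che14}. The Frobenius elements $\on{Frob}_\nu$ for $\nu \notin S$ are dense in $\Gal_{F,S}$ by Chebotarev, which gives uniqueness. The only mildly non-formal step is the topological comparison in the continuity check; the rest is bookkeeping.
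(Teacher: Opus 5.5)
Your proposal follows the paper's argument. The paper also obtains $\wt{D}$ by composing the determinant of Theorem \ref{determinant} with the localization $\wt{\T}^S(\wt{K}^p)\to\wt{\T}^S(\wt{K}^p)_{\wt{\mf{m}}}$, taking continuity of that map as given, and uniqueness is the density argument from Theorem \ref{determinant}, as you say.

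The one thing to correct is your justification of that continuity. It is not true for an arbitrary profinite (semi)local ring that the $\mf{m}$-adic topology agrees with the profinite one. For example, $\F_p\oplus\prod_{\N}\F_p$ with square-zero maximal ideal is profinite and local, and trivially $\mf{m}$-adically complete, but its $\mf{m}$-adic topology is discrete. So the step needs a finiteness input such as finite generation of $\wt{\mf{m}}$.

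Here that input is supplied by Corollary \ref{corollary: big Hecke algebras are Noetherian}, and there is no circularity. Its proof only needs the pushed-forward determinant to be continuous for the profinite topology on the local factor, which is automatic since $\pi$ is projection onto a factor of a finite product. Once the factor is Noetherian, each $\wt{\mf{m}}^k$ is finitely generated, hence closed, and of finite index, hence open. Therefore $\pi$ is continuous for the $\wt{\mf{m}}$-adic topology.
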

\begin{proof}
    This follows from the previous theorem by restricting $\wt{\T}^S(\wt{K}^p)\rightarrow \wt{\T}^S(\wt{K}^p)_{\wt{\mf{m}}}$.
\end{proof}
Note that Theorem \ref{determinant} gives us a similar statement for compactly supported cohomology.
\begin{corollary}\label{comp. supp. det.}
  There exists a unique continuous $2n$--dimensional determinant $D: \mathcal{O}_L[{\Gal}_{F,S}] \rightarrow \wt{\T}_{\rm{BM}}^S(\wt{K}^p)$ such that for any prime $\nu \notin S$,
$$D(X-\on{Frob}_\nu)=\wt{P}_{\nu}(X) \ .
$$  
As a consequence, for any open maximal ideal $\wt{\mf{m}}\subset \wt{\T}^S_{\rm{BM}}(\wt{K}^p)$, there exists a unique continuous $2n$--dimensional determinant $\wt{D}: \mathcal{O}_L[\on{Gal}_{F,S}] \rightarrow \wt{\T}^S_{\rm{BM}}(\wt{K}^p)_{\wt{\mf{m}}}$ such that for any prime $\nu \notin S$,
$$\wt{D}(X-\on{Frob}_\nu)=\wt{P}_{\nu}(X) \ .
$$
\end{corollary}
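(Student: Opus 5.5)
We deduce the statement from Theorem \ref{determinant}, using the Hecke map $\wt{i}: \wt{\T}^S(\wt{K}^p) \rightarrow \wt{\T}^S_{\rm{BM}}(\wt{K}^p)$ coming from Poincar\'e duality, together with the identity $\wt{i}(\wt{P}_{\nu}(X)) = \wt{P}_{\nu^c}(X)$.

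First, let $D_0: \mO_L[\Gal_{F,S}] \rightarrow \wt{\T}^S(\wt{K}^p)$ be the determinant of Theorem \ref{determinant}. Consider the conjugated determinant $D_0^c(x) := D_0(c x c^{-1})$, where $c$ acts on $\Gal_{F,S}$ by conjugation by a lift of complex conjugation. This is well defined because $S = S^c$, and it is again a continuous $2n$-dimensional determinant. Its characteristic polynomial at $\Frob_{\nu}$ is $D_0(X - \Frob_{\nu^c}) = \wt{P}_{\nu^c}(X)$.

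Next, compose with the continuous ring map $\wt{i}$ and set $D := \wt{i} \circ D_0^c$. Composing a multiplicative homogeneous polynomial law with a ring homomorphism gives another such law, and continuity is preserved because $\wt{i}$ is continuous. We then compute
\[
D(X - \Frob_{\nu}) = \wt{i}\bigl(\wt{P}_{\nu^c}(X)\bigr) = \wt{P}_{(\nu^c)^c}(X) = \wt{P}_{\nu}(X),
\]
where the middle equality uses $\wt{i}(\wt{P}_{\nu}(X)) = \wt{P}_{\nu^c}(X)$ and that $\wt{i}$ is an involution on $\wt{\T}^S$.

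The step I expect to need the most care is that $\wt{i}$ really is a ring homomorphism intertwining the abstract Hecke action on $\wt{\T}^S$ with the action on $\wt{\T}^S_{\rm{BM}}(\wt{K}^p)$. This is recorded in the remark after Proposition \ref{dualities}, and it rests on the equivariance in part (1) of that proposition and on the perfectness of the finite-level complexes (Remark \ref{coh vs hom}). If that identification turns out to be shaky, the fallback is to redo the proof of Theorem \ref{determinant} directly for Borel--Moore homology. That means using the finite-level Hecke algebras $\wt{\T}^S_{\rm{BM}}(\wt{K}_p\wt{K}^p, m)$, identifying them via Poincar\'e duality with the images in the endomorphisms of $C^{\bullet}$, up to twisting by $\wt{i}$, and then applying \cite[Corollary 5.1.11]{Sch15} and the continuity argument from \cite{caraiani2020shimura} exactly as before.

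Uniqueness follows, as in Theorem \ref{determinant}, from Chebotarev density of the $\Frob_{\nu}$ in $\Gal_{F,S}$ together with continuity, since a continuous determinant is determined by its characteristic polynomials on a dense set (\cite[Example 2.31]{Che14}). The localized statement then follows exactly as in Corollary \ref{loc at max det}: compose with the continuous localization map $\wt{\T}^S_{\rm{BM}}(\wt{K}^p) \rightarrow \wt{\T}^S_{\rm{BM}}(\wt{K}^p)_{\wt{\mf{m}}}$, which is a projection onto a direct factor of the semilocal ring. Uniqueness there again comes from density.
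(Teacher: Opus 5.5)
Your proof is correct. It shares its first step with the paper's sketch, namely base change along $\wt{i}: \wt{\T}^S(\wt{K}^p)\to\wt{\T}^S_{\rm{BM}}(\wt{K}^p)$ using $\wt{i}(\wt{P}_\nu(X))=\wt{P}_{\nu^c}(X)$. Where you differ is in how you undo the resulting swap $\nu\leftrightarrow\nu^c$.

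\emph{The paper's route.} It keeps $\wt{i}\circ D_0$, whose characteristic polynomials are $\wt{P}_{\nu^c}(X)=q_\nu^{2n(2n-1)}\wt{P}_\nu^{\vee}(q_\nu^{1-2n}X)$. Using this conjugate self-duality identity of the Hecke polynomials, it recovers $\wt{P}_\nu$ by twisting by $\chi_{\on{cyc}}^{2n-1}$ and then passing to the dual determinant.

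\emph{Your route.} You precompose with conjugation by a lift $\tilde c$ of $c$ to $\Gal(F_S/F^+)$, where $F_S$ is the maximal extension of $F$ unramified outside $S$. This is legitimate because $S=S^c$ makes $F_S/F^+$ Galois. The choice of lift is irrelevant, since characteristic polynomials are conjugation invariant. Also, you do not need $\wt{i}$ to be an involution: just apply the identity to $\nu^c\notin S$.

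\emph{Comparison.} Your argument is more elementary for the corollary itself. It needs neither the polarization identity nor the twisting and duality constructions for determinants, only the fact that precomposing a determinant with a continuous group automorphism gives a determinant. The paper's route stays inside $\Gal_{F,S}$. As a by-product it records the relation between $\wt{i}\circ D_0$ (later called $\wt{D}_c$) and $\wt{D}$ via twist and dual, which is exactly what Lemma \ref{lemma: anti + inf char} uses. In your construction $\wt{D}_c$ is simply the $\tilde c$-conjugate of $D$, so that later relation would still require the identity $\wt{P}_{\nu}(X)=q_\nu^{2n(2n-1)}\wt{P}_{\nu^c}^{\vee}(q_\nu^{1-2n}X)$.

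Your fallback, redoing the proof of Theorem \ref{determinant} for Borel--Moore homology, is the paper's first-mentioned proof. Your uniqueness and localization arguments match the paper's.
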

\begin{proof}
    This follows from the proof of \cite[Theorem 6.5.3]{caraiani2020shimura} like Theorem \ref{determinant} did. However, let us sketch another argument as it is a useful way of constructing ``new'' determinants from the old ones. Base--changing the result of Theorem \ref{determinant} with respect to $\wt{i}: \wt{\T}^S(\wt{K}^p) \rightarrow \wt{\T}^S_{\rm{BM}}(\wt{K}^p)$, we get a determinant with characteristic polynomials for $\on{Frob}_\nu$ given by $\wt{i}(\wt{P}_\nu(X))=\wt{P}_{\nu^c}(X)= q_\nu^{2n(2n-1)}\wt{P}_\nu^{\vee}(q_\nu^{1-2n}X)$ (see \cite[Section 2.2.19]{10AuthorPaper}). Twisting by $\chi_{\on{cyc}}^{2n-1}$ gives us a determinant with characteristic polynomials corresponding to $\wt{P}_\nu^{\vee}(X)$.\footnote{We refer the reader to \cite[Section 3.4]{quast2026deformations} for some details behind twisting/dual of determinants.} Taking the dual determinant (this can be done for pseudocharacters, so for determinants as well) gives us the desired condition.
     The second statement follows just as Corollary \ref{loc at max det}.
\end{proof}
Furthermore, the results of \cite{caraiani2020shimura} also imply the following result.\footnote{We remove the assumption that $p$ splits completely in $F$ which was needed in \textit{loc.\,cit.} by imposing that $\mf{m}$ is decomposed generic. With this assumption, \cite{caraiani2020shimura} implies the statement of Theorem \ref{theorem: Galois representation for GLn} -- we include an argument for the sake of completeness.}
\begin{theorem}[Section 6.7 of \cite{caraiani2020shimura}]
\label{theorem: Galois representation for GLn}
    Let $\mf{m}$ be a non--Eisenstein decomposed generic maximal ideal of $\T^S(K^p)$. Then there exists a unique (up to isomorphism) continuous semisimple $n$--dimensional Galois representation $\rho: \on{Gal}_{F,S} \rightarrow \GL_n(\T^S(K^p)_{\mf{m}})$ such that for any prime $\nu \notin S$ 
    $$\det(X-\rho(\on{Frob}_{\nu}))= P_{\nu}(X) \ .
    $$
\end{theorem}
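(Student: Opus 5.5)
The plan is to first build a continuous $n$--dimensional determinant $D$ on $\Gal_{F,S}$ valued in $\T^S(K^p)_{\mf{m}}$ with $D(X-\on{Frob}_\nu)=P_\nu(X)$. Then, because $\ol{\rho}_{\mf{m}}$ is absolutely irreducible, we will lift $D$ to a genuine representation using Chenevier's theory. Uniqueness is the easy part. By Chebotarev, the $\on{Frob}_\nu$ for $\nu\notin S$ are dense in $\Gal_{F,S}$, so the characteristic polynomials determine the determinant, hence the trace. For residually absolutely irreducible representations over a complete local ring, the trace determines the representation up to conjugation (Carayol / \cite[Theorem 2.22]{Che14}).

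For existence, we reduce to finite level as in the proof of Theorem \ref{determinant}. Write $\T^S(K^p)_{\mf{m}}$ as the inverse limit of the images $\T^S(K_pK^p,m)_{\mf{m}}$ of $\T^S$ acting on $\bigoplus_i \HH^i(X^G_{K_pK^p},\mO_L/\varpi_L^m)_{\mf{m}}$. It then suffices, by \cite[Example 2.32]{Che14}, to produce compatible continuous determinants into each of these finite rings with the prescribed Frobenius characteristic polynomials. This is where Scholze's construction \cite[Corollary 5.1.11]{Sch15} enters. Scholze produces determinants only into the quotient of the Hecke algebra by a nilpotent ideal $I$ with $I^N=0$ for some uniform $N$, and this nilpotent ambiguity is exactly what \cite[\S 6.7]{caraiani2020shimura} removes.

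To remove the nilpotent ideal, we will argue through the unitary group. The \emph{first} ingredient is Corollary \ref{loc at max det} (or its Borel--Moore analogue, Corollary \ref{comp. supp. det.}), which gives a genuine $2n$--dimensional determinant $\wt{D}$ valued in $\wt{\T}^S_{\on{BM}}(\wt{K}^p)_{\wt{\mf{m}}}$. The \emph{second} ingredient is the map $\mc{S}:\wt{\T}^S_{\on{BM}}(\wt{K}^p)_{\wt{\mf{m}}}\to\T^S(K^p)_{\mf{m}}$ constructed in \S\ref{subsection: big Hecke}; this map uses the decomposed generic vanishing, and this is where that hypothesis replaces the splitting assumption of \cite{caraiani2020shimura}. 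Pushing $\wt{D}$ forward along $\mc{S}$ gives a $2n$--dimensional continuous determinant $\mc{S}_*\wt{D}$ valued in $\T^S(K^p)_{\mf{m}}$. Its Frobenius polynomials are
\[
P_\nu(X)\,q_\nu^{n(2n-1)}P_{\nu^c}^\vee(q_\nu^{1-2n}X).
\]
Its residual determinant is that of $\ol{\rho}_{\mf{m}}\oplus\ol{\rho}_{\mf{m}}^{c,\vee}(1-2n)$.

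The main obstacle is splitting $\mc{S}_*\wt{D}$ into the two factors of dimension $n$. The plan is to use decomposed genericity to guarantee that $\ol{\rho}_{\mf{m}}$ and $\ol{\rho}_{\mf{m}}^{c,\vee}(1-2n)$ are non-isomorphic. Indeed, at a decomposed generic prime $\ell$ the Frobenius eigenvalues of the two pieces cannot all coincide, since their ratios would then be forced into the excluded set $\{\ell\}$. With the two pieces distinct and each absolutely irreducible, we can apply Chenevier's factorization of residually multiplicity-free determinants \cite[\S 2]{Che14}: over the henselian local ring $\T^S(K^p)_{\mf{m}}$, a determinant with residual determinant $\ol{D}_1\ol{D}_2$, with $\ol{D}_1\neq\ol{D}_2$ absolutely irreducible, factors uniquely as $D_1D_2$ lifting the two factors. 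We take $D:=D_1$, the factor lifting $\ol{\rho}_{\mf{m}}$.

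It remains to check that $D(X-\on{Frob}_\nu)=P_\nu(X)$, and we expect this to be the delicate point. We would deduce it by comparing with Scholze's determinant modulo the nilpotent ideal $I$. Modulo $I$, the factorization is also unique, and there $D_1$ must agree with Scholze's determinant, so $D_1(X-\on{Frob}_\nu)\equiv P_\nu(X)$ mod $I$. To lift this congruence to an equality, we would use the fact that $D_1(X-\on{Frob}_\nu)$ divides the known product polynomial and is congruent to $P_\nu$ modulo the nilpotent ideal. The two factors are residually coprime at Frobenii forming a dense set, so Hensel's lemma forces $D_1(X-\on{Frob}_\nu)=P_\nu(X)$ on that dense set, and continuity extends the identity everywhere. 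Finally, since $\ol{D}$ is the determinant of the absolutely irreducible $\ol{\rho}_{\mf{m}}$, \cite[Theorem 2.22]{Che14} gives a representation $\rho:\Gal_{F,S}\to\GL_n(\T^S(K^p)_{\mf{m}})$ with determinant $D$. Continuity of $\rho$ follows from continuity of $D$.
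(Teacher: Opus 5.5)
You set things up as the paper does: uniqueness via Chebotarev and \cite[Theorem 2.22]{Che14}, reduction to compatible determinants on the finite rings $\T^S(K_pK^p,i,m)_{\mf m}$, and pushing the Borel--Moore determinant of Corollary~\ref{comp. supp. det.} forward along $\mc S$, with decomposed genericity entering through the construction of $\mc S$. The step where you extract the $n$-dimensional factor, however, has a genuine gap.

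First, decomposed genericity does not imply $\ol\rho_{\mf m}\not\cong\ol\rho_{\mf m}^{c,\vee}(1-2n)$. It constrains the ratios of the eigenvalues $\alpha_i$ of $\ol\rho_{\mf m}(\Frob_\nu)$ among themselves. An isomorphism only requires $\{\alpha_i\}=\{\ell^{2n-1}\beta_j^{-1}\}$, where the $\beta_j$ are the eigenvalues at $\nu^c$, and this is compatible with genericity at both $\nu$ and $\nu^c$. For example, take $n$ even and $\ol\rho_{\mf m}=\bar r\otimes\ol\chi_{\on{cyc}}^{-n/2}$ with $\bar r^{c,\vee}\cong\bar r\otimes\ol\chi_{\on{cyc}}^{\,n-1}$ (e.g.\ reductions of Galois representations of conjugate self-dual cohomological automorphic representations). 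Then $\ol\rho_{\mf m}\cong\ol\rho_{\mf m}^{c,\vee}(1-2n)$, and nothing prevents $\ol\rho_{\mf m}$ from being decomposed generic.

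Second, even when $\ol D_1\neq\ol D_2$, residual multiplicity-freeness does not give a factorization $D=D_1D_2$ over the henselian Hecke algebra $A$. Chenevier's results only make $A[\Gal_{F,S}]/\ker D$ a generalized matrix algebra, and $D$ is a product of two determinants only modulo the reducibility ideal. That ideal is nonzero in general: residually reducible, generically irreducible families, as in Eisenstein congruences, are exactly this situation. Showing that it vanishes for $\mc S_*\wt D$ is essentially the content of the theorem.

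Third, even granting a factorization, your identification $D_1(X-\Frob_\nu)=P_\nu(X)$ only works where $\ol P_\nu$ is coprime to the reduction of $q_\nu^{n(2n-1)}P_{\nu^c}^{\vee}(q_\nu^{1-2n}X)$. That is a clopen condition on $\Frob_\nu$ which fails on the open subgroup $\ker\bigl(\ol\rho_{\mf m}\oplus\ol\rho_{\mf m}^{c,\vee}(1-2n)\bigr)$. So it covers a positive-density but non-dense set of primes. Continuity cannot carry the identity to the remaining primes, because $\nu\mapsto P_\nu$ is not a priori given by a continuous function on $\Gal_{F,S}$; producing such a function is precisely the goal.

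The paper does not try to split $\mc S_*\wt D$ directly. After the same reduction, it runs the argument of \cite[\S 6.7, Proposition 6.7.1]{caraiani2020shimura} at each finite level. Their Theorem 6.6.6, which needs $p$ split completely, is replaced by Corollary~\ref{comp. supp. det.} together with the $\mc S$-compatible maps $\wt{\T}^S_{\on{BM}}(\wt K_p\wt K^p,i,m)_{\wt{\mf m}}\to\T^S(K_pK^p,i,m)_{\mf m}$ coming from \eqref{eq: 10AP 2.4.4 on cohomology in degree i} and \cite{CS24,KoshikawaGeneric}. That argument yields the $n$-dimensional determinant with the prescribed Frobenius polynomials without any residual distinctness hypothesis. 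It must, since the theorem also covers the case $\ol\rho_{\mf m}\cong\ol\rho_{\mf m}^{c,\vee}(1-2n)$. To repair your proof you would need to import that mechanism, rather than factor a single $2n$-dimensional determinant.
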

\begin{proof}
Since $\mf{m}$ is non--Eisenstein, by \cite[Theorem 2.22(i)]{Che14} it suffices to construct a continuous $n$--dimensional determinant $D: \mathcal{O}_L[\Gal_{F,S}]\rightarrow \T^S(K^p)_{\mf{m}}$ such that for all $\nu \notin S$
$$D(X-\on{Frob}_{\nu})= P_{\nu}(X) \ .
$$
It is again sufficient to construct a continuous determinant into each finite ring $$\T^S(K_pK^p, i,m)_{\mf{m}}:=\on{im}(\T^S_{\mf{m}} \rightarrow \End(\HH^i(X_{K_pK^p}^G, \mathcal{O}_L/\varpi_L^m)_{\mf{m}}))$$
for $K_p$ sufficiently small, and take limits (recall from \eqref{equation: big Hecke algebra for G} our definition of $\T^S(K^p)$). Such determinants are constructed in \cite[Section 6.7, Proposition 6.7.1]{caraiani2020shimura}, with an important wrinkle. In \textit{loc.\,cit.} they assume that $p$ splits completely in $F$ in order to appeal to their Theorem 6.6.6. In our setting, the role of Theorem 6.6.6 is played by the argument surrounding equation \eqref{eq: 10AP 2.4.4 on cohomology in degree i} in Section 2.1, which upon localizing at a non--Eisenstein \textit{decomposed generic} maximal ideal (and using \cite{CS24} and \cite{KoshikawaGeneric}) yields a natural morphism commuting with $\mc{S}: \wt{\T}^S_{\wt{\mf{m}}}\rightarrow \T^S_{\mf{m}}$:\footnote{We remind the reader that we always assume $K_pK^p=\wt{K}_p\wt{K}^p \cap G(\A_{F^+}^\infty)$, i.e. we consider only levels $\wt{K}$ which decompose well with respect to $P$.}
$$\wt{\T}^S_{\on{BM}}(\wt{K}_p\wt{K}^p, i,m)_{\wt{\mf{m}}}:=\on{im}(\wt{\T}^S_{\wt{\mf{m}}} \rightarrow \End(\HH^i_c(X_{\wt{K}_p\wt{K}^p}^{\wt{G}}, \mathcal{O}_L/\varpi_L^m)_{\wt{\mf{m}}})) \rightarrow \T^S(K_pK^p,i,m)_{\mf{m}} \ .
$$
Together with Corollary \ref{comp. supp. det.}, this is enough to replace Theorem 6.6.6 of \textit{loc.\,cit.} and proceed as they do in Section 6.7 to prove the result.
\end{proof}
We have the following consequence of the previous results.
\begin{corollary}
\label{corollary: big Hecke algebras are Noetherian}
    Both $\wt{\T}^S_{?}(\wt{K}^p)_{\wt{\mf{m}}}$ for $?=\emptyset, \on{BM}$ and $\T^S(K^p)_{\mf{m}}$ are complete local Noetherian rings.
\end{corollary}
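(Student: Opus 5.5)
The plan is to use the standard fact that a profinite (pseudo)compact local ring which is topologically generated by finitely many elements over $\mO_L$ is Noetherian. Locality and completeness are already known from Section \ref{subsection: big Hecke}: each localization is $\mf{m}$-adically complete and separated with finite residue field. So the real task is Noetherianity. The tool for this is the determinants constructed above, via Chenevier's finiteness results for determinants of profinite groups satisfying Mazur's $p$-finiteness condition.

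\textbf{Step 1 (universal deformation ring of the residual determinant).} $\Gal_{F,S}$ satisfies the $p$-finiteness condition, since $S$ is finite. By \cite[Proposition 3.7 / Example 3.6]{Che14}, for any continuous residual determinant $\bar D:k[\Gal_{F,S}]\to k'$ the universal deformation ring $R_{\bar D}$ is a complete local Noetherian $\mO_L$-algebra (after enlarging $L$ if needed). Apply this to the reduction modulo the maximal ideal of:
\begin{itemize}
\item[(a)] $\wt D$ from Corollary \ref{loc at max det}, valued in $\wt{\T}^S(\wt K^p)_{\wt{\mf m}}$;
\item[(b)] $\wt D$ from Corollary \ref{comp. supp. det.}, valued in $\wt{\T}^S_{\on{BM}}(\wt K^p)_{\wt{\mf m}}$;
\item[(c)] $\det\rho$ from Theorem \ref{theorem: Galois representation for GLn}, valued in $\T^S(K^p)_{\mf m}$.
\end{itemize}
Universality then gives local $\mO_L$-algebra maps $R_{\bar D}\to \wt{\T}^S_?(\wt K^p)_{\wt{\mf m}}$ and $R_{\bar D}\to\T^S(K^p)_{\mf m}$, through which the respective determinants factor. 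These maps are continuous because the determinants are continuous.

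\textbf{Step 2 (surjectivity).} Show each such map is surjective. The image contains the coefficients of every characteristic polynomial $D(X-\Frob_\nu)$ for $\nu\notin S$.
\begin{itemize}
\item For $G$ these coefficients are those of $P_\nu(X)$. Since $q_\nu$ is a unit, they recover $T_{\nu,i}$, and $T_{\nu,n}^{-1}$ is the inverse of a unit in the image.
\item For $\wt G$, the subalgebra $\wt{\T}^S$ is by definition generated by the coefficients of the $\wt P_\nu$.
\end{itemize}
Hence the image contains the image of the abstract algebra $\T^S$ (respectively $\wt{\T}^S$), which is dense by construction of the big Hecke algebras, whether as a closure of the image or as an inverse limit of finite images. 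The image of a compact (complete local Noetherian, $\mf m$-adically compact) ring under a continuous map into a Hausdorff ring is compact, hence closed. So the map is surjective, and a quotient of a complete local Noetherian ring is complete local Noetherian.

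\textbf{Main obstacle.} The delicate point is topological. One must check that the map from $R_{\bar D}$ is continuous for the profinite topology and that the profinite and $\mf m$-adic topologies on the localized Hecke algebra are compatible. That is, the closed image in the profinite topology equals the whole localized ring, and this ring coincides with $\varprojlim \wt{\T}/\wt{\mf m}^n$ as recalled before Corollary \ref{loc at max det}. I would handle this by working at each finite level: there the images $\wt{\T}^S(\wt K_p\wt K^p,m)_{\wt{\mf m}}$ are finite, $R_{\bar D}$ surjects onto each of them by the argument above, and surjectivity onto the inverse limit follows from compactness of $R_{\bar D}$ (Mittag-Leffler for compact sets). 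Noetherianity then yields a posteriori that the profinite topology equals the $\mf m$-adic one.
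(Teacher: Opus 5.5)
Your proposal is correct and is essentially the paper's argument. The paper likewise maps Chenevier's complete local Noetherian pseudodeformation rings \cite{Che14} to the localized Hecke algebras via the Hecke-valued determinants, notes that the images contain the dense image of the abstract Hecke algebra (generated by characteristic polynomial coefficients), and deduces surjectivity from compactness. Your finite-level treatment of the topologies (surjecting onto each $\wt{\T}^S_?(\wt{K}_p\wt{K}^p,m)_{\wt{\mf{m}}}$ and passing to the limit) is a careful elaboration of a point the paper passes over quickly, not a different route.
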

\begin{proof}
    This follows using the (complete local Noetherian) pseudodeformation rings constructed in \cite[Proposition 3.3, Example 3.6, Proposition 3.7]{Che14}. Indeed, we get a continuous map from the appropriate pseudodeformation rings to the Hecke algebras. The images are dense since the respective subalgebras generated by coefficients of characteristic polynomials are dense by the definition of the Hecke algebras. Therefore, the surjectivity of the maps from pseudodeformation rings follows from compactness of the pseudodeformation rings. Hence all the Hecke algebras in question are quotients of Noetherian rings and so are themselves Noetherian. 
\end{proof}

The previous corollary allows us to consider rigid versions of the localized Hecke algebras for $\wt{G}$ and $G$ as defined in \cite[Section 7.1]{de1995crystalline} and used in \cite{DPS25} and \cite{PasQua25}. Namely, for any complete Noetherian local $\mO_L$-algebra $R$ with finite residue field, we let $R^{\rm{rig}} := \mO((\on{Spf} R)^{\on{rig}})$; note that there is a natural map $R[1/p] \to R^{\on{rig}}$ which is dense. 

The continuous action of $\wt{\T}^S(\wt{K}^p)_{\wt{\mf{m}}}$ on $\wt{\HH}^i(\wt{K}^p, L)_{\wt{\mf{m}}}^{\tG-\la}$ extends to a continuous action of $\wt{\T}^S(\wt{K}^p)_{\wt{\mf{m}}}^{\rm{rig}}$ (see for example \cite[3.1 + Proposition 3.8]{BHSAnnalen}, applied to the $p$-torsion-free quotient of $\wt{\T}^S(\wt{K}^p)_{\wt{\mf{m}}}$). Similarly, the continuous action of $\T^S(K^p)_{\mf{m}}$ on $\wt{\HH}^i(K^p, L)_{\mf{m}}^{G-\la}$ extends to a continuous action of $\T^S(K^p)_{\mf{m}}^{\rm{rig}}$. The morphisms $\wt{i}$ and $\mc{S}$ between Hecke algebras also extend to continuous morphisms between rigid--analytic versions of these rings. Furthermore, the Hecke equivariance of the Poincar\'e duality in Proposition \ref{prop:poincare for locally analytic} extends to the Hecke equivariance for rigid--analytic Hecke algebras via $\wt{i}$; similarly, the embedding given by Theorem \ref{theorem: G-la in G tilde la} is equivariant for the actions of the rigid--analytic Hecke algebras via $\mc{S}$. These last two claims follow from the density of $R[1/p] \to R^{\on{rig}}$ mentioned above.

We are highlighting all this because generalized Hodge-Tate-Sen characters constructed by \cite{DPS25} and \cite{PasQua25} are valued in rigid--analytic versions of Hecke algebras. Before proving that the infinitesimal action on $\wt{\HH}^i_{(c)}(\wt{K}^p, L)^{\wt{G}-\on{la}}_{\wt{\mf{m}}}$ is given by an appropriate character $Z(\wt{\mf{g}}) \rightarrow \wt{\T}^S_{(\on{BM})}(\wt{K}^p)_{\wt{\mf{m}}}^{\on{rig}}$, let us record the following consequences of Section \ref{section: Setup}.
\begin{corollary}\label{cor: transfer of inf act rigid}
    \begin{enumerate}[label=(\arabic*)]
        \item Suppose that, for all $j$, the action of $Z(\wt{\mf{g}})$ on $(\wt{\HH}^{j}_{\wt{i}(\wt{\mf{m}})})^{\wt{G}-\on{la}}$ is given by some character $\wt{\zeta}: Z(\wt{\mf{g}}) \rightarrow \wt{\T}^S(\wt{K}^p)_{\wt{i}(\wt{\mf{m}})}^{\on{rig}}$. Then, for all $j$, the action of $Z(\wt{\mf{g}})$ on $(\wt{\HH}^j_c)_{\wt{\mf{m}}}^{\wt{G}-\on{la}}$ is given by $\wt{i} \circ \wt{\zeta}\circ \iota:Z(\wt{\mf{g}}) \rightarrow \wt{\T}^S_{\on{BM}}(\wt{K}^p)_{\wt{\mf{m}}}^{\on{rig}}$, where $\iota:Z(\wt{\mf{g}})  \rightarrow Z(\wt{\mf{g}}) $ is the map induced by the anti--involution $\wt{\mf{g}}\rightarrow\wt{\mf{g}}$ mapping $x \mapsto-x$.
        \item Suppose that $Z(\wt{\mf{g}})$ acts on $\wt{\HH}^*_c(\wt{K}^p,L)_{\wt{\mf{m}}}^{\wt{G}-\on{la}}$ by some character $\wt{\zeta}: Z(\wt{\mf{g}}) \rightarrow \wt{\T}_{\on{BM}}^{S}(\wt{K}^p)_{\wt{\mf{m}}}^{\on{rig}}$. Let $\HC^{\overline{P}}: Z(\wt{\mf{g}}) \hookrightarrow Z(\mf{g})$ denote the Harish-Chandra morphism with respect to $\ol{\mf{p}} = \on{Lie} \ol{P}$. Then $\HC^{\overline{P}}(Z(\wt{\mf{g}}))$ acts on $\wt{\HH}^*(K^p,L)_{\mf{m}}^{G-\on{la}}$ by $\mc{S} \circ \wt{\zeta}\circ (\HC^{\overline{P}})^{-1}: \HC^{\overline{P}}(Z(\wt{\mf{g}})) \rightarrow \T^S(K^p)_{\mf{m}}^{\on{rig}}.$ 
    \end{enumerate}
\end{corollary}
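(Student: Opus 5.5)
For part (1) the plan is to pass everything through the Poincar\'e duality spectral sequence of Proposition \ref{prop:poincare for locally analytic}(2), localized at $\wt{\mf{m}}$ on the Borel--Moore side. By Proposition \ref{prop:poincare for locally analytic}(1), the strong dual of $(\wt{\HH}^j)^{\wt{G}-\on{la}}_{\wt{i}(\wt{\mf{m}})}$ is $(\wt{\HH}_j)_{\wt{i}(\wt{\mf{m}})}\otimes_{L[[\wt{K}_p]]}D(\wt{K}_p)$. Under the anti-involution $g\mapsto g^{-1}$, which restricts to $\iota$ on $Z(\wt{\mf{g}})\subset D(\wt{K}_p)$, the hypothesis says that $z\in Z(\wt{\mf{g}})$ acts on this right module by $\wt{\zeta}(\iota(z))$.

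Next we use the fact that $Z(\wt{\mf{g}})$ is central in $D(\wt{K}_p)$ (it is $\wt{K}_p$-conjugation invariant, and $U(\wt{\mf g})$ is dense in the relevant sense). Consequently, for any $D(\wt{K}_p)$-module $M$, the action of $z$ on $\Ext^i_{D(\wt{K}_p)}(M, D(\wt{K}_p))$ through $M$ agrees with its action through right multiplication on the $D(\wt{K}_p)$ factor. Hence on $E_2^{i,j}$ the element $z$ acts by the image of $\wt{\zeta}(\iota(z))$ under the Hecke action. We need to be careful that the Hecke actions are matched via $\wt{i}$, and that localizing the abutment at $\wt{\mf{m}}$ corresponds to localizing the $E_2$ terms at $\wt{i}(\wt{\mf{m}})$. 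With the rigid extension of $\wt{i}$ discussed before the corollary, $z - \wt{i}(\wt{\zeta}(\iota(z)))$ then kills every $E_2^{i,j}$ contributing to $(\wt{\HH}^{\rm BM}_k)_{\wt{\mf m}}\otimes D(\wt{K}_p)$.

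This is the main obstacle: killing each graded piece only shows that $z-\wt{i}\wt{\zeta}\iota(z)$ acts nilpotently on the abutment, not that it acts by zero. I would try first to show the spectral sequence degenerates. The plan is to use Theorem \ref{vanishing}(1): only $\wt{\HH}_d$ is nonzero after localizing at $\wt{i}(\wt{\mf{m}})$. That theorem is stated for cohomology; we transport it to homology by Proposition \ref{dualities}(2), and then to the $D(\wt{K}_p)$-tensored versions by faithful flatness. With only one row $j=d$ nonzero, the sequence collapses, giving isomorphisms $\Ext^{d-k}(\wt{\HH}_d\otimes D, D)\simeq \wt{\HH}^{\rm BM}_{k}\otimes D$ that are equivariant for the Hecke and centre actions. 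Thus $z$ acts on $(\wt{\HH}^{\rm BM}_k)_{\wt{\mf m}}\otimes D(\wt{K}_p)$ exactly by $\wt{i}\wt{\zeta}\iota(z)$. Dualizing back with Proposition \ref{prop:poincare for locally analytic}(1) gives the claim for $(\wt{\HH}^j_c)^{\wt G-\on{la}}_{\wt{\mf m}}$. The involution $\iota$ appears once in the first dualization and once more in the last, and the bookkeeping is arranged so that exactly one $\iota$ survives, as in the statement; this needs a careful check.

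For part (2), Theorem \ref{theorem: G-la in G tilde la} gives a $\wt{\T}^S_{\on{BM}}(\wt K^p)_{\wt{\mf m}}^{\on{rig}}\times U(\mf g)$-equivariant embedding $V:=\wt{\HH}^i(K^p,L)^{G-\on{la}}_{\mf m}\hookrightarrow \wt{\HH}^{i+1}_c(\wt K^p,L)^{\wt G-\on{la}}_{\wt{\mf m}}$, with image killed by $\ol{\mf n}$. Here the Hecke algebra acts on $V$ via $\mc{S}$. For $z\in Z(\wt{\mf g})$, write $z\in \HC^{\ol P}(z)+\ol{\mf n}\,U(\wt{\mf g})$ (or $U(\wt{\mf g})\ol{\mf n}$, depending on the convention in \cite{Emerton_Jacquet_I}; we choose the side so that the error term kills $\ol{\mf n}$-invariants). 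Then on $V$ the element $z$ acts as $\HC^{\ol P}(z)\in Z(\mf g)$, while by hypothesis it also acts as $\wt\zeta(z)$, i.e.\ as $\mc S(\wt\zeta(z))$. Since $\HC^{\ol P}$ is injective, we obtain the formula $\mc S\circ\wt\zeta\circ(\HC^{\ol P})^{-1}$ on $\HC^{\ol P}(Z(\wt{\mf g}))$, for every $i\le d-1$, and hence for all $i$ since $V=0$ for $i>d-1$.
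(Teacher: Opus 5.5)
Your proposal follows the paper's proof. For (1) it uses the Poincar\'e duality spectral sequence of Proposition~\ref{prop:poincare for locally analytic}(2) together with the identification of strong duals in Proposition~\ref{prop:poincare for locally analytic}(1); for (2) it uses the decomposition $U(\wt{\mf g})=\mf n U(\ol{\mf p})\oplus U(\mf g)\oplus U(\wt{\mf g})\ol{\mf n}$ applied to the $\ol{\mf n}$-annihilated embedding of Theorem~\ref{theorem: G-la in G tilde la}. Your use of Theorem~\ref{vanishing}(1) to collapse the spectral sequence gives vanishing, rather than mere nilpotence, of $z-\wt i\wt\zeta\iota(z)$ on the abutment, which makes explicit a point the paper's short argument leaves implicit.

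On the $\iota$-bookkeeping you flagged: $\iota$ enters exactly once, when $(\wt{\HH}_j)_{\wt i(\wt{\mf m})}\otimes_{L[[\wt K_p]]}D(\wt K_p)$, on which $z$ acts on the right by $\wt\zeta(z)$, is turned into a left module to form the $\Ext$. Centrality of $z$ then makes its right action on $\Ext$ equal to precomposition with the left action $\wt\zeta(\iota(z))$. Passing between the right action on a strong dual and the left action on the space itself introduces no $\iota$.
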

\begin{proof}
    \begin{enumerate}[label=(\arabic*)]
        \item This follows from Proposition \ref{dualities} (2) and Proposition \ref{prop:poincare for locally analytic} (2). Indeed, we have that the right action of $Z(\wt{\mf{g}}) $ on $\wt{\HH}_{j,\wt{i}(\wt{\mf{m}})}\otimes_{L[[\wt{K}_p]]}D(\wt{K}_p)$ is given by $\wt{\zeta}$ and using part (2) of Proposition \ref{prop:poincare for locally analytic}, we get the right action of $Z(\wt{\mf{g}}) $ on $\wt{\HH}^{\rm{BM}}_{j,\wt{\mf{m}}}\otimes_{L[[\wt{K}_p]]}D(\wt{K}_p)$ is given by $\wt{i}\circ \wt{\zeta}\circ \iota$. Therefore, the left action on $\wt{\HH}^{j, \wt{G}-\on{la}}_{c,\wt{\mf{m}}}$ is as claimed.
        \item  This follows from \cref{theorem: G-la in G tilde la} and the definition of $\HC^{\overline{P}}$ (see also \cite[\textsection 1.3]{Emerton_Jacquet_I}): it is defined using a decomposition $U(\wt{\mf{g}})= \mf{n}U(\ol{\mf{p}}) \oplus U(\mf{g})\oplus U(\wt{\mf{g}})\ol{\mf{n}}$ where $\mf{n}, \ol{\mf{p}}, \ol{\mf{n}}$ are Lie algebras of $U(F_p^+), \ \overline{P}(F_p^+)$ and $\overline{U}(F_p^+)$, respectively, and noting that $Z(\wt{\mf{g}}) \subset U(\mf{g}) \oplus U(\wt{\mf{g}})\ol{\mf{n}}$, $\HC^{\overline{P}}$ denotes the projection to the $U(\mf{g})$-factor.
    \end{enumerate}
\end{proof}
In the next subsection, we show that the infinitesimal action on $\wt{\HH}^*_{c}(\wt{K}^p, L)^{\wt{G}-\on{la}}_{\wt{\mf{m}}}$ is given by an appropriate generalized Hodge-Tate-Sen character $Z(\wt{\mf{g}}) \rightarrow \wt{\T}^S_{\rm{BM}}(\wt{K}^p)_{\wt{\mf{m}}}^{\on{rig}}$ constructed by the recipe of \cite{PasQua25}.\footnote{As mentioned in footnote \ref{footnote: functorial}, this construction is naturally functorial in $L$.}

\subsection{Infinitesimal actions on $\wt{\HH}^d(\wt{K}^p)_{\wt{\mf{m}}}$ and $\wt{\HH}^i_c(\wt{K}^p)_{\wt{\mf{m}}}$}
\label{subsection: inf act on unitary}

We refer the reader to \cite[Sections 2--2.1]{DPS25} for a brief overview of an $L$--group and a $C$--group attached to a reductive group. Since $\wt{G}_{F_{\ol{\nu}}^+} \simeq \GL_{2n}/F_{\ol{\nu}}^+$ and $G_{F_{\ol{\nu}}^+} \simeq (\GL_{n} \times \GL_{n})/ F_{\ol{\nu}}^+$ for any $\ol{\nu} \mid p$ (note $F_{\ol{\nu}}^+ \simeq F_{\nu} \simeq F_{\nu^c}$ where $\nu \mid \ol{\nu}$ in $F$), we shall only consider $C$--groups for $\GL_m$ with $m=n, 2n$ so let us briefly define a $C$--group in this case.
Let $B$ be the standard Borel subgroup of $\GL_m$ and let $\wt{\delta}_m:=(1,t^{-1},\ldots,t^{1-m}): \mathbf G_m \rightarrow \GL_m$.\footnote{This is just one of the possible choices for a cocharacter of $T$: it is chosen so that on the Lie algebras, $(\wt{\delta}_m)_{\ast}(1)=\delta^++(a,\ldots,a)$ for some constant $a$ where $\delta^+=((m-1)/2,\ldots,(1-m)/2)$ is the half sum of positive roots viewed naturally as an element in $\mf{t}=\on{Lie}(T)$. See \cite[2.1]{DPS25}.} The $C$--group ${}^C\GL_m$ is defined to be $\GL_m \rtimes \mathbf G_m$ with multiplication given by $(g,t) \cdot (g', t')=(g \wt{\delta}_m(t)g'\wt{\delta}_m(t)^{-1}, tt')$. We note that this definition does not depend on $\wt{\delta}_m$ (any other choice would differ by a cocharacter valued in the center). Furthermore, we have an isomorphism of algebraic groups (for any choice of $\wt{\delta}_m$)
$$\on{tw}_{\wt{\delta}_{m}}: \ {}^C\GL_{m} \rightarrow \GL_{m} \times \mathbf G_m, \ \ \ (g,t) \mapsto (g\wt{\delta}_{m}(t),t)  \ .
$$
One can play a similar game for the opposite Borel subgroup $\overline{B}$ in which case, for example, we choose $\wt{\delta}_m^-=(t^{1-m},\ldots,t^{-1},1): \mathbf G_m \rightarrow \GL_m$. We shall denote the $C$--group we get for $\overline{B}$ by ${}^C\GL_m^-$. Both constructions will play a part for us.

Now we move onto constructing generalized Hodge-Tate-Sen characters.
Let $\wt{\Theta}$ be a $\GL_{2n}$--pseudocharacter of $\on{Gal}_{F,S}$ attached to $\wt{D}$ valued in $\wt{\T}^S(\wt{K}^p)_{\wt{\mf{m}}}^{\on{rig}}$ given by Corollary \ref{loc at max det} (by \cite{emerson2018comparison} we will not distinguish between pseudocharacters and determinants). As in \cite[Section 8]{PasQua25} we define a ${}^C\GL_{2n}$--pseudocharacter of $\on{Gal}_{F,S}$ valued in $\wt{\T}^S(\wt{K}^p)_{\wt{\mf{m}}}^{\on{rig}}$ $$\wt{\Theta}^C:= \on{tw}_{\wt{\delta}_{2n}}^{-1} \circ ( \wt{\Theta} \boxtimes \chi_{\on{cyc}}).$$
For each finite place $\ol{\nu}\mid p$ in $F^+$ we choose a place $\nu \mid \ol{\nu}$ of $F$ above it. Let $\wt{S}$ denote the set of all chosen places of $F$, so that $S_p(F) = \wt{S} \sqcup \wt{S}^c$. By specializing at each place in $\wt{S}$, the construction from \cite{PasQua25} yields a character
$$\wt{\zeta}^C: Z(\wt{\mf{g}}):= Z(\on{Res}_{F^+/\mathbf Q}\wt{\mf{g}})_{\mathbf Q_p} \cong \otimes_{\nu \in \wt{S}} Z(\on{Res}_{F_{\nu}/\mathbf Q_p}\mf{gl}_{2n})  \rightarrow \wt{\T}^S(\wt{K}^p)_{\wt{\mf{m}}}^{\on{rig}} \ .
$$
\begin{theorem}\label{unitary inf action}
   Let $\wt{\mf{m}}=\mc{S}^*(\mf{m})$ be the pullback under the Satake map of a decomposed generic maximal ideal of $\T^S(K^p)$. Then the action of $Z(\wt{\mf{g}})$ on $\wt{\HH}^d(\wt{K}^p, \mathbf Q_p)_{\wt{\mf{m}}}^{\on{la}}$ is given by $\wt{\zeta}^C$. The same statement applies to $\wt{\HH}^d(\wt{K}^p, \mathbf Q_p)_{\wt{i}(\wt{\mf{m}})}^{\on{la}}$.
\end{theorem}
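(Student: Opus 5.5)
We follow the density strategy of \cite[Sections 9.6--9.10]{DPS25}, as adapted to pseudocharacters in \cite{PasQua25}. The key fact to exploit is that, after localizing at $\wt{\mf{m}}$, the middle degree completed cohomology is the only nonzero degree (Theorem \ref{vanishing} (1)) and is a quotient of $\mc{C}^{\rm{cts}}(\wt{K}_p,L)^{\oplus m}$ (Corollary \ref{corollary: MiddleDegreeOnCompletedLevel} (1)). Dually, $\wt{\HH}_{d,\wt{\mf{m}}}$ embeds into $L[[\wt{K}_p]]^{\oplus m}$, so it is torsion-free. Consequently $M:=\wt{\HH}_{d,\wt{\mf{m}}}\otimes_{L[[\wt{K}_p]]}D(\wt{K}_p)$ embeds into $D(\wt{K}_p)^{\oplus m}$, using flatness from \cite[Theorem 5.2]{ST03}.

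We need to show that for every $z\in Z(\wt{\mf{g}})$ the element $z-\wt{\zeta}^C(z)$ kills $M$, or equivalently kills $\wt{\HH}^{d,\la}_{\wt{\mf{m}}}$ by Proposition \ref{prop:poincare for locally analytic} (1). Since $\wt{\zeta}^C(z)\in \wt{\T}^S(\wt{K}^p)^{\rm{rig}}_{\wt{\mf{m}}}$, we plan to reduce to a statement about locally algebraic vectors. For each irreducible algebraic representation $W$ of $\wt{G}_L$, consider the $W$-isotypic locally algebraic vectors $\Hom_{\wt{K}_p}(W,\wt{\HH}^d_{\wt{\mf{m}}})\otimes W$. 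By Emerton's spectral sequence and the vanishing outside degree $d$, these vectors are computed by classical cohomology $\HH^d(X_{\wt{K}_p\wt{K}^p},\mc{V}_{W^\vee})_{\wt{\mf{m}}}\otimes W$ for $\wt{K}_p$ small.

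At such a classical vector, $Z(\wt{\mf{g}})$ acts through the infinitesimal character of $W$. The Hecke algebra acts through eigensystems $x$ attached to cohomological automorphic representations of $\wt{G}$ of weight $W$. We then need $\wt{\zeta}^C_x$ to equal the infinitesimal character of $W$. By \cite{PasQua25}, $\wt{\zeta}^C$ specializes at $x$ to the character built from the Sen weights of $\wt{\Theta}_x$, which is the pseudocharacter of the Galois representation attached to $x$ (via base change to $\GL_{2n}$). Local--global compatibility at $p$ for these representations, from \cite{Shin14} and the hypothesis $F_0\subset F$, identifies their Hodge--Tate weights with the $C$-group-shifted highest weights of $W$. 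This is exactly the compatibility verified in \cite[Section 9]{DPS25}, and it uses that $\wt{G}$ is split at $p$.

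The remaining step, which we expect to be the main obstacle, is density. The embedding $M\hookrightarrow D(\wt{K}_p)^{\oplus m}$ has to be arranged so that the annihilators of $M$ can be tested on locally algebraic vectors. We would argue as in \cite[Theorem 9.27 and Lemma 9.30]{DPS25}. Because $\wt{\HH}^{d}_{\wt{\mf{m}}}$ is a quotient of an injective $\mc{C}^{\rm{cts}}(\wt{K}_p,L)^{\oplus m}$, the dual $\wt{\HH}_d$ is a $\wt{\T}^S(\wt{K}^p)_{\wt{\mf{m}}}$-stable submodule of a free module. Moreover, the union over $W$ of locally algebraic vectors is dense in the locally analytic vectors of a quotient of $\mc{C}^{\rm{cts}}(\wt{K}_p,L)^{\oplus m}$. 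Hence any element of $\wt{\T}^{\rm{rig}}\otimes Z(\wt{\mf{g}})$ that kills all locally algebraic vectors kills $\wt{\HH}^{d,\la}_{\wt{\mf{m}}}$.

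Two subtleties have to be handled here. First, $\wt{\HH}^d$ is only a quotient of $\mc{C}^{\rm{cts}}(\wt{K}_p,L)^{\oplus m}$, not a direct summand; the torsion-freeness of the dual module, which is all that \cite{DPS25} actually uses, is what should make the argument go through. Second, the rigid Hecke algebra action has to be compatible with the specializations at the points $x$. This uses the density of $R[1/p]\to R^{\rm rig}$, together with the fact that the maximal ideals $x$ arising from classical points are Zariski dense in $\on{Spec}\wt{\T}^S(\wt{K}^p)_{\wt{\mf{m}}}[1/p]$, which is itself a consequence of the locally algebraic density above.

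For $\wt{i}(\wt{\mf{m}})$ the same argument applies verbatim. Theorem \ref{vanishing} (1) and Corollary \ref{corollary: MiddleDegreeOnCompletedLevel} (1) hold equally for $\wt{i}(\wt{\mf{m}})$, and Corollary \ref{loc at max det} provides the determinant at that ideal.
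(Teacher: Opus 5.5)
Your overall strategy matches the paper's. You use density of locally algebraic vectors, Emerton's spectral sequence to pass to classical cohomology, and then Sen weights at classical points via \cite{PasQua25}. But there is a genuine gap where you assert that on $\Hom_{\wt{K}_p}(W,\wt{\HH}^d_{\wt{\mf{m}}})$ the Hecke algebra ``acts through eigensystems $x$ attached to cohomological automorphic representations''.

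The ideal $\wt{\mf{m}}=\mc{S}^*(\mf{m})$ is Eisenstein. So the classical cohomology $\HH^d(X_{\wt{K}_p\wt{K}^p},\mc{V}_{W^\vee})_{\wt{\mf{m}}}$ can contain Eisenstein classes, i.e. contributions of non-cuspidal automorphic representations with cuspidal support on a proper parabolic such as $P$. Your proposal does nothing to exclude them. On that part you lack two things:
\begin{enumerate}
\item Semisimplicity of the Hecke action is not automatic. Franke's description involves derivatives of Eisenstein series in the spectral parameter, on which unramified Hecke operators need not act semisimply.
\item The local--global compatibility you invoke does not apply. \cite[Theorem 2.3.3]{10AuthorPaper}, which rests on \cite{Shin14}, is for cuspidal cohomological $\pi$.
\end{enumerate}
Without semisimplicity, checking $\wt{\zeta}^C_x=\chi_W$ at every point $x$ of the support only shows that $\wt{\zeta}^C(z)-\chi_W(z)$ acts nilpotently on the finite-dimensional space $\Hom_{\wt{K}_p}(W,\wt{\HH}^d_{\wt{\mf{m}}})$, not that it acts by zero. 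Your appeal to Zariski density of classical points does not fix this. That only controls Hecke elements pointwise, i.e. modulo nilpotents, and the comparison scalar $\chi_W(z)$ varies with $W$ anyway.

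The missing idea, which the paper takes from \cite[Section 9.5]{DPS25}, is to test on a thinner dense subspace that only sees cuspidal forms:
\begin{enumerate}
\item Fix a supercuspidal type $\tau$ of a small $\wt{K}_p$, which exists by \cite[Prop.~3.19]{emerton2020density}.
\item By \cite[Corollary 7.7]{DPS25}, the span of the $V\otimes\tau$-isotypic vectors, $\bigoplus_V\Hom_{\wt{K}_p}(V\otimes\tau,-)\otimes V\otimes\tau$, is already dense in $\mc{C}^{\on{la}}(\wt{K}_p,L)$. Hence it is dense in $\wt{\HH}^{d,\on{la}}_{\wt{\mf{m}}}$, via the surjection of Corollary \ref{corollary: MiddleDegreeOnCompletedLevel}(1).
\item Since $\tau$ is smooth, $Z(\wt{\mf{g}})$ still acts on these vectors by $\chi_\lambda$.
\item The multiplicity space $\Hom_{\wt{K}_p}(V\otimes\tau,\wt{\HH}^d_{\wt{\mf{m}}})=\Hom_{\wt{K}_p}(\tau,\HH^d(V^*)_{\wt{\mf{m}}})$ only sees automorphic $\pi$ with $\pi_p$ supercuspidal. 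By Franke's theorem it is therefore a direct sum over cuspidal $\pi$, on each of which the Hecke algebra acts by a scalar.
\end{enumerate}
This gives semisimplicity and puts every eigensystem exactly where \cite[Theorem 2.3.3]{10AuthorPaper} and \cite[Proposition 5.5]{DPS25} apply. Without the type, you would have to handle the Eisenstein classes directly: prove semisimplicity there and compute the Sen weights of the reducible Galois representations attached to them. Your proposal does neither.
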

\begin{proof}
    Let $L/\mathbf Q_p$ be a finite extension containing $[F:\mathbf Q]$ embeddings $F \hookrightarrow L$; therefore $\Res_{F^+/\mathbf Q}\wt{G}$ splits over $L$. It is sufficient to prove the theorem after changing the coefficients to $L$.
    
    The proof follows the same strategy as \cite[Theorem 9.2]{DPS25} and \cite[Theorem 8.2]{PasQua25}. We first note that if either of these spaces are zero there is nothing to prove. Otherwise, we can use Remark \ref{remark: loc at max} to appropriately view $\wt{\mf{m}}$ and $\wt{i}(\wt{\mf{m}})$ as (open) maximal ideals of $\wt{\T}^S(\wt{K}^p)$. We prove the statement for $\wt{\mf{m}}$ as the proof for $\wt{i}(\wt{\mf{m}})$ is the same. As mentioned after Corollary \ref{corollary: big Hecke algebras are Noetherian}, the action of $\wt{\T}^S(\wt{K}^p)_{\wt{\mf{m}}}$ on $\wt{\HH}^d(\wt{K}^p, L)_{\wt{\mf{m}}}$ induces a continuous action of $\wt{\T}^S(\wt{K}^p)_{\wt{\mf{m}}}^{\on{rig}}$ on $\wt{\HH}^d(\wt{K}^p, L)_{\wt{\mf{m}}}^{\on{la}}$.

    We want to show for each $z \in Z(\wt{\mf{g}})$ that $z-\wt{\zeta}^C(z)$ annihilates $\wt{\HH}^d(\wt{K}^p, L)_{\wt{\mf{m}}}^{\on{la}}$. It suffices to prove the vanishing of this difference on a dense subset.  Let $\wt{K}_p\subset \wt{G}(F^+_p)=(\Res_{F^+/\mathbf Q} \wt{G})(\mathbf Q_p)$ be a sufficiently small compact open subgroup. Let $\tau$ be smooth irreducible (finite dimensional) representation of $\wt{K}_p$ on an $L$--vector space which is a supercuspidal type (such a $\tau$ exists by \cite[Prop. 3.19]{emerton2020density}) and $\on{Irr}_{\wt{G}}(L)$ the set of isomorphism classes of irreducible algebraic representations of $(\on{Res}_{F^+/\mathbf Q} \wt{G})_{L}$ (which is a split group).
    By Corollary \ref{corollary: MiddleDegreeOnCompletedLevel} and since passing to locally analytic vectors is exact, we have a continuous surjection $\mc{C}^{\on{la}}(\wt{K}_p, L)^{\oplus k} \twoheadrightarrow \wt{\HH}^d(\wt{K}^p, L)_{\wt{\mf{m}}}^{\on{la}}$. The density statement from \cite[Corollary 7.7]{DPS25} for $\mc{C}^{\on{la}}(\wt{K}_p, L)$ implies that the natural map 
    \begin{equation}\label{eq: density}
    \bigoplus_{V \in \on{Irr}_{\wt{G}}(L)} \Hom_{\wt{K}_p}(V(\tau), \wt{\HH}^d(\wt{K}^p, L)_{\wt{\mf{m}}}^{\on{la}}) \otimes_{L} V(\tau) \rightarrow \wt{\HH}^d(\wt{K}^p, L)_{\wt{\mf{m}}}^{\on{la}}
    \end{equation}
    has dense image. Here $V(\tau):= V \otimes_{L} \tau$ endowed with the diagonal action of $\wt{K}_p$. The morphism (\ref{eq: density}) is equivariant for the $Z( \wt{\mf{g}}) \times \wt{\T}^S(\wt{K}^p)_{\wt{\mf{m}}}^{\on{rig}} $--action and we also note that $V(\tau)$ is a locally analytic $\wt{K}_p$--representation so $$\Hom_{\wt{K}_p}(V(\tau), \wt{\HH}^d(\wt{K}^p, L)_{\wt{\mf{m}}}^{\on{la}})=\Hom_{\wt{K}_p}(V(\tau), \wt{\HH}^d(\wt{K}^p, L)_{\wt{\mf{m}}}) \ .$$ Note that the action of $z \in Z( \wt{\mf{g}})$ on the LHS in (\ref{eq: density}) is via the $V(\tau)$ factor on the right of the tensor product. Since $V$ is an irreducible representation of $(\Res_{F^+/\mathbf Q}\wt{G})_L$, it is of some highest weight $\lambda \in (\mathbf Z^{2n}_+)^{\Hom(F^+, \overline{\mathbf{Q}}_p)}$ with respect to the standard Borel subgroup $(\Res_{F^+/\mathbf Q}B)_L$. Since $\tau$ is smooth, the action of $Z(\wt{\mf{g}})$ on the LHS factor of (\ref{eq: density}) is via the infinitesimal character $\chi_{\lambda}: Z(\wt{\mf{g}}) \rightarrow L$ given by composing the unnormalized Harish-Chandra map (with respect to $(\Res_{F^+/\mathbf Q}B)_L$) $Z(\wt{\mf{g}})_L \rightarrow U(\mf{t})_{L} \xrightarrow[]{\on{ev}_\lambda} L$. Therefore, we need to show that the action of $\wt{\zeta}^C(Z(\wt{\mf{g}}))$ on $\Hom_{\wt{K}_p}(V(\tau), \wt{\HH}^d(\wt{K}^p, L)_{\wt{\mf{m}}})$ is also via $\chi_{\lambda}$.

    We first show that the action of $ \wt{\T}^S(\wt{K}^p)_{\wt{\mf{m}}}^{\on{rig}}$ on $\Hom_{\wt{K}_p}(V(\tau), \wt{\HH}^d(\wt{K}^p, L)_{\wt{\mf{m}}}) \otimes_{L} \overline{\mathbf Q}_p$ is semisimple; that is, this space splits into eigenspaces on which  $ \wt{\T}^S(\wt{K}^p)_{\wt{\mf{m}}}^{\on{rig}}$ acts by a character valued in $\overline{\mathbf Q}_p$.
   The point is that when localized at $\wt{\mf{m}}$, $\Hom_{\wt{K}_p}(V(\tau), \wt{\HH}^d(\wt{K}^p, L)_{\wt{\mf{m}}})$ is related to classical cohomology of the spaces $X_{\wt{K}_p'\wt{K}^p}^{\tG}$ (and thus to automorphic forms). To see this, define $\HH^i(V^\ast) := \varinjlim_{K_p'} \HH^i(X_{K_p'\wt{K}^p}^{\wt{G}}, V^\ast)$. Now recall that $\wt{\HH}^i(\wt{K}^p,L)_{\wt{\mf{m}}} \neq 0$ iff $i =d$ by Theorem \ref{vanishing} (same holds for $\wt{i}(\wt{\mf{m}})$). Thus, Emerton's spectral sequence \cite[Theorem 2.1.5(ii)]{emerton2006interpolation} relating $\wt{\HH}^*(\wt{K}^p, V^*)_{\wt{\mf{m}}} \cong \wt{\HH}^*(\wt{K}^p, L)_{\wt{\mf{m}}} \otimes_L V^*$ and $\wt{\HH}^*(V^{\ast})_{\wt{\mf{m}}}$ degenerates, implying $\HH^d(V^*)_{\wt{\mf{m}}} = \wt{\HH}^d(\wt{K}^p, V^*)_{\wt{\mf{m}}}^{\on{sm}}$. Likewise, since $\tau$ is smooth we have $$\Hom_{\wt{K}_p}(V(\tau), \wt{\HH}^d(\wt{K}^p, L)_{\wt{\mf{m}}})=\Hom_{\wt{K}_p}(\tau, \wt{\HH}^d(\wt{K}^p, V^*)_{\wt{\mf{m}}})=\Hom_{\wt{K}_p}(\tau, \HH^d(V^*)_{\wt{\mf{m}}})$$ where we have again used that $\wt{\HH}^d(\wt{K}^p, V^*)_{\wt{\mf{m}}} \cong \wt{\HH}^d(\wt{K}^p, L)_{\wt{\mf{m}}} \otimes_L V^*$. 
   
   To show semisimplicity, first note $\HH^d(V^*)_{\wt{\mf{m}}}$ is a direct ($\wt{\T}^S(\wt{K}^p)$--equivariant) summand of $\HH^d(V^*)$ hence $\Hom_{\wt{K}_p}(V(\tau), \wt{\HH}^d(\wt{K}^p, L)_{\wt{\mf{m}}}) \otimes_{L} \overline{\mathbf Q}_p$ is a direct ($\wt{\T}^S(\wt{K}^p)$--equivariant) summand of $\Hom_{\wt{K}_p}(\tau, \HH^d(V^{\ast}_{\overline{\mathbf Q}_p}))$. 
   
   Then it suffices to show the Hecke action on $\Hom_{\wt{K}_p}(\tau, \HH^d(V^{\ast})_{\overline{\mathbf Q}_p})$ is semisimple, which we do by following \cite[Section 9.5]{DPS25}. Just as in \cite[Equations (61) and (62)]{DPS25}, by Franke's theorem (fixing an isomorphism $\iota: \overline{\mathbf Q}_p \simeq \mathbf C$):
  \begin{equation}\label{eq:Franke}
  \on{Hom}_{\wt{K}_p}(\tau, \HH^i(V^\ast)) \otimes_{\iota: L\hookrightarrow \C} \mathbf C \simeq \bigoplus_{\pi} \HH^i(\wt{\mf{g}}_{\R}, \mf{t}_{\R}, \pi_{\infty} \otimes_{\C} V_{\mathbf C}^\ast) \otimes_{\C} \Hom_{\wt{K}_p}(\tau_{\mathbf C}, \pi_p) \otimes_{\C} (\pi^{p, \infty})^{\wt{K}^p},
   \end{equation}
   where the sum is taken over all irreducible cuspidal subrepresentations $\pi$ of $\mathcal{A}_{\on{cusp}}$ counted with multiplicities and $\wt{\mf{g}}_{\R}$ denotes the Lie algebra of $(\on{Res}_{F^+/\mathbf Q} \wt{G})(\mathbf R)$ and $\mf{t}_{\R}$ the Lie algebra of a maximal compact subgroup $\wt{K}_{\infty}$ (note that Hecke algebra acts on the last term in the tensor product and it acts by a scalar!).\footnote{We note here that the reductive group $\on{Res}_{F^+/\mathbf Q} \wt{G}$ has no non--trivial rational character, which is why the whole Lie algebra of $\wt{G}(F^+\otimes_{\Q} \mathbf R)$ appears.} Therefore, we have the semisimplicity that we seek  by taking $i=d$ in the equation (\ref{eq:Franke}). 
    
   To prove that for all $z \in Z(\wt{\mf{g}})$ the action of $\wt{\zeta}^C(z)$ on $\Hom_{\wt{K}_p}(V(\tau), \wt{\HH}^d(\wt{K}^p, L)_{\wt{\mf{m}}})$ is via $\chi_{\lambda}(z)$, it suffices to show this after we base--change to $\overline{\mathbf Q}_p$, and so it suffices to work on each nonzero Hecke eigenspace $(\Hom_{\wt{K}_p}(V(\tau), \wt{\HH}^d(\wt{K}^p, L)_{\wt{\mf{m}}})\otimes_{L} \overline{\mathbf Q}_p)[\mf{m}_y]\neq 0$ for any $L$-algebra homomorphism $y: \wt{\T}^S(\wt{K}^p)_{\wt{\mf{m}}}^{\on{rig}} \rightarrow \overline{\mathbf Q}_p$. Recall that, by \cite{PasQua25}, specializing at $y$ gives rise to $\wt{\zeta}^C_y: Z(\wt{\mf{g}})\rightarrow \overline{\mathbf Q}_p$ with the following property: $\wt{\zeta}^C_y$ equals $\zeta^C_{\rho}$ for any admissible representation $\rho:\on{Gal}_{F,S} \rightarrow {}^C \GL_{2n}(\overline{\mathbf Q}_p)$ with pseudocharacter $\Theta_{\rho}=\wt{\Theta}^C_y$. We now argue as in \cite[Theorem 8.2]{PasQua25} that there is such a $\rho$ which also satisfies $\zeta_{\rho}=\chi_{\lambda}$.

   Using (\ref{eq:Franke}), to any such $y$ we can associate a cuspidal automorphic representation $\pi_y=\otimes_{w}' \pi_{y,w}$ of $\wt{G}(\mathbf A_{F^+})$ such that $\wt{\T}^S(\wt{K}^p)$ acts on $(\pi_y^S)^{\wt{K}^S}$ via $\iota \circ y$. Furthermore, $\pi_y$ is $\iota(V^*)$--cohomological as it appears in (\ref{eq:Franke}). Therefore, by \cite[Theorem 2.3.3]{10AuthorPaper}, there is a Galois representation $\rho_y: \on{Gal}_{F,S}\rightarrow \GL_{2n}(\overline{\mathbf Q}_p)$ such that $D_{\rho_y}= \wt{D}_y$ (since by \textit{loc.\,cit.} the determinants agree on a dense subset).\footnote{We denote by $\wt{D}_y$ the specialization of the determinant given by Corollary \ref{loc at max det} at $y$.} Hence $\Theta_{\rho_y}=\wt{\Theta}_y$ and so $\rho_y^C:= \on{tw}_{\wt{\delta}_{2n}}^{-1}\circ (\rho_y \boxtimes \chi_{\on{cyc}}) : \on{Gal}_{F,S}\rightarrow {}^C\GL_{2n}(\overline{\mathbf Q}_p)$ has $\Theta_{\rho^C_y}= \wt{\Theta}^C_y$.

   Finally, we note that $\zeta_{\rho_y^C}=\chi_{\lambda}$ where $\lambda \in (\Z_+^{2n})^{\Hom(F^+, \overline{\mathbf Q}_p)}$ is the highest weight of $V$, as desired. 
   This follows from \cite[Theorem 2.3.3 (b)]{10AuthorPaper}\footnote{There is a typo in \textit{loc.\,cit.} (compare with \cite[pg. 5]{BLGGT11}) and their $\wt{\lambda}$ should be the highest weight of $\iota^{-1}(\xi \otimes \xi),$ not  $\iota^{-1}(\xi \otimes \xi)^{\vee}$, with $\xi:=V^*$ in our notation. We also note that the fact that there are two copies of $\xi$ in Theorem 2.3.3 of \textit{loc.\,cit.} goes away by choosing $\wt{S}$ in our calculation.}. Indeed, recalling our convention that the Hodge--Tate weight $\on{wt}_{\nu,\sigma}(\chi_{\on{cyc}})=1$ (the opposite of \cite{10AuthorPaper}) for each $\sigma:F \hookrightarrow \overline{\mathbf Q}_p$ and a place $\nu \mid p$ of $F$, we have that the Hodge--Tate weights for $\rho_y$ at a place $\nu\mid \overline{\nu}\mid p$ and a choice $\sigma$, are given by $\{\lambda_{\overline{\sigma},1},\lambda_{\overline{\sigma},2}-1,\ldots,\lambda_{\overline{\sigma},2n}-(2n-1)\}$ where $\overline{\sigma}:F^+\hookrightarrow \overline{\mathbf Q}_p$ is the corresponding embedding of $F^+$. The result \cite[Proposition 5.5]{DPS25} then implies $\wt{\zeta}_{\rho^C_y}$ is precisely $\chi_{\lambda}$.
\end{proof}
\begin{remark}
\label{remark about base change and unconditional assumptions}
    Note that our appeal to \cite[Theorem 2.3.3]{10AuthorPaper} is the most essential place where we need the standing assumption that $F$ contains an imaginary quadratic field. Without this assumption, the result would be conditional on the twisted weighted fundamental lemma; however, this lemma might soon be proved, as mentioned in Remark \ref{remark: discussion on ass.}. 
\end{remark}
\begin{remark}
    We point the reader to \cite[Section 4.7]{DPS25}, in particular equation (34) for the core of the calculation appearing in the end of the previous proof and some calculations that will follow below. Even though not all our generalized Hodge-Tate-Sen characters come from Galois representations, but rather from determinants, this section is helpful to understand where the commutative diagrams below should come from.
\end{remark}
Using the previous results, we will deduce a similar statement for the infinitesimal action on the compactly supported completed cohomology groups.  We keep the assumption that $\wt{\mf{m}}=\mc{S}^*(\mf{m})$ is the pullback under the Satake map of a decomposed generic maximal ideal of $\T^S(K^p)$. We view it as a maximal open ideal of $\wt{\T}^S_{\on{BM}}(\wt{K}^p)$ (see Remark \ref{remark: loc at max}). Let $\wt{\Theta}_{\on{BM}}$ be a $\GL_{2n}$--pseudocharacter of $\on{Gal}_{F,S}$ attached to $\wt{D}$ valued in $\wt{\T}^S_{\on{BM}}(\wt{K}^p)_{\wt{\mf{m}}}^{\on{rig}}$ constructed in Corollary \ref{comp. supp. det.} As before, we define a ${}^C\GL_{2n}$--pseudocharacter of $\on{Gal}_{F,S}$ valued in $\wt{\T}^S_{\on{BM}}(\wt{K}^p)_{\wt{\mf{m}}}^{\on{rig}}$ $$\wt{\Theta}_{\on{BM}}^C:= \on{tw}_{\wt{\delta}_{2n}}^{-1} \circ ( \wt{\Theta}_{\on{BM}} \boxtimes \chi_{\on{cyc}}).$$
As we did for usual cohomology, we denote by $\wt{\zeta}^C_{\on{BM}}: Z(\wt{\mf{g}}) \rightarrow \wt{\T}^S_{\on{BM}}(\wt{K}^p)_{\wt{\mf{m}}}^{\on{rig}}$ the generalized Hodge-Tate-Sen character attached to $\wt{\Theta}^C_{\on{BM}}$.
\begin{theorem}\label{theorem: com. supp. inf. act.}
    For any $i\geq 0$, the action of $Z(\wt{\mf{g}})$ on $\wt{\HH}^i_c(\wt{K}^p, \mathbf Q_p)_{\wt{\mf{m}}}^{\on{la}}$ is given by $\wt{\zeta}^C_{\rm{BM}}$.
\end{theorem}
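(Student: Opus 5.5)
The plan is to deduce the statement from Theorem \ref{unitary inf action} (applied to $\wt{i}(\wt{\mf{m}})$) via Corollary \ref{cor: transfer of inf act rigid} (1). Then we check that the resulting character $\wt{i}\circ\wt{\zeta}^C\circ\iota$ agrees with $\wt{\zeta}^C_{\on{BM}}$.

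\textbf{Step 1.} By Theorem \ref{vanishing} (1), $\wt{\HH}^j(\wt{K}^p,L)_{\wt{i}(\wt{\mf{m}})}$ vanishes for $j\neq d$. By Theorem \ref{unitary inf action}, in degree $d$ the center $Z(\wt{\mf{g}})$ acts on its locally analytic vectors by $\wt{\zeta}^C$. Here $\wt{\zeta}^C$ is built from the determinant of Corollary \ref{loc at max det} for the maximal ideal $\wt{i}(\wt{\mf{m}})$, so its Frobenius characteristic polynomials are $\wt{P}_\nu$. So the hypothesis of Corollary \ref{cor: transfer of inf act rigid} (1) holds for all $j$. We conclude that $Z(\wt{\mf{g}})$ acts on $\wt{\HH}^i_c(\wt{K}^p,L)^{\la}_{\wt{\mf{m}}}$, for every $i$, through
\[
\wt{i}\circ\wt{\zeta}^C\circ\iota : Z(\wt{\mf{g}})\to \wt{\T}^S_{\on{BM}}(\wt{K}^p)^{\on{rig}}_{\wt{\mf{m}}}.
\]

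\textbf{Step 2.} Identify $\wt{i}\circ\wt{\zeta}^C\circ\iota$ with $\wt{\zeta}^C_{\on{BM}}$. The construction of \cite{PasQua25} is functorial under ring maps, so $\wt{i}\circ\wt{\zeta}^C$ is the generalized Hodge--Tate--Sen character of the base-changed pseudocharacter $\wt{i}_*\wt{\Theta}$. Its Frobenius polynomials are $\wt{P}_{\nu^c}=q_\nu^{2n(2n-1)}\wt{P}^\vee_\nu(q_\nu^{1-2n}X)$. By the proof of Corollary \ref{comp. supp. det.}, this pseudocharacter equals $\wt{\Theta}_{\on{BM}}^\vee\otimes\chi_{\on{cyc}}^{1-2n}$; this uses uniqueness, i.e. Chebotarev density. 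It then suffices to prove a local compatibility: for any pseudocharacter $\Theta$, the character attached to $\Theta^\vee\otimes\chi_{\on{cyc}}^{1-2n}$ equals the character attached to $\Theta$ precomposed with $\iota$.

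We first check this on $\overline{\Q}_p$-points coming from honest representations, using \cite[Proposition 5.5]{DPS25}. Dualizing negates the Sen weights, and the twist shifts them by $1-2n$. With our choice $\wt{\delta}_{2n}$, the $C$-shift makes the weights $\{\lambda_1,\lambda_2-1,\dots,\lambda_{2n}-(2n-1)\}$ correspond to $\chi_\lambda$. Then $-\lambda_{2n+1-k}+(k-1)+(1-2n)=-\lambda_{2n+1-k}-(2n-k)$. This is exactly the weight pattern for $-w_0\lambda$, and $\chi_{-w_0\lambda}=\chi_\lambda\circ\iota$. Since $\iota$ acts on $U(\mf{t})$ by $x\mapsto -x$, it turns evaluation at $\lambda$ into evaluation at $-\lambda$, which is $W$-conjugate to $-w_0\lambda$. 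The identity over the rigid Hecke algebra then follows by density. Indeed, both sides are characters of $Z(\wt{\mf{g}})$ valued in a reduced ring, or at least one we may reduce to $\overline{\Q}_p$-points. They are given by universal polynomial formulas in the Sen-operator coefficients, as in \cite{PasQua25}. So it suffices to verify the identity at the universal level, on the pseudodeformation space, where representation-valued points are Zariski dense.

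\textbf{Main obstacle.} The main obstacle is Step 2's formal compatibility of the \cite{PasQua25} construction with duals and cyclotomic twists at the level of pseudocharacters, not just points. Rather than rely on pointwise density, which may fail for non-reduced rings, I would first try to argue directly from the definition. The Sen operator of the dual is minus the transpose. The twist adds $(1-2n)$ times the identity. The resulting characteristic-polynomial coefficients transform exactly as $\iota$ composed with the shift by $\wt{\delta}_{2n}$.
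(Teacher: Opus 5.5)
Your proposal follows the paper's proof. Step~1 is exactly the paper's reduction: Theorem \ref{unitary inf action} applied at $\wt{i}(\wt{\mf{m}})$, together with Corollary \ref{cor: transfer of inf act rigid}(1). The local identity you isolate in Step~2 uses $\wt{i}_*\wt{\Theta}=\wt{\Theta}_{\on{BM}}^{\vee}\otimes\chi_{\on{cyc}}^{1-2n}$, which is the paper's $\wt{D}_c$, and that identity is what Lemmas \ref{lemma: pos and neg} and \ref{lemma: anti + inf char} establish.

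\textbf{The density step does not work as written.} Passing from representations to pseudocharacters by Zariski density needs reducedness. Neither $\wt{\T}^S_{\on{BM}}(\wt{K}^p)^{\on{rig}}_{\wt{\mf{m}}}$ nor the rigid generic fibre of the pseudodeformation ring is known to be reduced, so agreement at points only gives agreement modulo nilpotents.

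The paper avoids this with the uniqueness in \cite[Theorem 7.11]{PasQua25}. The assignment $\Theta\mapsto\zeta(\Theta^{\vee}\otimes\chi_{\on{cyc}}^{1-2n})\circ\iota$ is functorial in the coefficients. It also agrees with the \cite{DPS25} character on genuine representations, by your weight computation or \cite[Equation (34)]{DPS25}. Hence it equals $\Theta\mapsto\zeta(\Theta)$. Concretely, the paper realizes this by pulling back along two group maps:
\begin{itemize}
\item transpose--inverse, which takes $\Theta$ to $\Theta^{\vee}$ and induces $\iota$ on the center;
\item conjugation by $w_0$, which is inner and so does not change pseudocharacters.
\end{itemize}
This is the precise form of your ``directly from the definition'' fallback, and it needs no reducedness.

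\textbf{Two small bookkeeping slips.} Neither affects your conclusions.
\begin{itemize}
\item $\iota$ does not commute with the unnormalized $\HC^{B}$. Instead $\iota\circ\HC^{B}=\HC^{\ol{B}}\circ\iota$, so $\chi_\lambda\circ\iota=\on{ev}_{-\lambda}\circ\HC^{\ol{B}}=\chi_{-w_0\lambda}$. By contrast, $\on{ev}_{-\lambda}\circ\HC^{B}\neq\chi_{-w_0\lambda}$ in general; this is why the paper routes through ${}^C\GL_{2n}^-$ and $\wt{\delta}^-_{2n}$.
\item After reindexing, the $k$-th weight of $\rho^{\vee}\otimes\chi_{\on{cyc}}^{1-2n}$ is $-\lambda_{2n+1-k}+(2n-k)+(1-2n)=-\lambda_{2n+1-k}-(k-1)$, not $-\lambda_{2n+1-k}-(2n-k)$. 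The correct expression is indeed the pattern for $-w_0\lambda$.
\end{itemize}
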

\begin{proof}
We will deduce this result from Theorem \ref{unitary inf action}, Corollary \ref{cor: transfer of inf act rigid} (1) and the two lemmas below. First, we apply Theorem \ref{unitary inf action} for $\wt{i}(\wt{\mf{m}})$ and note that the hypotheses in Corollary \ref{cor: transfer of inf act rigid}(1) are satisfied since $\wt{\HH}^i_{\wt{i}(\wt{\mf{m}})} \neq 0$ iff $i=d$. We conclude that $Z(\wt{\mf{g}})$ acts on $\wt{\HH}^i_c(\wt{K}^p, \mathbf Q_p)^{\on{la}}_{\wt{\mf{m}}}$ by $\wt{\zeta}^C_{\rm{BM},c} \circ \iota$, where $\iota: Z(\wt{\mf{g}}) \rightarrow Z(\wt{\mf{g}})$ is the involution coming from $x \mapsto -x$ and $\wt{\zeta}^C_{\rm{BM},c}$ is attached to the determinant $\on{tw}_{\wt{\delta}_{2n}}^{-1}\circ(\wt{D}_c \boxtimes \chi_{\on{cyc}})$ where $\wt{D}_c$ is the determinant as in Corollary \ref{comp. supp. det.} for characteristic polynomials for $\on{Frob}_{\nu}$ given by $\wt{P}_{\nu^c}$ instead of $\wt{P}_{\nu}$.
Therefore, it is left to note that $\wt{\zeta}^C_{\rm{BM}}=\wt{\zeta}^C_{\rm{BM},c} \circ \iota$.
This follows from the two lemmas below.
\end{proof}
\begin{lemma}\label{lemma: pos and neg}
    Let $m \geq 1$ and $\Theta$ a $\GL_m$--pseudocharacter of $\on{Gal}_{F,S}$ valued in $\mathcal{O}(Y)$ for $Y$ a rigid--analytic space over $\mathbf{Q}_p$. Let $\wt{\delta}_m:=(1,t^{-1},\ldots, t^{1-m}): \mathbf G_m \rightarrow \GL_m$ and $\wt{\delta}_m^-=(t^{1-m},\ldots,t^{-1},1): \mathbf{G}_m \rightarrow \GL_m$.  Let $\Theta^C:=\on{tw}_{\wt{\delta}_m}^{-1}\circ (\Theta \boxtimes \chi_{\on{cyc}})$ and $\Theta^{C,-}:=\on{tw}_{\wt{\delta}_m^-}^{-1}\circ (\Theta \boxtimes \chi_{\on{cyc}})$ be the corresponding pseudocharacters of ${}^C\GL_m$ and ${}^C\GL_m^-$ respectively. Then for each prime $\nu \mid p$ of $F$, we have $\zeta^C_{\nu}=\zeta^{C,-}_{\nu}$ where
    $$\zeta^C_{\nu}: Z(\Res_{F_{\nu}/\mathbf Q_p}\mf{gl}_m) \rightarrow \mathcal{O}(Y),  \ \ \zeta^{C,-}_{\nu}: Z(\Res_{F_{\nu}/\mathbf Q_p}\mf{gl}_m) \rightarrow \mathcal{O}(Y)
    $$
    are the generalized Hodge-Tate-Sen characters attached to $\Theta^C_{\nu}$ and $\Theta^{C,-}_{\nu}$ by \cite{PasQua25}, respectively.
\end{lemma}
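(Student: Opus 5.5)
The plan is to reduce to a comparison at the level of ``Sen polynomials'' and then run the construction of \cite{PasQua25} for the two choices of $C$-group side by side. Recall that \cite{PasQua25} attaches to a ${}^C\GL_m$-pseudocharacter at $\nu$ (restricted to $\Gal_{F_\nu}$) a Sen-operator datum. Concretely, for each embedding $\sigma: F_\nu\hookrightarrow L$ one gets an element of $(\mf{gl}_m\times \mathbf{G}_a)/\!/\GL_m$ valued in $\mO(Y)$, i.e.\ the coefficients of the characteristic polynomial of the Sen operator together with the Sen weight of the $\mathbf G_m$-component. This datum is then pulled back along the isomorphism identifying $Z(\Res_{F_\nu/\Q_p}\mf{gl}_m)\otimes L\cong\bigotimes_\sigma Z(\mf{gl}_m)_L$ with invariant functions on the dual Lie algebra, via the Harish-Chandra isomorphism normalized using the chosen Borel. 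The first step is to write out this recipe explicitly for $\Theta^C$ (Borel $B$, cocharacter $\wt\delta_m$) and for $\Theta^{C,-}$ (Borel $\overline B$, cocharacter $\wt\delta_m^-$).

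The key computation is as follows. Under $\on{tw}_{\wt\delta_m}$, the Sen operator of $\Theta^C$ in $\on{Lie}({}^C\GL_m)$ is $(\Theta_{\rm Sen} - (\wt\delta_m)_*(1)\cdot 1,\ 1)$, using $\on{wt}(\chi_{\on{cyc}})=1$. Here $(\wt\delta_m)_*(1)=\on{diag}(0,-1,\dots,1-m)=\delta^+ - \tfrac{m-1}{2}$. Similarly, for the $-$ version we get $(\wt\delta^-_m)_*(1)=\on{diag}(1-m,\dots,-1,0)=-\delta^+ - \tfrac{m-1}{2}=\delta^{+}_{\overline B}-\tfrac{m-1}{2}$, where $\delta^+_{\overline B}=-\delta^+$ is the half-sum of positive roots for $\overline B$. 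In both cases the $C$-group shift is ``$\rho$ of the chosen Borel plus the same central constant $-\tfrac{m-1}{2}$''. The map $\zeta^C$ is then defined by evaluating the normalized (i.e.\ $\rho$-shifted) Harish-Chandra isomorphism for the chosen Borel at the shifted Sen eigenvalues. The normalized Harish-Chandra isomorphism $Z(\mf{gl}_m)\cong S(\mf t)^W$ is independent of the Borel, since $B$ and $\overline B$ are conjugate by the longest Weyl element $w_0$ and the normalized map is $W$-invariant. The two recipes therefore produce the same $W$-invariant function evaluated at conjugate points. Concretely, the unnormalized evaluation at $\lambda$ for $B$ agrees with that at $w_0\lambda$ for $\overline B$, and the $\rho$-shifts match: $w_0(\delta^+)=-\delta^+=\delta^+_{\overline B}$. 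Since the characteristic polynomial is $W$-invariant, the resulting characters $\zeta^C_\nu$ and $\zeta^{C,-}_\nu$ coincide.

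To make this rigorous for pseudocharacters over $\mO(Y)$, rather than for honest representations, I would use that the construction of \cite{PasQua25} factors through the Sen ``characteristic polynomial'' in $\mO(Y)\otimes(\mf{g}^*/\!/G)$ and is functorial. It also suffices, by a density or universality argument, to check the identity of two maps $Z(\mf{gl}_m)\to\mO(Y)$ on generators for the universal case. In the universal case the identity is a polynomial identity in the Sen eigenvalues. The main obstacle is bookkeeping: pinning down exactly which Harish-Chandra normalization and which sign conventions \cite{PasQua25} uses (cf.\ \cite[Section 4.7, (34)]{DPS25}), and checking that the central constant from the two cocharacters is identical, which it is by the computation above. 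If that constant had differed, the characters would differ by a central twist, so this check is the crux.
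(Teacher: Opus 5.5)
Your proposal is correct, and its core computation coincides with the paper's. The paper likewise notes $\wt{\delta}_m^-=w_0\wt{\delta}_mw_0^{-1}$ and $\wt{\delta}_m-\delta_m=\wt{\delta}_m^--\delta_m^-$ (the same central element). It deduces from \cite[(34)]{DPS25} that, for honest Galois representations, the character does not depend on choosing $B$ rather than $\overline{B}$. Your Borel-independence of the normalized Harish-Chandra isomorphism unpacks exactly this.

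Where you differ is the passage to pseudocharacters. The paper treats \cite{PasQua25} as a black box characterized by the uniqueness in \cite[Theorem 7.11]{PasQua25}, namely functoriality plus agreement with \cite{DPS25} on representations. It transports the ${}^C\GL_m^-$-family along $\phi:(g,t)\mapsto(w_0gw_0^{-1},t)$, which intertwines $\on{tw}_{\wt{\delta}_m}$ and $\on{tw}_{\wt{\delta}_m^-}$ with conjugation by $w_0$ on $\GL_m\times\mathbf{G}_m$. It then observes that this conjugation does not change $\Theta\boxtimes\chi_{\on{cyc}}$, and concludes by uniqueness. The representation-level computation is what shows the transported family has the characterizing properties.

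You instead open the black box. You use that the construction factors functorially through a Sen point of the adjoint quotient, so the lemma becomes an identity between two explicit algebra maps out of $Z(\mf{gl}_m)$, i.e.\ a polynomial identity with no Galois input. Your route is more explicit and avoids the uniqueness theorem. However, it depends on the internal form and normalizations of \cite{PasQua25}, which is the obstacle you flag yourself. The paper's route only needs the normalization at the level of representations.

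One bookkeeping point: count the $\rho$-shift once. The recipe is the unnormalized Harish-Chandra map for the chosen Borel evaluated at the Sen weights minus $(\wt{\delta}_m)_*(1)$. Equivalently, it is the Borel-independent normalized map evaluated at the Sen weights shifted by $\tfrac{m-1}{2}$. Your phrase ``normalized \dots\ at the shifted Sen eigenvalues'' reads as applying both shifts. Your ``concretely'' sentence is right, though, and the conclusion is unaffected.
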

\begin{proof}
    Let $w_0 \in W^{\GL_m}(T)$ be the  element of the Weyl group of the standard torus $T$ that switches the positive and the negative roots with respect to the standard Borel $B$, and let $\delta_m, \ \delta_m^{-}$ denote the half sums of positive/negative roots. Then $\delta_m^-=\on{ad}(w_0)(\delta_m)$ and $\wt{\delta}_m^-=w_0\wt{\delta}_mw_0^{-1}$, and hence $\wt{\delta}_m-\delta_m = ((1-m)/2,\ldots, (1-m)/2) = \wt{\delta}_m^- - \delta_m^-$. Therefore, the construction of \cite[Section 4.7]{DPS25} (see equation (34)), which applies to generalized Hodge-Tate-Sen characters constructed from Galois representations, is independent of this choice of positive/negative roots. The equality between $\zeta^C$ and $\zeta^{C,-}$ remains true even though they are constructed from pseudocharacters. To see this, first note that the construction of \cite[Theorem 7.11]{PasQua25} is the unique one satisfying the desired functoriality and compatibility with characters constructed from Galois representations, as in Theorem 7.11 in \textit{loc.\,cit.}. Secondly, we have the following commutative diagram and we note that pulling back along the lower horizontal arrow does not change any pseudocharacter attached to $\GL_m \times \mathbf{G}_m$ (in particular $\Theta \boxtimes \chi_{\on{cyc}}$) :
    \[
    \begin{tikzcd}
        {}^C\GL_m\ar[d, "\on{tw}_{\wt{\delta}_m}"]\ar[r, "{\phi:=(g,t) \mapsto (w_0gw_0^{-1},t)}"] & [3em] {}^C\GL_m^-\ar[d, "\on{tw}_{\wt{\delta}_m^-}"]\\
        \GL_m \times \mathbf G_m\ar[r, "{(g,t) \mapsto (w_0gw_0^{-1},t)}"]& [3em] \GL_m \times \mathbf G_m.
    \end{tikzcd}
    \]
    It is not hard to see that pulling back along $\phi$ the unique family of characters for ${}^C\GL_m^-$ given by \cite[Theorem 7.11]{PasQua25} agrees with the unique family for ${}^C\GL_m$, as it satisfies the required conditions by the previous lines. Therefore, we always have $\zeta^C=\zeta^{C,-}$.
\end{proof}
\begin{lemma}\label{lemma: anti + inf char}
    With the notation as above, we have that $\wt{\zeta}^C_{\rm{BM},c} \circ \iota = \wt{\zeta}^{C,-}_{\rm{BM}}$, where $\wt{\zeta}^{C,-}_{\rm{BM}}:= \otimes_{\nu \in \wt{S}}\wt{\zeta}^{C,-}_{\rm{BM},\nu}: Z(\wt{\mf{g}}) \cong \otimes_{\nu \in \wt{S}} Z(\Res_{F_{\nu}/\mathbf Q_p} \mf{gl}_{2n}) \rightarrow \wt{\T}_{\rm{BM}}^S(\wt{K}^p)_{\wt{\mf{m}}}^{\on{rig}}$ defined as in Lemma \ref{lemma: pos and neg}.
\end{lemma}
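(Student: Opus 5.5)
Since both sides are characters valued in $\mathcal{O}(Y)$ with $Y=(\on{Spf}\wt{\T}^S_{\on{BM}}(\wt{K}^p)_{\wt{\mf{m}}})^{\rm rig}$, the plan is to reduce to characters attached to honest Galois representations. For the reduction I would use the uniqueness in \cite[Theorem 7.11]{PasQua25}, exactly as in the proof of Lemma \ref{lemma: pos and neg}. That theorem characterises the family $\Theta\mapsto\zeta^C_\Theta$ by functoriality in the coefficient space and by agreement with the construction of \cite{DPS25} for Galois representations. So it suffices to show two things. First, $\Theta\mapsto\zeta^C_{\Theta_c}\circ\iota$ is again such a family, functorial in the coefficients, with target the ${}^C\GL^-_{2n}$-recipe. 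Second, it agrees with $\zeta^{C,-}$ when $\Theta$ comes from a representation $\rho$ over $\overline{\Q}_p$. Here $\Theta_c$ denotes the operation on pseudocharacters described below. Functoriality in $Y$ is clear, because $\iota$ only acts on the source $Z(\wt{\mf g})$, and $\Theta\mapsto\Theta_c$ commutes with base change. Alternatively, one can argue pointwise: two elements of $\mathcal O(Y)$ agree if they agree at all $\overline{\Q}_p$-points $y$, since $Y$ is reduced after passing to the $p$-torsion-free reduced quotient, or we may just check on the image of the semisimple specialisations. At each point $y$, the specialised determinant comes from a semisimple $\rho_y$ by \cite{Che14}.

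Next I make explicit the relation between $\wt D_c$ and $\wt D$. By the proof of Corollary \ref{comp. supp. det.}, the Frobenius polynomials $\wt P_{\nu^c}(X)=q_\nu^{2n(2n-1)}\wt P_\nu^\vee(q_\nu^{1-2n}X)$ show that, pointwise, $\rho_{c}\cong\rho^{\vee}\otimes\chi_{\on{cyc}}^{1-2n}$. The sign of the exponent must be pinned down using $\chi_{\on{cyc}}(\Frob_\nu)=q_\nu^{-1}$. The upshot is that $\wt D_c$ is the dual of $\wt D$ twisted by a power of the cyclotomic character, and this passes to ${}^C$-pseudocharacters. The pointwise identity then becomes a computation with Sen weights at each $\nu\in\wt S$. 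Suppose the $\sigma$-Sen weights of $\rho_y$ are $\{h_1,\dots,h_{2n}\}$. Then those of $\rho_{y,c}$ are $\{-h_i+(1-2n)\}$. By \cite[Section 4.7, (34)]{DPS25} and \cite[Proposition 5.5]{DPS25}, $\zeta^C_\rho$ is evaluation through the unnormalised Harish-Chandra map at the weight obtained from the Sen weights by adding the $\wt\delta$-shift. For $\wt\delta_{2n}$, this weight is $\mu_i=h_{(i)}+(i-1)$ after ordering the $h$'s suitably, up to a Weyl-orbit ambiguity that is harmless after the $\rho$-shift.

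The involution $\iota$ induced by $x\mapsto -x$ sends the infinitesimal character of weight $\mu$ to that of $-w_0\mu$. Concretely, in normalised terms it replaces the $\delta$-shifted weight $\mu+\delta$ by $-(\mu+\delta)$; in unnormalised terms it is the character for $-\mu-2\delta$ up to $W$. I therefore need a bookkeeping check. Take the weights $-h_i+(1-2n)$ with the $\wt\delta_{2n}$ shift and then negate via $\iota$. This should equal the weights $h_i$ with the $\wt\delta^-_{2n}$ shift, modulo the $W$-action. The check uses $\wt\delta_{2n}-\delta=\wt\delta^-_{2n}-\delta^-=\tfrac{1-2n}{2}\cdot(1,\dots,1)$. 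With it, the central constants from the twist $\chi_{\on{cyc}}^{1-2n}$ and from the shifts cancel, so both sides become the same $W$-orbit of $(h_i+\tfrac{2n-1}{2})-\delta$.

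The main obstacle will be getting the signs and constants exactly right in this step. Three separate points need care: the direction of the cyclotomic twist in $\wt D_c$, the effect of $\iota$ on the unnormalised Harish-Chandra image, which gives $-\mu-2\delta$ rather than $-\mu$, and the fact that the answer naturally lands on the negative $C$-group. The last point is exactly why the statement is phrased with $\wt\zeta^{C,-}_{\on{BM}}$, to be combined afterwards with Lemma \ref{lemma: pos and neg}. I would verify the constants first in the case $n=1$ with $\rho$ a sum of characters before writing the general argument.
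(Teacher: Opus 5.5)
Your main route is correct and is essentially the paper's. Both arguments use the uniqueness in \cite[Theorem 7.11]{PasQua25} to reduce to genuine Galois representations, and then do the shift bookkeeping of \cite[Equation (34)]{DPS25}.

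The difference is in packaging. The paper first identifies $\iota$ on $Z(\wt{\mf g})$ with the map induced by transpose--inverse. This turns $\wt\zeta^C_{\on{BM},c}\circ\iota$ into the ${}^C\GL_{2n}^-$-character of the dual determinant $\wt D_{c,\vee}$ with shift $\wt\delta^{-,1}=(1,t,\dots,t^{2n-1})$. It then concludes from $\wt D=\wt D_{c,\vee}\otimes\chi_{\on{cyc}}^{1-2n}$ and $\wt\delta^-_{2n}-\wt\delta^{-,1}=(1-2n,\dots,1-2n)$. You instead compute $\iota$ directly as $\Lambda\mapsto-\Lambda$ on normalized Harish-Chandra parameters and compare Sen weights. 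Your constants are right: both sides have normalized parameter $h+\tfrac{2n-1}{2}(1,\dots,1)$. Your ``first'' step (that $\Theta\mapsto\zeta^C_{\Theta_c}\circ\iota$ is again an admissible family) is precisely functoriality of the characters of \cite{PasQua25} along the automorphism $(g,t)\mapsto(g^{-T}t^{1-2n},t)$ of $\GL_{2n}\times\mathbf G_m$. That is how the paper actually invokes uniqueness, so you should phrase it that way.

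Drop the pointwise alternative, though. The ring $\wt{\T}^S_{\on{BM}}(\wt K^p)_{\wt{\mf m}}$ is built from endomorphisms in derived categories and is not known to be reduced, so $\mathcal O(Y)$ may contain nilpotents. Agreement at all $\overline{\Q}_p$-points, or on the reduced quotient, therefore only gives the identity modulo the nilradical. That is strictly weaker than the lemma, and too weak for its later use in the main theorem.

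For the same reason, in the uniqueness argument, compare with \cite{DPS25} for representations over whatever coefficient rings the uniqueness statement in \cite[Theorem 7.11]{PasQua25} quantifies over, not just over $\overline{\Q}_p$. Your computation goes through unchanged if you phrase it via the characteristic polynomial of the Sen operator rather than individual Sen weights.
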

\begin{proof}
    First let us note that we have the following commutative diagram
    \[
    \begin{tikzcd}
        Z(\wt{\mf{g}})\ar[d, "\HC^B"]\ar[r, "\iota"] & Z(\wt{\mf{g}}) \ar[d, "\HC^{\overline{B}}"]\\
        U(\mf{t})\ar[r, "\iota"]& U(\mf{t})
    \end{tikzcd}
    \]
    where we use $\iota$ to denote involutions on both of $Z(\wt{\mf{g}})$ and $U(\mf{t})(=Z(\mf{t}))$.\footnote{We note that we have a similar diagram for any parabolic subgroup and its Levi subgroup in place of $(B,T)$, e.g. for $(P,G)$ instead of $(B,T)$.} We also note that $\iota: Z(\wt{\mf{g}}) \rightarrow Z(\wt{\mf{g}})$ is induced by the transpose--inverse morphism $\GL_{2n} \rightarrow \GL_{2n}$ since the above diagram is commutative in that case as well and the vertical maps are injections. This implies that $\wt{\zeta}^C_{\rm{BM},c} \circ \iota= \wt{\zeta}^{C,-}_{\rm{BM},1}$ where $\wt{\zeta}^{C,-}_{\rm{BM},1}$ is a generalized Hodge-Tate-Sen character attached to the ${}^C\GL_{2n}^-$--pseudocharacter $\Theta^{C,-}_{\rm{BM},1}:= \on{tw}_{\wt{\delta}^{-,1}}^{-1} \circ (\wt{D}_{c,\vee} \boxtimes \chi_{\on{cyc}})$ with $\wt{\delta}^{-,1}:=(1,t,\ldots,t^{2n-1})$ and $\wt{D}_{c, \vee}$ a determinant as in Corollary \ref{comp. supp. det.} for the polynomial $\wt{P}_{\nu^c}^{\vee}$.\footnote{Again, one can see this is true by first arguing for Galois representations using \cite[Section 4.7]{DPS25}. Then as in Lemma \ref{lemma: pos and neg}, we find an appropriate commutative diagram (lower horizontal arrow this time is given by $\GL_m \xrightarrow{(-)^{-1,T}} \GL_m$ and the identity on $\mathbf G_m$).} Since $\wt{P}_{\nu}(X)=q_{\nu}^{2n(2n-1)}\wt{P}_{\nu^c}^{\vee}(q_{\nu}^{1-2n}X)$ we have that $\wt{D}=\wt{D}_{c, \vee} \otimes \chi_{\on{cyc}}^{1-2n}$ where $\wt{D}$ is the determinant as in Corollary \ref{comp. supp. det.} attached to $\wt{P}_{\nu}$. Since $\wt{\zeta}^{C,-}_{\rm{BM}}$ is defined as a generalized Hodge-Tate-Sen character for the ${}^C\GL_{2n}^-$--pseudocharacter $\on{tw}_{\wt{\delta}_{2n}^-}^{-1}\circ(\wt{D} \boxtimes \chi_{\on{cyc}})$, the claim follows from noting that $(1-2n,\ldots,1-2n)=\wt{\delta}_{2n}^--\wt{\delta}^{-,1}$ (again, see \cite[Equation (34)]{DPS25} and argue as in Lemma \ref{lemma: pos and neg}).
\end{proof}
This completes the proof of Theorem \ref{theorem: com. supp. inf. act.}, so that the locally analytic vectors of compactly supported completed cohomology of $\wt{G}$ localized at $\wt{\mf{m}}$ have infinitesimal action \textit{in all degrees} given by the expected generalized Hodge-Tate-Sen character. In the next two sections, we use this result together with the content of Section \ref{section: facts about completed cohomology of unitary groups} to prove Theorem \ref{theorem: Main Theorem}.

\section{Compatibility of the characters $\zeta^C_{\wt{\rho}}$ and $\zeta^C_{\rho}$}

\label{section: Compatibility between rhoTilde and rho}

In this section we prove the compatibility of the generalized Hodge-Tate-Sen characters for $\wt{G}$ and $G$.
Recall that $\mf{m}$ is a non--Eisenstein decomposed generic maximal ideal of $\T^S(K^p)$ and that $\wt{\mf{m}}:=\mc{S}^{\ast}(\mf{m})$. From Lemma \ref{lemma: pos and neg} and Theorem \ref{theorem: com. supp. inf. act.} we have that the action of $Z(\wt{\mf{g}}):=Z(\Res_{F^+/\mathbf Q}\wt{\mf{g}})_{\mathbf Q_p}$ on $\wt{\HH}^i_c(\wt{K}^p, \mathbf Q_p)_{\wt{\mf{m}}}^{\wt{G}-\on{la}}$ is given by $\wt{\zeta}^{C,-}_{\rm{BM}}$. Let us now introduce the generalized Hodge-Tate-Sen character for $G$ that we consider.

Let $\rho$ be a Galois representation given by Theorem \ref{theorem: Galois representation for GLn} and consider $\rho^C: \on{Gal}_{F,S}\rightarrow {}^C\GL_n(\T^S(K^p)_{\mf{m}}^{\on{rig}})$ given by $\on{tw}_{\wt{\delta}_n}^{-1}\circ (\rho \boxtimes \chi_{\on{cyc}})$ where $\wt{\delta}_n:=(1,t^{-1},\ldots,t^{1-n})$. This gives a character $\zeta^C: Z(\mf{g}):=\otimes_{\nu\mid p} Z(\Res_{F_{\nu}/\mathbf Q_p}\mf{gl}_n) \rightarrow \T^S(K^p)_{\mf{m}}^{\on{rig}}$ (note, this time we take all places $\nu \mid p$ of $F$). As in Lemma \ref{lemma: pos and neg} we have the version of $\rho^{C,-}: \on{Gal}_{F,S} \rightarrow {}^C\GL_n^-(\T^S(K^p)_{\mf{m}}^{\on{rig}})$ given by $\on{tw}_{\wt{\delta}_n^{-}}^{-1}\circ (\rho \boxtimes \chi_{\on{cyc}})$ where $\wt{\delta}_n^{-}=(t^{1-n},\ldots,t^{-1},1)$. The two characters $\zeta^{C,-}$ and $\zeta^C$ agree. Since the opposite parabolic $\overline{P}$ appears in Corollary \ref{cor: transfer of inf act rigid}(2), we will use the constructions for $\zeta^{C,-}$ and $\zeta^{C,-}_{\rm{BM}}$. Recalling the already established compatibility of infinitesimal actions in Corollary \ref{cor: transfer of inf act rigid}(2), it is left to show the following theorem.
\begin{theorem}
\label{theorem: Harish Chandra + Inf Char compatibility}
    The following diagram commutes: 
    \[
    \begin{tikzcd}
        Z(\wt{\mf{g}})\ar[d, "{\wt{\zeta}^{C,-}_{\rm{BM}}}"]\ar[r, "{\HC^{\ol{P}}}"] & Z(\mf{g}) \ar[d, "{\zeta^{{C,-}}}"]\\
\wt{\T}_{\rm{BM}}^S(\wt{K}^p)_{\wt{\mf{m}}}^{\rm{rig}}\ar[r, "\mc{S}"]& \T^S(K^p)_{\mf{m}}^{\rm{rig}}.
    \end{tikzcd}
    \]
\end{theorem}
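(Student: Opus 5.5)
We will prove the commutativity by reducing it to a statement about pseudocharacters, and then to a computation on Galois representations over points, using the uniqueness and functoriality of the Pa\v{s}k\={u}nas--Quast construction. Both compositions $\mc{S}\circ\wt{\zeta}^{C,-}_{\rm{BM}}$ and $\zeta^{C,-}\circ \HC^{\ol{P}}$ are $\Q_p$-algebra maps $Z(\wt{\mf{g}})\to \T^S(K^p)_{\mf{m}}^{\rm{rig}}$. By functoriality of \cite[Theorem 7.11]{PasQua25} in the coefficient ring, $\mc{S}\circ\wt{\zeta}^{C,-}_{\rm{BM}}$ is the generalized Hodge-Tate-Sen character attached to the base change $\mc{S}_*\wt{\Theta}^{C,-}_{\rm{BM}}$. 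The first step is therefore to identify $\mc{S}_*\wt{D}$, where $\wt{D}$ is the determinant of Corollary \ref{comp. supp. det.}.

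\textbf{Step 1: identify the base-changed determinant.} By \cite[Proposition--Definition 5.3]{NT16}, $\mc{S}(\wt{P}_\nu)=P_\nu(X)\,q_\nu^{n(2n-1)}P^{\vee}_{\nu^c}(q_\nu^{1-2n}X)$. This is the characteristic polynomial of $\on{Frob}_\nu$ under $\rho\oplus \rho^{c,\vee}\chi_{\on{cyc}}^{1-2n}$, where $\rho$ is the representation of Theorem \ref{theorem: Galois representation for GLn}. By Chebotarev density and continuity, the uniqueness argument of Theorem \ref{determinant} then gives $\mc{S}_*\wt{D}=\det(\rho\oplus\rho^{c,\vee}\chi_{\on{cyc}}^{1-2n})$. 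Since this pseudocharacter comes from a genuine representation over $\T^S(K^p)^{\rm rig}_{\mf{m}}$, the Pa\v{s}k\={u}nas--Quast character coincides with the DPS character of the ${}^C\GL_{2n}^-$-valued representation $\on{tw}^{-1}_{\wt\delta^-_{2n}}\circ((\rho\oplus\rho^{c,\vee}\chi_{\on{cyc}}^{1-2n})\boxtimes\chi_{\on{cyc}})$, restricted at each $\nu\in\wt S$.

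\textbf{Step 2: reduce to a Sen-weight identity.} We now argue place by place. Fix $\ol\nu\mid p$, the chosen $\nu\in\wt S$, and its conjugate $\nu^c$. Under $\wt G_{F^+_{\ol\nu}}\simeq\GL_{2n}$, the Levi $G$ becomes $\GL_n\times\GL_n$, and the two factors correspond to $\nu$ and $\nu^c$ (the second through transpose-inverse, as in \cite[Lemma 5.1]{NT16}). Restricted to $\Gal_{F_\nu}$, the representation becomes $\rho|_{\Gal_{F_\nu}}\oplus (\rho|_{\Gal_{F_{\nu^c}}})^{\vee}\chi_{\on{cyc}}^{1-2n}$, after identifying $F_\nu\simeq F_{\nu^c}$ via $c$. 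Both characters factor through Sen operators: the DPS construction sends $z$ to the evaluation of $\HC^{\ol B}(z)$ at a torus element built from the Sen weights and the shift $\wt\delta^-$ (\cite[Section 4.7, eq. (34)]{DPS25}). Since $\HC^{\ol B_{2n}}=\HC^{\ol B_G}\circ\HC^{\ol P}$ for unnormalized maps with compatible opposite Borels, it suffices to show the following: the Sen/weight datum of $\on{tw}^{-1}_{\wt\delta^-_{2n}}$ on the $\GL_{2n}$ side, viewed in $\mf t$, equals the concatenation of the data for $\rho^{C,-}$ at $\nu$ and the transpose-inverse-twisted data for $\rho^{C,-}$ at $\nu^c$.

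\textbf{Step 3: the main computation, and the expected obstacle.} Concretely, we must check that $\wt\delta^-_{2n}=(t^{1-2n},\dots,t^{-1},1)$ restricts on the last $n$ coordinates to $\wt\delta^-_n$. On the first $n$ coordinates it should restrict to $\wt\delta^-_n\cdot t^{-n}$. This offset must be compensated by the twist $\chi_{\on{cyc}}^{1-2n}$ together with the passage through $\iota$ (transpose-inverse, as in Lemma \ref{lemma: anti + inf char}), which relates the $\nu^c$ block to the $\ol P$-Levi factor. Getting this bookkeeping exactly right is the main obstacle: the ordering of blocks under $\ol P$, the sign in the central shift, and the fact that the paper's convention makes $\HC^{\ol P}$ unnormalized (so no $\rho$-shift of $\mf n$ appears). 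To handle it, we first verify the identity for Galois representations over $\ol{\Q}_p$-points by direct weight comparison, as in Lemma \ref{lemma: pos and neg}. We then pass to $\T^S(K^p)^{\rm rig}_{\mf m}$ by uniqueness in \cite[Theorem 7.11]{PasQua25}, using a commutative diagram of $C$-groups ${}^C(\GL_n\times\GL_n)^-\to{}^C\GL_{2n}^-$ compatible with the twists. Alternatively, since $\rho$ itself exists over $\T^S(K^p)_{\mf m}$, we can apply the DPS construction directly in families, which avoids the pointwise step altogether.
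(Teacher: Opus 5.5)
Your proposal is correct, and it takes a more direct route than the paper. The paper never compares $\wt{\zeta}^{C,-}_{\rm{BM}}$ and $\zeta^{C,-}$ through $\ol{P}$ directly. It first proves an auxiliary square for the standard parabolic $P$, involving two characters:
\begin{itemize}
\item $\wt{\zeta}^C_{\rm{BM},c}$, built from $\wt{D}_c$ and $\wt{\delta}_{2n}$, so that after $\mc{S}$ one sees $\rho^c\oplus\rho^{\vee}\chi_{\on{cyc}}^{1-2n}$;
\item $\zeta^C_{\vee,1}$, built from $\rho^\vee$ and $\wt{\delta}^1$.
\end{itemize}
For this square it checks the shift identity $\on{diag}(0,\ldots,0,1-2n,\ldots,1-2n)-\on{diag}(0,-1,\ldots,1-2n)=-\on{diag}(0,-1,\ldots,1-n,n-1,\ldots,1,0)$ against \cite[Equation (34)]{DPS25}. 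It then conjugates the whole square by $\iota$, using $\HC^P\circ\iota=\iota\circ\HC^{\ol{P}}$, Lemma \ref{lemma: anti + inf char}, and the analogous identity $\zeta^{C,-}=\zeta^C_{\vee,1}\circ\iota$ on the $G$ side.

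You instead identify $\mc{S}_*\wt{D}=\det(\rho\oplus\rho^{c,\vee}\chi_{\on{cyc}}^{1-2n})$ and compare the $\wt{\delta}^-$-shifted Sen data through $\HC^{\ol{B}_{2n}}=\HC^{\ol{B}_G}\circ\HC^{\ol{P}}$. This removes the $\iota$-detour and the auxiliary characters. The paper's detour has the merit of working with $\wt{\zeta}^C_{\rm{BM},c}$, the character that Poincar\'e duality actually produces in Theorem \ref{theorem: com. supp. inf. act.}, and with the standard-Borel normalizations of \cite{DPS25}.

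The bookkeeping you defer does close up, provided you use the block assignment of your Step 3 rather than the order written in Step 2. The top-left block is the $\nu^c$ factor, embedded via $A\mapsto\Psi_nA^{-1,T}\Psi_n$, and the bottom-right block is the $\nu$ factor.
\begin{itemize}
\item On the top-left block the shifted Sen operator is $\Theta_{\rm Sen}(\rho^{c,\vee})+(1-2n)-(1-2n,\ldots,-n)=\Theta_{\rm Sen}(\rho^{c,\vee})+(0,-1,\ldots,1-n)$. The map $X\mapsto-\Psi_nX^T\Psi_n$ preserves the lower triangular Borel and turns this into $\Theta_{\rm Sen}(\rho^c)+(n-1,\ldots,1,0)$, which is exactly the data of $\rho^{C,-}$ at $\nu^c$.
\item The bottom-right block gives $\Theta_{\rm Sen}(\rho)+(n-1,\ldots,1,0)$ directly.
\end{itemize}

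Of your two ways of passing to $\T^S(K^p)^{\rm rig}_{\mf{m}}$, prefer the second: apply \cite{DPS25} in families to the genuine representation $\rho$, which is effectively what the paper does when it invokes \cite[Equation (34)]{DPS25} for $\wt{\rho}^C$. The pointwise route needs more than a comparison at $\ol{\Q}_p$-points, since $\T^S(K^p)^{\rm rig}_{\mf{m}}$ need not be reduced.
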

Intuitively, this result comes from the fact that $\mc{S}(\wt{P}_\nu(X))=P_\nu(X)q_\nu^{n(2n-1)}P_{\nu^c}^{\vee}(q_\nu^{1-2n}X)$ which follows from \cite[Proposition--Definition 5.3]{NT16}.
The proof of the commutativity consists of two steps, the first one being the following proposition.
\begin{proposition}
    There is a commutative diagram
        \[
    \begin{tikzcd}
        Z(\wt{\mf{g}})\ar[d, "\wt{\zeta}^{C}_{\rm{BM},c}"]\ar[r, "{\HC^{P}}"] & Z(\mf{g}) \ar[d, "\zeta^{C}_{\vee,1}"]\\
    \wt{\T}_{\rm{BM}}^S(\wt{K}^p)_{\wt{\mf{m}}}^{\rm{rig}}\ar[r, "\mc{S}"]& \T^S(K^p)_{\mf{m}}^{\rm{rig}},
    \end{tikzcd}
    \]
    where $\wt{\zeta}^C_{\rm{BM},c}$ is as in proof of Theorem \ref{theorem: com. supp. inf. act.} and $\zeta^C_{\vee,1}$ is a generalized Hodge-Tate-Sen character attached to ${}^C\GL_n$--pseudocharacter given by $\on{tw}_{\wt{\delta}^{1}}^{-1}\circ (\rho^{\vee} \boxtimes \chi_{\on{cyc}})$ where $\wt{\delta}^1=(t^{n-1},\ldots,t,1)$.
\end{proposition}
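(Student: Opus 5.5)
The plan is to reduce the statement to a computation with honest Galois representations at points, and then to a comparison of Sen weights and $C$-group shifts on the two Levi blocks. All maps in the diagram are continuous $\mathbf Q_p$-algebra homomorphisms into $\T^S(K^p)_{\mf{m}}^{\rm{rig}}$, and $\T^S(K^p)_{\mf{m}}$ is a complete local Noetherian ring (Corollary \ref{corollary: big Hecke algebras are Noetherian}). It therefore suffices to check commutativity after composing with every $\mathbf Q_p$-algebra map $y:\T^S(K^p)_{\mf{m}}^{\rm{rig}}\to\ol{\Q}_p$, since such points separate elements of $\mO((\on{Spf}\T^S(K^p)_{\mf{m}})^{\rm{rig}})$. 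By the functoriality of the construction of \cite[Theorem 7.11]{PasQua25} under ring maps, $y\circ\zeta^C_{\vee,1}$ is the character attached to the specialised pseudocharacter, and likewise $y\circ\mc{S}\circ\wt{\zeta}^C_{\rm{BM},c}$ is the character attached to the ${}^C\GL_{2n}$-pseudocharacter obtained by pushing $\on{tw}^{-1}_{\wt{\delta}_{2n}}\circ(\wt{D}_c\boxtimes\chi_{\on{cyc}})$ along $y\circ\mc{S}$.

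The first key step identifies $\mc{S}_*\wt{D}_c$. Apply \cite[Proposition--Definition 5.3]{NT16} to $\nu^c$:
\[
\mc{S}(\wt{P}_{\nu^c}(X))=P_{\nu^c}(X)\,q_\nu^{n(2n-1)}P_\nu^{\vee}(q_\nu^{1-2n}X).
\]
Since $\chi_{\on{cyc}}(\on{Frob}_\nu)=q_\nu^{-1}$, the second factor is the characteristic polynomial of $\on{Frob}_\nu$ under $\rho^\vee\otimes\chi_{\on{cyc}}^{1-2n}$. The first factor is the characteristic polynomial of $\on{Frob}_\nu$ under $\rho^c$ (note $S=S^c$). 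By Chebotarev density and uniqueness of continuous determinants, we get
\[
\mc{S}_*\wt{D}_c=D_{\rho^c}\cdot D_{\rho^\vee\otimes\chi_{\on{cyc}}^{1-2n}}.
\]
That is, $\mc{S}_*\wt{D}_c$ is the pseudocharacter of the block-diagonal representation $\rho^c\oplus\rho^\vee(1-2n)$ valued in the Levi $\GL_n\times\GL_n\subset\GL_{2n}$. At each point $y$ this is the pseudocharacter of an actual semisimple representation $\rho_y^c\oplus\rho_y^\vee(1-2n)$, so by \cite[Theorem 7.11]{PasQua25} the character $y\circ\mc{S}\circ\wt{\zeta}^C_{\rm{BM},c}$ equals the DPS character of this representation (twisted into ${}^C\GL_{2n}$). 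That DPS character can then be computed from Sen weights via \cite[Section 4.7, equation (34)]{DPS25}.

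The second step is a shift bookkeeping at each $\nu\in\wt{S}$. Here $Z(\Res_{F_\nu/\Q_p}\mf{gl}_{2n})$ maps under $\HC^P$ to $Z(\Res_{F_\nu/\Q_p}\mf{gl}_n)\otimes Z(\Res_{F_{\nu^c}/\Q_p}\mf{gl}_n)$. The first block carries $\rho^c|_{\Gal_{F_\nu}}$, which after conjugation is $\rho|_{\Gal_{F_{\nu^c}}}$, and the second block carries $\rho^\vee(1-2n)|_{\Gal_{F_\nu}}$; I will match these to the appropriate tensor factors of $Z(\mf{g})$ exactly as the Levi $G_{F^+_{\ol{\nu}}}\simeq\GL_n\times\GL_n$ is indexed by $\nu,\nu^c$. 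Restricting $\wt{\delta}_{2n}=(1,\dots,t^{1-2n})$ to the blocks gives $\wt{\delta}_n$ on the first block and $t^{-n}\wt{\delta}_n$ on the second. The unnormalized $\HC^P$ differs from the Levi's own Harish-Chandra map by the shift by $2\rho_P$ (the sum of the roots in $\mf{n}$), which acts on each block by a central scalar. Using equation (34), which expresses the character as evaluation of the Harish-Chandra image at (Sen weights) minus (the shift cocharacter, modulo its $\delta$-part), I then check on each block that the central scalars cancel. Specifically, the $t^{-n}$, the twist $\chi_{\on{cyc}}^{1-2n}$, and the $2\rho_P$-shift together should convert the second block into the ${}^C\GL_n$-shift $\wt{\delta}^1=(t^{n-1},\dots,1)$ for $\rho^\vee$. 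Since $\wt{\delta}^1=t^{n-1}\cdot w_0\wt{\delta}_nw_0^{-1}$-type rearrangements are harmless by the argument of Lemma \ref{lemma: pos and neg}, only the central scalar needs to match.

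I expect this last shift comparison to be the main obstacle. Each block is only determined up to a central cocharacter, and the three sources of shift, namely the restriction of $\wt{\delta}_{2n}$, the cyclotomic twist, and the unnormalized Harish-Chandra shift, must cancel exactly. On the first block the bookkeeping must also be arranged so that the factor indexed by $\nu^c$ comes out in the form recorded by $\zeta^C_{\vee,1}$ in the statement. I would first do the computation for $n=1$ (the Levi $\GL_1\times\GL_1\subset\GL_2$) to fix signs, and then the general case, which is the same computation with diagonal entries. Finally, the passage from points back to the identity of characters uses the uniqueness clause of \cite[Theorem 7.11]{PasQua25}, exactly as in the proof of Lemma \ref{lemma: pos and neg}.
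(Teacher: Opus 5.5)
Your outline matches the paper's route. You identify $\mc{S}(\wt{D}_c)$ with the determinant of $\rho^c\oplus\rho^{\vee}\otimes\chi_{\on{cyc}}^{1-2n}$; this first key step is correct and is exactly what the paper does. You then compare $C$-group shifts blockwise via \cite[Equation (34)]{DPS25}. But two steps would not go through as you describe them.

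\textbf{Reduction to points.} You claim $\ol{\Q}_p$-points separate elements of $\T^S(K^p)_{\mf{m}}^{\on{rig}}$. This holds only if that ring is reduced. Reducedness of $\T^S(K^p)_{\mf{m}}[1/p]$ is not known and is never used in the paper. Two characters $Z(\mf{g})\to\T^S(K^p)_{\mf{m}}^{\on{rig}}$ that agree at every point can still differ by nilpotents. The uniqueness in \cite[Theorem 7.11]{PasQua25} is about functorial families of characters, so it does not lift pointwise identities in a fixed ring either. The specialization is also unnecessary. Theorem \ref{theorem: Galois representation for GLn} gives $\rho$, hence $\rho^c$ and $\rho^{\vee}$, valued in $\GL_n(\T^S(K^p)_{\mf{m}}^{\on{rig}})$, and your identity of determinants holds in the family. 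The paper applies \cite[Equation (34)]{DPS25} directly to the $\T^S(K^p)^{\on{rig}}_{\mf{m}}$-valued representation $\wt{\rho}^C$. It uses that the characters of \cite{PasQua25} agree with those of \cite{DPS25} for representations with general coefficients.

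\textbf{The shift comparison.} This is the real content of the proposition, and you postpone it. The mechanism you anticipate is off in two places.

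First, there is no Harish-Chandra shift to cancel: for unnormalized maps, $\HC^{B\cap(\GL_n\times\GL_n)}\circ\HC^P=\HC^B$ exactly. On the second block, the twist $\chi_{\on{cyc}}^{1-2n}$ and the restriction $t^{-n}\wt{\delta}_n$ of $\wt{\delta}_{2n}$ already combine to $t^{n-1}\wt{\delta}_n=\wt{\delta}^1$ on the nose. Equivalently, $\wt{\rho}^C=\on{tw}^{-1}_{\wt{\delta}'}((\rho^c\times\rho^{\vee})\boxtimes\chi_{\on{cyc}})$ with $\wt{\delta}'=(1,t^{-1},\ldots,t^{1-n},t^{n-1},\ldots,t,1)$. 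This is the paper's identity $\on{diag}(0,\ldots,0,1-2n,\ldots,1-2n)-\on{diag}(0,-1,\ldots,1-2n)=-\on{diag}(0,-1,\ldots,1-n,n-1,\ldots,1,0)$. Adding a $2\rho_P=(n,\ldots,n,-n,\ldots,-n)$ term would break it.

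Second, on the first block you get $\rho^c$ with shift $\wt{\delta}_n$, whereas $\zeta^C_{\vee,1}$ at $\nu^c$ needs $\rho^{\vee}$ with shift $\wt{\delta}^1$. The missing ingredient is that $\GL_n(F_{\nu^c})$ embeds into the first block via $A\mapsto\Psi_nA^{-1,T}\Psi_n$ \cite[Lemma 5.1]{NT16}. Transpose-inverse induces $\iota$ on $Z(\mf{gl}_n)$. This turns the character of $\rho^c$ with shift $\wt{\delta}_n$ into the ${}^C\GL_n^-$-character of $(\rho^c)^{\vee}$ with shift $\wt{\delta}_n^{-1}$. Conjugation by $\Psi_n=w_0$ (as in Lemma \ref{lemma: pos and neg}) then converts this to the ${}^C\GL_n$-character with shift $w_0\wt{\delta}_n^{-1}w_0^{-1}=\wt{\delta}^1$. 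If you match the first block to the $\nu^c$-factor ``as the Levi is indexed'' without this transpose-inverse, you get the character of $\rho$ rather than $\rho^{\vee}$ at $\nu^c$, which is off by $\iota$.
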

\begin{proof}
    Adopting the notation from the proof of Theorem \ref{theorem: com. supp. inf. act.}, we first remark that the generalized Hodge-Tate-Sen character $\wt{\zeta}^C_{\rm{BM},c}$ depends only on the determinant $\wt{D}_{c}$, therefore $\mc{S} \circ \wt{\zeta}^C_{\rm{BM},c}$ depends only on $\mc{S}(\wt{D}_c)$, which has characteristic polynomials for all $\on{Frob}_\nu$, $\nu \notin S$:
    $$\mc{S}(\wt{D}_c)(X-\on{Frob}_\nu)=P_{\nu^c}(X)\cdot q_\nu^{n(2n-1)}P_\nu^{\vee}(q_\nu^{1-2n}X).
    $$
    Furthermore, by appropriate versions of Theorem \ref{theorem: Galois representation for GLn} we have Galois representations $\rho^c, \ \rho^{\vee}: \on{Gal}_{F,S}\rightarrow \GL_n(\T^S(K^p)_{\mf{m}}^{\on{rig}})$ with respective characteristic polynomials given by $P_{\nu^c}(X)$ and $P_\nu^{\vee}(X)$.\footnote{Note that $\rho^c$ is given by composition of $\rho$ with $\on{Gal}_{F,S} \xrightarrow{\sigma \mapsto c\circ\sigma\circ c} \Gal_{F,S}$ which sends $\on{Frob}_\nu \rightarrow \on{Frob}_{\nu^c}$.} Also note that $\rho^{\vee} \otimes \chi_{\on{cyc}}^{1-2n}$ has characteristic polynomials given by $q_\nu^{n(2n-1)}P_\nu^{\vee}(q_\nu^{1-2n}X)$. Therefore, $\mc{S} \circ \wt{\zeta}^C_{\rm{BM},c}$ is constructed from 
    \begin{equation}\label{eq: rho tilde}
        \wt{\rho}^C:=\on{tw}_{\wt{\delta}_{2n}}^{-1}\circ ((\rho^c \times \rho^{\vee}\otimes \chi_{\on{cyc}}^{1-2n}) \boxtimes \chi_{\on{cyc}})
    \end{equation}
    where $\rho^c\times \rho^{\vee}\otimes\chi_{\on{cyc}}^{1-2n}: \on{Gal}_{F,S}\rightarrow \GL_n(\T^S(K^p)_{\mf{m}}^{\on{rig}}) \times \GL_n(\T^S(K^p)_{\mf{m}}^{\on{rig}}) \hookrightarrow \GL_{2n}(\T^S(K^p)_{\mf{m}}^{\on{rig}})$ is the corresponding block--diagonal representation. We also recall that $\mc{S} \circ \wt{\zeta}^C_{\rm{BM},c}$ is given by restricting the generalized Hodge-Tate-Sen for $\wt{\rho}^C$ only to places $\nu \in \wt{S}$ (therefore not all places in $S_p(F)$; on the other hand, for $G$ we will have both $\nu$ and $\nu^c$ appear, even though we look only at the generalized Hodge-Tate-Sen character for $\wt{\rho}^C$ restricted to $\nu\in \wt{S}$).

    To proceed further, let us recall how $G(F_p^+)\cong \prod_{\nu\mid p} \GL_n(F_{\nu})$ sits inside $\wt{G}(F_p^+)=\prod_{\nu \in \wt{S}} \GL_{2n}(F_{\nu})$, where we identify $F_{\nu} \simeq F_{\overline{\nu}}^+ \simeq F_{\nu^c}$. We have that $\GL_n(F_{\nu^c})\times \GL_n(F_{\nu})$ sits inside the block--diagonal $\GL_n \times \GL_n \subset \GL_{2n}(F_{\nu})$ via $(A,B)\mapsto (\Psi_nA^{-1,T}\Psi_n, B)$ where $\Psi_n$ is the anti--diagonal matrix (see \cite[Lemma 5.1]{NT16} and note that we omit $c$ from notation since we have identified $F_{\nu} \simeq F_{\nu^c}$). Also note that $\Psi_n A^{-1,T}\Psi_n$ is given by reflecting $A^{-1}$ with respect to the anti--diagonal.

    Let us now consider $\rho'=\rho^c \times \rho^{\vee}$ as a $\GL_n \times \GL_n$--representation and let $\zeta'$ be the generalized Hodge-Tate-Sen character attached to ${}^C(\GL_n \times \GL_n)$--Galois representation $\on{tw}_{\wt{\delta}'}^{-1}\circ(\rho' \boxtimes \chi_{\on{cyc}})$ where $\wt{\delta}'=(1,t^{-1},\ldots,t^{1-n}, t^{n-1},\ldots,t,1)$. The explicit formulae in the previous paragraph imply that the character
    \[
    \zeta: Z(\mf{g})=\bigotimes_{\nu \in \wt{S}}\left(Z(\Res_{F_{{\nu}^c}/\mathbf Q_p}\mf{gl}_n) \otimes_{\Q_p} Z(\Res_{F_{\nu}/\mathbf Q_p}\mf{gl}_n)\right) \xrightarrow[F_{\nu}\simeq F_{\nu^c}]{\otimes_{\nu \in \wt{S}}\zeta'_\nu} \T^S(K^p)_{\mf{m}}^{\on{rig}}
    \]
    equals $\zeta^C_{\vee,1}$ (where $G(F_{\nu^c})\times G(F_\nu) \rightarrow (\GL_n \times \GL_n) (F_\nu)$ via the map above). The only thing left to note is the commutativity of the following diagram:
     \[
    \begin{tikzcd}
        Z(\Res_{F_\nu/\mathbf Q_p}\mf{gl}_{2n})\ar[d, "{\zeta_{\wt{\rho}^C, \nu}}"]\ar[r, "{\HC^{P}}"] & Z(\Res_{F_\nu/\mathbf{Q}_p}(\mf{gl}_n \times \mf{gl}_n)) \ar[d, "\zeta^{'}_\nu"]\\
    \wt{\T}_{\rm{BM}}^S(\wt{K}^p)_{\wt{\mf{m}}}^{\rm{rig}}\ar[r, "\mc{S}"]& \T^S(K^p)_{\mf{m}}^{\rm{rig}},
    \end{tikzcd}
    \]
    This follows from equation \eqref{eq: rho tilde}, \cite[Equation (34)]{DPS25}, and the fact that
    $$\on{diag}(\underbrace{0,\ldots,0}_{n},\underbrace{1-2n,\ldots,1-2n}_{n})- \on{diag}(0,-1,\ldots,1-2n)=-\on{diag}(0,-1,\ldots,1-n,n-1,\ldots,1,0)
    $$
    keeping in mind that $\HC^{B\cap (\GL_n \times \GL_n)}\circ \HC^P=\HC^B$ where $B$ corresponds to the standard Borel subgroup of $\GL_{2n}$, $P$ corresponds to the parabolic subgroup of block $n\times n$ upper--triangular matrices, and all Harish-Chandra maps are the unnormalized ones (as usual in this paper).
\end{proof}
\begin{proof}[Proof of Theorem \ref{theorem: Harish Chandra + Inf Char compatibility}]
As in the proof of Lemma \ref{lemma: anti + inf char}, we have a commutative diagram  
    \[
    \begin{tikzcd}
        Z(\wt{\mf{g}})\ar[d, "\HC^{\overline{P}}"]\ar[r, "\iota"] & Z(\wt{\mf{g}}) \ar[d, "\HC^{P}"]\\
        Z(\mf{g})\ar[r, "\iota"]& Z(\mf{g})
    \end{tikzcd}
    \]
    where we again use $\iota$ to denote involutions on both $Z(\wt{\mf{g}})$ and $Z(\mf{g})$. The proof then follows from noting that $\wt{\zeta}^{C,-}_{\rm{BM}}=\wt{\zeta}^C_{\rm{BM},c}\circ \iota$ and $\zeta^{C,-}=\zeta^{C}_{\vee,1}\circ \iota$, the first by Lemma \ref{lemma: anti + inf char} and the second by noting again that $\iota:Z(\mf{g}) \rightarrow Z(\mf{g})$ is induced by transpose--inverse on $\GL_n$ and arguing as in the proof of Lemma \ref{lemma: anti + inf char}.
\end{proof}
The results so far imply that Theorem \ref{theorem: Main Theorem} is true for the subalgebra $\HC^{\overline{P}}(Z(\wt{\mf{g}})) \subset Z(\mf{g})$. In order to complete the proof of the main theorem, we need to cover the whole $Z(\mf{g})$. This is done in the next section by twisting with characters.
\section{Twisting and the infinitesimal action for $\GL_n$}

\label{section: Twisting and proof of the main theorem}

In this section, we will finish proving Theorem \ref{theorem: Main Theorem} (see Theorem \ref{theorem: main theorem v2} and Remark \ref{Remark: actually proving the main theorem}). From previous sections, we know that $\HC^{\ol{P}}(Z(\wt{\mf{g}}))\subset Z(\mf{g})$ acts on $\wt{\HH}^i(K^p,\Q_p)_{\mf{m}}^{\on{la}}$ through the expected generalized Hodge-Tate-Sen character $\zeta^{C,-}$. Since this is a proper subalgebra of $Z(\mf{g})$, we need to work more to deduce the main theorem on the whole of $Z(\mf{g})$. Let us briefly explain the strategy. We may base--change to a sufficiently large finite extension $L/\Q_p$, as usual. We will twist\footnote{We mention that the idea of twisting by a character appeared already in other works (starting with \cite{HLTT16}), with slightly different technical details.} the maximal ideal $\mf{m} \subset \T^S(K^p)$ by some continuous character $\chi$ of the Galois group $\Gal_F$, which gives us a new maximal ideal $\mf{m}(\chi)$ for which we can run the arguments of the previous sections to get that $\HC^{\ol{P}}(Z(\wt{\mf{g}})_L)$ acts on $\wt{\HH}^i(K^p,L)_{\mf{m}(\chi)}^{\on{la}}$ through the expected generalized Hodge-Tate-Sen character $\zeta^{C,-}_{\chi}$. 

It will be sufficient to consider $\chi:=\chi_{\on{cyc}}$, for which we have an explicit map $\phi:Z(\mf{g})_L\rightarrow Z(\mf{g})_L$ which intertwines the actions of $Z(\mf{g})_L$ on $\wt{\HH}^i(K^p,L)^{\on{la}}_{\mf{m}}$ and on $\wt{\HH}^i(K^p,L)^{\on{la}}_{\mf{m}(\chi)}$. In particular, this $\phi$ also intertwines the generalized Hodge-Tate-Sen characters $\zeta^{C,-}$ and $\zeta^{C,-}_{\chi}$. From here we deduce that $\phi(\HC^{\ol{P}}(Z(\wt{\mf{g}})_L))$ acts on $\wt{\HH}^i(K^p,L)_{\mf{m}}^{\on{la}}$ as expected and the final step is then to note, via explicit computations using Harish-Chandra isomorphisms, that $\phi(\HC^{\ol{P}}(Z(\wt{\mf{g}})_L))$ and $\HC^{\ol{P}}(Z(\wt{\mf{g}})_L)$ span (as an algebra) the whole $Z(\mf{g})_L$.

In the first subsection, we will do the twisting of $\mf{m}$, calculate $\phi$ explicitly and show that it intertwines $\zeta^{C,-}$ and $\zeta^{C,-}_{\chi}$. In the second subsection, using the results of the first and explicit calculation with Harish-Chandra isomorphisms for $\wt{G}$ and $G$, we will finish the proof of Theorem \ref{theorem: Main Theorem}.
\subsection{Twisting the maximal ideal}
Let $L$ be a finite extension of $\mathbf Q_p$ with $[F:\mathbf Q]$ embeddings $F\hookrightarrow L$ as usual.
Fix the tame level $K^p \subset \GL_n(\A_F^{p,\infty})$\footnote{Since we will consider only $\GL_n$ in this subsection (and not $\tG$), we may consider it over $F$, rather than $\Res_{F/F^+}\GL_n$ over $F^+$.}, and let $\chi: \Gal_F \to \mO_L^{\times}$ be a continuous character such that $\chi \circ \Art_{F_{\nu}}|_{\det(K_{\nu})} = 1$ for each place $\nu \notin S$ of $F$. This choice induces an isomorphism on the abstract Hecke algebra (see subsection \ref{subsection: big Hecke} for notation)
\[
f_{\chi}: \T^S \otimes_{\mO_L} L \to \T^S \otimes_{\mO_L} L
\]
given by the formula
\begin{equation}\label{eq: twist on Hecke}
    f_{\chi}(f)(g) = \chi(\Art_F(\det(g)))^{-1}f(g) \ .
\end{equation}
Given a maximal ideal $\mf{m}\subset \T^S$ we define $\mf{m}(\chi) := f_{\chi}(\mf{m})$. Likewise we let $\wt{\mf{m}}(\chi) := \mc{S}^{\ast}(\mf{m}(\chi))$.
Let $\chi_p:\GL_n(F_p) \to \mO_L^{\times}$ be defined by $(g_{\nu})_{\nu \mid p} \mapsto \prod_{\nu \mid p}\chi(\Art_{F_{\nu}}(\det(g_{\nu})))$. For us it will be sufficient to consider $\chi:=\chi_{\on{cyc}}$, although the arguments below hold more generally. 

First we set up some more notation. For each $\nu \mid p$, write $d\chi_{p,\nu}: U(\Res_{F_\nu/\Q_p}\mf{gl}_n) \rightarrow L$ for the action of the universal enveloping algebra given by differentiating $\chi_{p,\nu}: \GL_n(F_\nu) \rightarrow \GL_n(F_p) \xrightarrow{\chi_p} \mathcal{O}_L^\times$. In the case $\chi= \chi_{\on{cyc}}$, by local class field theory, we have that for any $x = (x_{i,j}) \in \Res_{F_\nu/\Q_p} \mf{gl}_n = \on{Mat}_{n\times n}(F_{\nu})$, $d\chi_{p,\nu}(x)= \on{Tr}_{F_\nu/\Q_p}(\sum_i x_{i,i})$, where $\on{Tr}_{F_\nu/\Q_p}: F_\nu \to \Q_p$ is the trace map. 

We also define the algebra homomorphism $\phi_{\chi}: U(\mf{g}) \rightarrow U(\mf{g})$ such that for $\nu \mid p$ and every $x_{\nu} \in \Res_{F_{\nu}/\Q_p} \mf{gl}_n \subset (\Res_{F/\Q} \mf{gl}_n)_{\Q_p}$ we have that $\phi_{\chi}(x_\nu)=x_{\nu}+d\chi_{p, \nu}(x_{\nu})$. We first record the following simple lemma.
\begin{lemma}\label{lemma: compatibility for inf action GLn}
    Let $V$ be an admissible $L$-Banach representation of $\GL_n(F_p)$. Denote by $\zeta$ and $\zeta_\chi$ the actions of $Z(\Res_{F/\Q }\mf{gl}_n)_{\Q_p}$ on $V^{\on{la}}$ and on $V^{\on{la}} \otimes_L \chi_p^{-1}$, respectively. Then the natural isomorphism $V^{\on{la}} \simeq V^{\on{la}}\otimes_L \chi_p^{-1}$ is equivariant for the action of $Z(\Res_{F/\Q} \mf{gl}_n)_{\Q_p}$ given by $\zeta$ on the left and by $\zeta_{\chi} \circ \phi_{\chi}|_{Z(\Res_{F/\Q }\mf{gl}_n)_{\Q_p}}$ on the right.
\end{lemma}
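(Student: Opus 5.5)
The approach is to compare the Lie algebra actions on the two sides of the identity map and then pass to the universal enveloping algebra. Write $W := V^{\on{la}}\otimes_L \chi_p^{-1}$. Its underlying space is $V^{\on{la}}$, and $g\in \GL_n(F_p)$ acts by $g\cdot_W v = \chi_p(g)^{-1}\, g v$. Since $\chi_p$ is a locally analytic (in fact continuous, hence locally $\Q_p$-analytic) character of a $p$-adic Lie group, the locally analytic vectors of $V\otimes\chi_p^{-1}$ are exactly $V^{\on{la}}\otimes\chi_p^{-1}$. So both sides carry actions of $U(\mf{g})$ obtained by differentiating, and the natural identity map $V^{\on{la}}\to W$ is a topological isomorphism of vector spaces.

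The first step is to differentiate the twisted action. For $x\in \Res_{F_\nu/\Q_p}\mf{gl}_n$ and $v \in V^{\on{la}}$, compute
\[
x\cdot_W v = \frac{d}{dt}\Big|_{t=0}\chi_{p,\nu}(\exp(tx))^{-1}\exp(tx)v = x v - d\chi_{p,\nu}(x) v .
\]
This uses the Leibniz rule for the derivative of a product of the scalar locally analytic function $t\mapsto \chi_{p,\nu}(\exp tx)^{-1}$ and the orbit map $t \mapsto \exp(tx)v$, which is analytic for small $t$. Hence, as Lie algebra actions on the common underlying space, $x\cdot_W = x - d\chi_{p,\nu}(x)$. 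Equivalently, $\phi_\chi(x)$ acting via $W$ equals $x$ acting via $V^{\on{la}}$, since $d\chi_{p,\nu}$ is a scalar and so $\phi_\chi(x)\cdot_W v = xv - d\chi(x)v + d\chi(x)v = xv$.

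The second step is to extend this from $\mf{g}$ to $U(\mf{g})$. The map $\phi_\chi$ is a well-defined algebra endomorphism of $U(\mf{g})$, because $x\mapsto x+d\chi_{p,\nu}(x)$ preserves brackets: $d\chi_{p,\nu}$ is a Lie algebra character, so it vanishes on commutators, and the scalar shifts commute with everything. Both $u\mapsto (u \text{ acting on } V^{\on{la}})$ and $u\mapsto (\phi_\chi(u) \text{ acting on } W)$ are algebra homomorphisms $U(\mf{g})\to \End(V^{\on{la}})$. They agree on the generators $\mf{g}$, so they agree on all of $U(\mf{g})$.

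Finally, restrict to $Z(\mf{g})$. For $z\in Z(\mf{g})$, the element $\phi_\chi(z)$ is again central, since $\phi_\chi$ is an automorphism with inverse $\phi_{\chi^{-1}}$. Therefore $z$ acting on $V^{\on{la}}$ equals $\phi_\chi(z)$ acting on $W$. In the paper's notation this reads $\zeta(z)=\zeta_\chi(\phi_\chi(z))$, which is the claimed equivariance. Extending scalars from $\Q_p$ to $L$ where needed is harmless.

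The only mildly delicate point is the first step: justifying the differentiation formula in the locally convex setting. I would handle it by noting that the action map on $V^{\on{la}}$ is locally analytic, so the orbit maps are analytic near the identity, and then using the product rule for analytic functions valued in a Banach space, working in a $BH$-subspace on which $v$ is analytic. Everything else is formal.
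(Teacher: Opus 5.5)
Your proof is correct and follows the same route as the paper. You differentiate the twisted action to get $x\mapsto x-d\chi_{p,\nu}(x)$, so that $\phi_\chi(x)$ acting on $V^{\on{la}}\otimes_L\chi_p^{-1}$ recovers the action of $x$ on $V^{\on{la}}$, and then pass to $U(\mf{g})$ and its center by the universal property. You simply spell out details the paper's two-line proof leaves implicit, namely the product-rule computation and the check that $\phi_\chi$ respects brackets because $d\chi_{p,\nu}$ vanishes on commutators.
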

\begin{proof}
    The actions of $U(\Res_{F/\Q }\mf{gl}_n)_{\Q_p}$ (and also of $ Z(\Res_{F/\Q }\mf{gl}_n)_{\Q_p}$) are given by the universal property applied to the actions of $ (\Res_{F/\Q }\mf{gl}_n)_{\Q_p}$ on $V^{\on{la}}$ and $V^{\on{la}} \otimes_L \chi_p^{-1}$. Since $\chi_p^{-1}$ is locally analytic with infinitesimal action given by $-d\chi_{p, \nu}$ when restricted to $\Res_{F_\nu/\Q_p} \mf{gl}_n$, we get the claim.
\end{proof}
We now consider how twisting by $\chi$ changes the action of $\GL_n(F_p)$ on the completed cohomology.
\begin{lemma}
\label{lemma: twisting completed cohomology}
There is a $\GL_n(F_p)$-equivariant isomorphism
$t_{\chi}:\wt{\HH}^i(K^p, L)_{\mf{m}}^{(\la)} \simeq \wt{\HH}^i(K^p, L)_{\mf{m}(\chi)}^{(\la)}\otimes_L \chi_p^{-1}$, which intertwines the usual $\T^S \otimes_{\mathcal{O}_L}L$-action on the left and the usual action precomposed with $f_{\chi}$ on the right.
\end{lemma}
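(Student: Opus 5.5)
We construct $t_\chi$ at finite level with torsion coefficients and then pass to the limit, as is standard for twisting cohomology of locally symmetric spaces by a character of the determinant. Fix a compact open $K_p\subset \GL_n(F_p)$ small enough that $\chi_p|_{K_p}\equiv 1 \bmod \varpi_L^m$. Then the composite
\[
\GL_n(\A_F^\infty)\xrightarrow{\det}(\A_F^{\infty})^\times\to \Gal_F^{\rm ab}\xrightarrow{\chi}\mO_L^\times\to (\mO_L/\varpi_L^m)^\times
\]
is trivial on $K_pK^p$, by the hypothesis $\chi\circ\Art_{F_\nu}|_{\det K_\nu}=1$ for $\nu\notin S$ (for $\nu\in S$, $\nu\nmid p$, we may shrink $K^p$ or include this in the hypotheses; the paper's choice of $\chi=\chi_{\on{cyc}}$ is unramified away from $p$). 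It is also trivial on $\GL_n(F)$ and on the connected component at infinity, since $\chi$ is a Galois character and $\Art_F$ kills $F^\times$ and $(F\otimes\R)^{\times,\circ}$. Hence it defines a locally constant function $\psi_m$ on $X^G_{K_pK^p}$, that is, a class in $\HH^0(X^G_{K_pK^p},(\mO_L/\varpi_L^m)^\times)$.

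Cup product with $\psi_m$ (equivalently, multiplication by $\psi_m$ on cochains) gives an automorphism of $\HH^i(X^G_{K_pK^p},\mO_L/\varpi_L^m)$. We then check its two equivariance properties directly on double cosets.

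1. Under $g_p\in\GL_n(F_p)$ acting by right translation, $\psi_m(xg_p)=\psi_m(x)\chi_p(g_p)$. So multiplication by $\psi_m$ intertwines the original action with the action twisted by $\chi_p^{\pm1}$; the sign is fixed by the stated target $\otimes\chi_p^{-1}$, possibly after replacing $\psi_m$ by $\psi_m^{-1}$.
2. For a Hecke operator $[K_\nu gK_\nu]$ with $\nu\notin S$, the correspondence moves points by elements $h\in K_\nu gK_\nu$. Since $\chi(\Art(\det h))$ is constant on this double coset, we get $T\circ\psi_m=\chi(\Art(\det g))\,\psi_m\circ T$. This is exactly the formula \eqref{eq: twist on Hecke} for $f_\chi$.

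It follows that multiplication by $\psi_m$ carries the $\mf{m}$-localization to the $f_\chi(\mf{m})=\mf{m}(\chi)$-localization, or to its inverse depending on conventions, which we fix by matching with \eqref{eq: twist on Hecke}.

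The maps $\psi_m$ are compatible as $K_p$ shrinks and $m$ varies, so they pass to $\varinjlim_{K_p}$ and then to $\varprojlim_m$. This gives an $\mO_L$-linear isomorphism of completed cohomology with the stated twisted $\GL_n(F_p)$- and Hecke-equivariance. Inverting $p$ gives the statement with $L$-coefficients. For the localization, note that the twisted Hecke action still factors through the big Hecke algebra, and $f_\chi$ induces a continuous isomorphism $\T^S(K^p)\to\T^S(K^p)$ sending $\mf{m}$ to $\mf{m}(\chi)$. This holds because the torsion cohomology groups are identified compatibly, so the images of $\T^S$ correspond under $f_\chi$. Finally, passing to locally analytic vectors commutes with twisting by the locally analytic character $\chi_p^{-1}$, which gives the $(\la)$ version.

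The main obstacle is bookkeeping rather than substance. One must pin down the signs and inverses so that the twist is by $\chi_p^{-1}$ and the Hecke map is $f_\chi$ and not $f_{\chi^{-1}}$. One must also ensure $\psi_m$ is genuinely well defined on the locally symmetric space, i.e. trivial on $K^p$ at the places in $S$ away from $p$. We would handle the latter using the hypothesis on $\chi$ together with the fact that $\chi_{\on{cyc}}$ is unramified outside $p$.
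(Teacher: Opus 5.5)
Your proposal is correct and is essentially the paper's argument. The paper invokes \cite[Lemma 2.5]{Hevesi23} for a level-compatible, Hecke-twisted isomorphism $R\Gamma(X_{K^pK_p},V)\simeq R\Gamma(X_{K^pK_p},V\otimes_L\chi_p^{-1})$, which is the same twisting by $\chi\circ\Art_F\circ\det$ as your $\psi_m$. It then takes $V=\mO_L/\varpi_L^m$, passes to the limit, and uses $R\Gamma(K^p,L\otimes\chi_p^{-1})\simeq R\Gamma(K^p,L)\otimes_L\chi_p^{-1}$. You instead shrink $K_p$ until $\chi_p\equiv 1 \bmod \varpi_L^m$ and put the twist directly into the $\GL_n(F_p)$-action, which amounts to the same thing.

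Two small corrections. With the standard right-translation conventions, $\psi_m$ itself (not $\psi_m^{-1}$) already gives exactly the stated $\chi_p^{-1}$ and $f_\chi$. At places $\nu\in S\setminus S_p(F)$ you may not shrink $K^p$, since the tame level is fixed; your other remark, that $\chi_{\on{cyc}}$ is unramified away from $p$, is the correct fix and suffices.
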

\begin{proof}
    This is essentially contained in the proof of \cite[Lemma 6.7]{mcdonald2025eigenvarieties}, whose reasoning we briefly recall. Namely, 
    \cite[Lemma 2.5]{Hevesi23} gives us an isomorphism $R\Gamma(X_{K^pK_p}, V) \to R\Gamma(X_{K^pK_p}, V\otimes_L \chi_p^{-1})$ for $V$ a smooth left $K_p$--module which is finite free as an $\mO_L/\varpi_L^m$--module, intertwining the usual Hecke action with the one precomposed with $f_{\chi}$. These maps are manifestly compatible with changing the level at $p$, and so plugging in $V = \mO_L/\varpi_L^m$,  localizing at $\mf{m}$, and passing to the limit yields an isomorphism $R\Gamma(K^p, L)_{\mf{m}}\simeq R\Gamma(K^p, L\otimes \chi_p^{-1})_{\mf{m}(\chi)}\simeq R\Gamma(K^p, L)_{\mf{m}(\chi)}\otimes_L\chi_p^{-1}$. Passing to locally analytic vectors yields the analogous result, noting that $\chi_p$ is a continuous character (and thus automatically locally analytic).
\end{proof}
Lemma \ref{lemma: twisting completed cohomology} and its proof imply that the map $f_{\chi}$ induces an isomorphism $f_{\chi}: \T^S(K^p)\rightarrow \T^S(K^p)$ on the Hecke algebra defined in subsection \ref{subsection: big Hecke} (its inverse is given by $f_{\chi^{-1}}$). In particular $\mf{m}(\chi)$ is an open maximal ideal of $\T^S(K^p)$ and we have an isomorphism $f_{\chi}: \T^S(K^p)_{\mf{m}}^{(\on{rig})}\simeq \T^S(K^p)_{\mf{m}(\chi)}^{(\on{rig})}$.
The following is a consequence of Lemma \ref{lemma: compatibility for inf action GLn} and Lemma \ref{lemma: twisting completed cohomology}.
\begin{corollary}[Twist on the automorphic side]\label{cor: twist on the automorphic side}
    The natural isomorphism $\wt{\HH}^i(K^p, L)^{\on{la}}_{\mf{m}(\chi)} \simeq \wt{\HH}^i(K^p,L)_{\mf{m}}^{\on{la}}$ given by Lemma \ref{lemma: twisting completed cohomology} intertwines the usual $\T^S(K^p)_{\mf{m}(\chi)}^{\on{rig}}$-action on the left and the $\T^S(K^p)_{\mf{m}(\chi)}^{\on{rig}}$-action on the right given by composing with $f_{\chi}^{-1}: \T^S(K^p)_{\mf{m}(\chi)}^{\on{rig}}\rightarrow \T^S(K^p)_{\mf{m}}^{\on{rig}}$. Furthermore, the natural action of $Z(\Res_{F/\Q} \mf{gl}_n)_{\Q_p}$ on the left intertwines with the natural action on the right precomposed with $\phi_{\chi}$, with the notation as in Lemma \ref{lemma: compatibility for inf action GLn}.
\end{corollary}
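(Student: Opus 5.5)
The corollary follows by composing the two preceding lemmas and inverting the isomorphism of Lemma \ref{lemma: twisting completed cohomology}. Write $t_\chi: \wt{\HH}^i(K^p,L)^{\la}_{\mf{m}} \xrightarrow{\sim} \wt{\HH}^i(K^p,L)^{\la}_{\mf{m}(\chi)}\otimes_L\chi_p^{-1}$. Since twisting by the character $\chi_p^{-1}$ does not change the underlying $L$-vector space, $t_\chi$ can be viewed as a linear isomorphism $\wt{\HH}^i_{\mf{m}}{}^{,\la} \simeq \wt{\HH}^i_{\mf{m}(\chi)}{}^{,\la}$. The natural isomorphism in the statement is its inverse $t_\chi^{-1}$.

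For the Hecke compatibility, Lemma \ref{lemma: twisting completed cohomology} says that $t_\chi(T v) = f_\chi(T)\, t_\chi(v)$ for $T\in \T^S\otimes_{\mO_L}L$. Substituting $T = f_\chi^{-1}(T')$ gives $t_\chi^{-1}(T' w) = f_\chi^{-1}(T')\, t_\chi^{-1}(w)$. This is the claimed intertwining at the level of the abstract Hecke algebra. To pass to $\T^S(K^p)^{\on{rig}}_{\mf{m}(\chi)}$, I would use the following facts.
\begin{enumerate}[label=(\roman*)]
\item $f_\chi$ extends to an isomorphism $\T^S(K^p)^{(\on{rig})}_{\mf{m}}\simeq \T^S(K^p)^{(\on{rig})}_{\mf{m}(\chi)}$, as recorded just before the corollary.
\item The actions of the rigid Hecke algebras are continuous.
\item The image of $\T^S\otimes L$ is dense in $\T^S(K^p)_{\mf{m}}[1/p]$, which is in turn dense in $\T^S(K^p)^{\on{rig}}_{\mf{m}}$.
\end{enumerate}
The identity therefore extends by density and continuity.

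For the infinitesimal action, apply Lemma \ref{lemma: compatibility for inf action GLn} with $V=\wt{\HH}^i(K^p,L)_{\mf{m}(\chi)}$, which is an admissible Banach representation. The identity map $V^{\la}\to V^{\la}\otimes\chi_p^{-1}$ intertwines $\zeta$ with $\zeta_\chi\circ\phi_\chi$. Here $\zeta_\chi$ is the action on the twist, which $t_\chi$ identifies $\GL_n(F_p)$-equivariantly with the natural action on $\wt{\HH}^i_{\mf{m}}{}^{,\la}$. Since $t_\chi$ is $\GL_n(F_p)$-equivariant and continuous between locally analytic representations, it commutes with the derived $U(\mf{g})$-actions. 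Chaining the two identifications: for $z\in Z(\mf{g})$ acting naturally on the $\mf{m}(\chi)$-side, its image under $t_\chi^{-1}$ is the natural action of $\phi_\chi(z)$ on the $\mf{m}$-side.

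The main thing to get right is the direction of the twists and signs, namely $\chi_p^{-1}$ versus $\chi_p$ and $f_\chi$ versus $f_\chi^{-1}$. I would fix these by checking the formula $\phi_\chi(x)=x+d\chi_{p,\nu}(x)$ on a single Lie algebra element $x$. One must also confirm that $\phi_\chi$ preserves $Z(\mf{g})$. This holds because $\phi_\chi$ is the automorphism of $U(\mf{g})$ induced by a Lie algebra automorphism that adds a character, i.e. translation by a central element; it therefore commutes with the adjoint action and preserves the center.
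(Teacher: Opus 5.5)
Your proposal is correct and follows the paper's argument. The paper likewise just combines Lemma~\ref{lemma: compatibility for inf action GLn}, applied to $V=\wt{\HH}^i(K^p,L)_{\mf{m}(\chi)}$, with Lemma~\ref{lemma: twisting completed cohomology}, using that $f_\chi$ extends to an isomorphism $\T^S(K^p)^{\on{rig}}_{\mf{m}}\simeq \T^S(K^p)^{\on{rig}}_{\mf{m}(\chi)}$. Your directions of the twists are right, and your density/continuity extension to the rigid Hecke algebra and your check that $\phi_\chi$ preserves $Z(\mf{g})$ only make explicit what the paper leaves implicit (strictly, $x\mapsto x+d\chi_{p,\nu}(x)$ is a Lie map $\mf{g}\to U(\mf{g})$ rather than an automorphism of $\mf{g}$, but your conclusion stands).
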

Next, we want to show that the twist on the automorphic side $\phi_{\chi}$ intertwines the expected actions of $Z(\mf{g})$ on $\wt{\HH}^i(K^p,L)_{\mf{m}(\chi)}^{\on{la}}$ and $\wt{\HH}^i(K^p, L)^{\on{la}}_{\mf{m}}$, i.e. that it agrees with the twist on the Galois side.

 We note that $\mf{m}(\chi)$ is also non--Eisenstein and decomposed generic (its residual representation equals a twist of the residual representation for $\mf{m}$ by $\chi$, see \eqref{eq: twist on Hecke}). Let $\zeta^{C,-}_{\chi}$ denote the generalized Hodge-Tate-Sen character attached to the Galois representation given by Theorem \ref{theorem: Galois representation for GLn} for $\mf{m}(\chi)$, by the same recipe as in Section \ref{section: Compatibility between rhoTilde and rho}.
\begin{remark}\label{remark: prev section for m(chi)}
From the previous sections applied to $\mf{m}(\chi)$ instead of $\mf{m}$, we know that the action of $\HC^{\overline{P}}(Z(\wt{\mf{g}})_L)$ on $\wt{\HH}^i(K^p,L)_{\mf{m}(\chi)}^{\on{la}}$ is given by $\zeta_{\chi}^{C,-}|_{\HC^{\overline{P}}(Z(\wt{\mf{g}})_L)}$.
\end{remark}
For simplicity and since it will turn out to be sufficient, let us now specialize our computation to $\chi:=\chi_{\on{cyc}}$. We want to prove that the characters $\zeta^{C,-}$ and $\zeta^{C,-}_{\chi}$ fit into the following diagram.
\begin{proposition}[Galois twist agrees with automorphic twist]
\label{proposition: Twists Galois inf char of GLn by chi}
    Let $\chi:=\chi_{\on{cyc}}$. The following diagram commutes:
        \[
   \begin{tikzcd} Z(\mf{g})_L\ar[d, "{\phi_{\chi}\otimes L}"] \ar[r, "{\zeta^{C,-}_{\chi}}"]& [5em]\T^S(K^p)_{\mf{m}(\chi)}^{\rm{rig}}\ar[d, "{f_{\chi}^{-1}}"]\\
   Z(\mf{g})_L\ar[r, "{\zeta^{C,-}}"] & [5em] \T^S(K^p)_{\mf{m}}^{\rm{rig}}.
   \end{tikzcd}
    \]
\end{proposition}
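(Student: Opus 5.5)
We will compare the two Galois-side objects directly: first relate the Galois representation attached to $\mf{m}(\chi)$ to the one attached to $\mf{m}$ via $f_\chi$, and then compute the effect of a twist by $\chi_{\on{cyc}}$ on the generalized Hodge-Tate-Sen character.

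\textbf{Step 1: identify the twisted representation.} Let $\rho_\chi:\Gal_{F,S}\to \GL_n(\T^S(K^p)_{\mf{m}(\chi)})$ be the representation of Theorem \ref{theorem: Galois representation for GLn} for $\mf{m}(\chi)$. From \eqref{eq: twist on Hecke}, $f_\chi(T_{\nu,i})=\chi(\Frob_\nu)^{-i}T_{\nu,i}$ up to our normalization $\Art(\varpi_\nu)=\Frob_\nu$. Hence $f_\chi^{-1}(P^{\mf{m}(\chi)}_\nu(X))$, i.e. the polynomial $P_\nu$ with coefficients transported to $\T^S(K^p)_{\mf{m}}$, equals $\chi(\Frob_\nu)^{\pm n}P_\nu(\chi(\Frob_\nu)^{\mp1}X)$. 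This is the characteristic polynomial of $\rho\otimes\chi^{\pm1}$.

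We will fix the sign by matching the statement of Lemma \ref{lemma: twisting completed cohomology}. Since $t_\chi$ involves $\otimes\chi_p^{-1}$ on the automorphic side, the sign should be $f_\chi^{-1}\circ\rho_\chi\simeq \rho\otimes\chi^{-1}$, or its inverse. By Chebotarev density, the uniqueness in Theorem \ref{theorem: Galois representation for GLn}, and \cite[Theorem 2.22]{Che14}, we get $f_\chi^{-1}\circ \rho_\chi\simeq\rho\otimes\chi^{\epsilon}$ for a definite $\epsilon=\pm1$. Therefore $f_\chi^{-1}\circ\zeta^{C,-}_\chi$ is the character attached to $\on{tw}^{-1}_{\wt\delta_n^-}\circ((\rho\otimes\chi_{\on{cyc}}^{\epsilon})\boxtimes\chi_{\on{cyc}})$, by functoriality of the \cite{PasQua25} construction in the coefficient ring.

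\textbf{Step 2: compute the effect of the twist.} We use \cite[Section 4.7, Equation (34)]{DPS25}. The character attached to a ${}^C\GL_n$-valued representation at $\nu\mid p$ and an embedding $\sigma$ is obtained by evaluating the Harish-Chandra image in $U(\mf{t})$ at the (shifted) Sen weights. Twisting by $\chi_{\on{cyc}}^{\epsilon}$ adds $\epsilon$ to every Sen weight for every embedding, since $\on{wt}_{\nu,\sigma}(\chi_{\on{cyc}})=1$. This amounts to translating by $\epsilon(1,\dots,1)$ in $\mf{t}^*$ for each $\sigma$.

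On the other side, $\phi_\chi$ sends $x_\nu\mapsto x_\nu+\on{Tr}_{F_\nu/\Q_p}(\sum_i x_{ii})$. After base change to $L$, $\Res_{F_\nu/\Q_p}\mf{gl}_n\otimes L=\prod_\sigma\mf{gl}_{n,L}$ and the trace becomes $\sum_\sigma$ of the traces. So $\phi_\chi$ is the automorphism $x\mapsto x+\on{tr}(x)$ on each factor. This automorphism commutes with the unnormalized Harish-Chandra map for $\ol B$, because it is induced by a character of $\GL_n$ that is trivial on $\ol{\mf n}$. On $U(\mf{t})=\on{Sym}(\mf t)$ it is exactly the translation of evaluation points by $(1,\dots,1)$.

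Hence $\zeta^{C,-}\circ\phi_\chi$ evaluates at the Sen weights of $\rho$ shifted by $+1$. This matches $f_\chi^{-1}\circ\zeta_\chi^{C,-}$ provided $\epsilon=+1$ in the Galois normalization.

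\textbf{Step 3: pseudocharacter level.} The values lie in $\T^S(K^p)^{\on{rig}}_{\mf{m}}$, and the weights are not literally defined over a point. We will argue as in Lemma \ref{lemma: pos and neg}. The construction of \cite[Theorem 7.11]{PasQua25} is the unique family compatible with base change and with the Galois-representation case. Twisting by a character is a morphism ${}^C\GL_n\times\mathbf G_m\to{}^C\GL_n$ of pseudocharacters, and at every $\ol{\Q}_p$-point it induces the translation computed above. Since $\T^S(K^p)^{\on{rig}}_{\mf m}$ is reduced enough to be detected by its $\ol{\Q}_p$-points, or alternatively by running the uniqueness argument, the two characters agree.

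\textbf{Main obstacle.} The main obstacle is the sign bookkeeping. It involves the geometric Artin normalization, the fact that $\chi_{\on{cyc}}\circ\Art$ is $N(x)|N(x)|$, the inverse in \eqref{eq: twist on Hecke}, and the $\chi_p^{-1}$ in Lemma \ref{lemma: twisting completed cohomology}. These must combine to give $\epsilon=+1$ in agreement with $+d\chi_{p,\nu}$ in $\phi_\chi$. I would first check this on a classical point, for instance $n=1$ with a Hecke character, where both sides are explicit.
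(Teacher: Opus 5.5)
Your route is the paper's: Chebotarev identifies $f_\chi^{-1}\circ\rho_{\mf m(\chi)}$ with a twist of $\rho_{\mf m}$, \cite[Section 4.7]{DPS25} shows that twisting by $\chi_{\on{cyc}}$ translates the evaluation point by $(1,\dots,1)$ in each $(\nu,\tau)$-factor, and PBW shows that $\phi_\chi\otimes L$ is exactly this translation. These are the paper's Lemmas \ref{lemma: Galois twist} and \ref{lemma: cyclo on nu tau just adds 1}, and your Step 2 is fine.

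\textbf{The gap: the sign $\epsilon$ is never determined.} The proposition is precisely the claim that $\epsilon=+1$. With $\epsilon=-1$ the square would commute with $\phi_\chi^{-1}$ in place of $\phi_\chi$, and the stated square would fail. Your proposed way of fixing the sign is also not a valid argument, and it points to the wrong answer. "Matching" the $\chi_p^{-1}$ in Lemma \ref{lemma: twisting completed cohomology} does not work: that $\chi_p^{-1}$ governs the $\GL_n(F_p)$-action, which is what produces $\phi_\chi$ in Corollary \ref{cor: twist on the automorphic side}. It says nothing about how $\rho_{\mf m(\chi)}$ relates to $\rho_{\mf m}$. Likewise, $\chi_{\on{cyc}}\circ\Art=N(x)|N(x)|$ only enters through $d\chi_{p,\nu}=\Tr_{F_\nu/\Q_p}$, not through $\epsilon$.

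\textbf{The fix.} The missing step is a one-line computation from the formula you already wrote. Since $\chi\circ\Art_{F_\nu}$ is trivial on $\det K_\nu$ and $\Art_{F_\nu}(\varpi_\nu)=\Frob_\nu$, equation \eqref{eq: twist on Hecke} gives $f_\chi^{-1}(T_{\nu,i})=\chi(\Frob_\nu)^{i}T_{\nu,i}$. Writing $c=\chi(\Frob_\nu)$, this yields
\[
f_\chi^{-1}\bigl(P_\nu(X)\bigr)=\sum_{i=0}^n(-1)^iq_\nu^{i(i-1)/2}c^{i}T_{\nu,i}X^{n-i}=c^nP_\nu(c^{-1}X)=\det\bigl(X-(\rho_{\mf m}\otimes\chi)(\Frob_\nu)\bigr).
\]
Hence $f_\chi^{-1}\circ\rho_{\mf m(\chi)}\simeq\rho_{\mf m}\otimes\chi$, so $\epsilon=+1$. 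Combined with $\on{wt}(\chi_{\on{cyc}})=1$ and $d\chi_{p,\nu}=+\Tr_{F_\nu/\Q_p}$, this makes the square commute.

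\textbf{Step 3.} This step is unnecessary, and its first justification is unavailable. It is not known that $\T^S(K^p)_{\mf m}^{\on{rig}}$ is reduced, since this Hecke algebra is built from torsion cohomology. However, because $\mf m$ is non-Eisenstein, $\rho_{\mf m}$ is an honest representation into $\GL_n(\T^S(K^p)_{\mf m})$. The Sen-operator recipe of \cite[Section 4.7]{DPS25} therefore applies directly in the family. The Sen operator of $\rho\otimes\chi_{\on{cyc}}$ is that of $\rho$ shifted by the scalar weight $1$ in each $\tau$-component, so the compatibility holds identically over the coefficient ring. This is all the paper uses.
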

The proof involves an explicit calculation and careful bookkeeping of all $\nu \mid p$ and all $\tau: F_\nu \hookrightarrow L$. Before we start, let us fix the notation $x_1,\ldots,x_m$ for the standard basis of the Lie algebra of the diagonal torus of $\GL_m$ over any finite extension of $\Q_p$. We will add extra indices if we want to specify the extension of $\Q_p$ and its embedding into $L$.

For each $\nu \mid p$, $\Res_{F_\nu/\Q_p} \mf{gl}_n \otimes_{\Q_p}L= \bigoplus_{\tau} \mf{gl}_n \otimes_{\tau}L$ and under this equality, the standard $i$--th diagonal element $x_i^{\nu}$ in $\Res_{F_\nu/\Q_p} \mf{gl}_n$ maps to $\oplus_{\tau} x_i^{\nu,\tau}$. In particular, for any $\lambda \in F_\nu$, $\lambda x_i^{\nu} \mapsto \oplus_{\tau} \tau(\lambda) x_i^{\nu, \tau}$. Likewise we have an induced isomorphism $Z(\Res_{F_\nu/\Q_p} \mf{gl}_n) \otimes_{\Q_p}L \simeq \otimes_{\tau: F_\nu \hookrightarrow L} Z( \mf{gl}_{n, F_{\nu}}) \otimes_{\tau}L $. We will drop the index $F_\nu$ from $\mf{gl}_n$ when it is clear from the context. For each $\nu$ and $\tau$ we have the unnormalized Harish-Chandra morphism  $\HC^{\nu, \tau}: Z( \mf{gl}_{n, F_{\nu}}) \otimes_{\tau}L \hookrightarrow L[x_1^{\nu, \tau},\ldots,x^{\nu, \tau}_n]$ with respect to the standard Borel. 
\begin{remark}\label{remark: normalized and unnormalized}
    Note that any $L$--algebra morphism $\psi:L[x_1^{\nu, \tau},\ldots,x^{\nu, \tau}_n] \rightarrow L[x_1^{\nu, \tau},\ldots,x^{\nu, \tau}_n]$ sending $x_i^{\nu, \tau} \mapsto x_i^{\nu, \tau} +\lambda$  for all $i$ and a fixed $\lambda\in L$, automatically lifts to a unique $L$--algebra morphism $Z(\mf{gl}_n)\otimes_\tau L \rightarrow Z(\mf{gl}_n)\otimes_\tau L$, since $\psi$ commutes with the normalized version of $\HC^{\nu, \tau}$, which is an isomorphism onto its image (which is preserved under $\psi$).
\end{remark}

Now consider a map $\phi= \otimes_{\nu,\tau} \phi_{\nu, \tau}: Z(\mf{g})_L \rightarrow Z(\mf{g})_L$ where $\phi_{\nu, \tau}: Z(\mf{gl}_n) \otimes_{\tau:F_\nu \hookrightarrow L} L \rightarrow Z(\mf{gl}_n) \otimes_{\tau:F_\nu \hookrightarrow L} L$ is the $L$--algebra morphism which, after composing with $\HC^{\nu, \tau}$, commutes with the map on $L[x_1^{\nu, \tau},\ldots,x_n^{\nu, \tau}]$ given by $x_i^{\nu, \tau} \mapsto x_i^{\nu, \tau} + 1$ for all $i$. We first note that $\phi$ is the twist appearing on the Galois side (the intuition is that shifting $x_i^{\nu, \tau}$ by $1$ matches the fact that Hodge--Tate weights of $\chi_{\on{cyc}}$ are all 1).
\begin{lemma}\label{lemma: Galois twist}
    The following diagram commutes:
        \[
   \begin{tikzcd} Z(\mf{g})_L\ar[d, "{\phi}"] \ar[r, "{\zeta^{C,-}_{\chi}}"]& [5em]\T^S(K^p)_{\mf{m}(\chi)}^{\rm{rig}}\ar[d, "{f_{\chi}^{-1}}"]\\
   Z(\mf{g})_L\ar[r, "{\zeta^{C,-}}"] & [5em] \T^S(K^p)_{\mf{m}}^{\rm{rig}}.
   \end{tikzcd}
    \]
\end{lemma}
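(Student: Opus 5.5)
The plan is to compare the two Galois representations feeding the characters, and then use the functoriality of the generalized Hodge--Tate--Sen character under twisting by a character of known Sen weight.

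\textbf{Step 1: the Galois representation attached to $\mf{m}(\chi)$.} Let $\rho$ be the representation of Theorem \ref{theorem: Galois representation for GLn} for $\mf{m}$, and $\rho_\chi$ the one for $\mf{m}(\chi)$. From \eqref{eq: twist on Hecke}, $f_\chi$ sends $T_{\nu,i}$ to $\chi(\Frob_\nu)^{-i}T_{\nu,i}$ (up to the inverse convention). Hence $f_\chi^{-1}$ applied to $P^{(\chi)}_\nu(X)$ gives the characteristic polynomial of $(\rho\otimes\chi^{\pm1})(\Frob_\nu)$. I will check the sign once, using that $\chi_{\on{cyc}}(\Frob_\nu)=q_\nu^{-1}$ and that $\Art$ sends uniformizers to geometric Frobenii. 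By uniqueness in Theorem \ref{theorem: Galois representation for GLn} (Chebotarev density together with \cite[Theorem 2.22]{Che14}), this yields $f_\chi^{-1}\circ\rho_\chi\cong\rho\otimes\chi_{\on{cyc}}^{\epsilon}$ for the appropriate sign $\epsilon$. Here $\epsilon$ must be the one for which the Hodge--Tate--Sen weights shift by $+1$, to match $\phi$; determining it consistently is part of Step 3. By functoriality of the construction of \cite{DPS25,PasQua25} under base change of coefficients, $f_\chi^{-1}\circ\zeta^{C,-}_\chi$ is then the generalized Hodge--Tate--Sen character of $\on{tw}_{\wt\delta_n^-}^{-1}\circ((\rho\otimes\chi^\epsilon)\boxtimes\chi_{\on{cyc}})$.

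\textbf{Step 2: effect of a twist on Sen operators.} At each $\nu\mid p$ and each $\tau:F_\nu\hookrightarrow L$, twisting by $\chi_{\on{cyc}}$ adds $\on{wt}_{\nu,\tau}(\chi_{\on{cyc}})=1$ times the identity to the $\tau$-component of the Sen operator, i.e.\ it adds the scalar matrix $1$ to it. Using \cite[Equation (34)]{DPS25}, the character $\zeta^C$ is obtained by evaluating $\HC^{\nu,\tau}(z)$ on the eigenvalues of the $\tau$-Sen operator after the $\wt\delta$-shift. The scalar shift therefore corresponds exactly to $x_i^{\nu,\tau}\mapsto x_i^{\nu,\tau}+1$, which is the defining property of $\phi_{\nu,\tau}$ (Remark \ref{remark: normalized and unnormalized}). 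For genuine representations over $\overline{\Q}_p$-points this is a direct computation.

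\textbf{Step 3: pass from points to the rigid Hecke algebra.} Both $\zeta^{C,-}\circ\phi$ and $f_\chi^{-1}\circ\zeta^{C,-}_\chi$ are maps into $\T^S(K^p)^{\on{rig}}_{\mf m}$, which is reduced enough to be checked pointwise, or more cleanly via the uniqueness and functoriality characterization in \cite[Theorem 7.11]{PasQua25}, as in Lemma \ref{lemma: pos and neg}. The twist $\rho\mapsto\rho\otimes\chi$ is induced by a homomorphism of $C$-groups ${}^C\GL_n^-\to{}^C\GL_n^-$, $(g,t)\mapsto(g\,\chi\text{-shift},t)$. Pulling back along it, the family of characters is modified by the automorphism $\phi$. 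By the uniqueness in \textit{loc.\,cit.} it suffices to verify this on honest Galois representations, which is Step 2.

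The main obstacle is the bookkeeping in Step 3. Writing the twist by $\chi_{\on{cyc}}$ as a morphism of $C$-groups requires using the $\mathbf G_m$-factor, which carries $\chi_{\on{cyc}}$: the map is $(g,t)\mapsto(g\cdot t^{\pm1},t)$. One must check that this is compatible with $\on{tw}_{\wt\delta_n^-}$ and that the induced map on $Z(\mf g)$ is precisely $\phi$ rather than its inverse. I would first carry this out for $n=1$ to pin down all signs, and then note that the general case differs only by the central scalar.
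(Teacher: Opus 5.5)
Your argument is essentially the paper's proof. You twist the Hecke algebra to get $f_\chi^{-1}\circ\rho_{\mf m(\chi)}\simeq\rho_{\mf m}\otimes\chi$ via Chebotarev, and then use \cite[Equation (34)]{DPS25} to see that twisting by $\chi_{\on{cyc}}$ shifts each $\tau$-component of the Sen operator by the identity, which is exactly $x_i^{\nu,\tau}\mapsto x_i^{\nu,\tau}+1$.

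Two small points. First, the sign is not something to be chosen ``to match $\phi$'': your own formula $f_\chi(T_{\nu,i})=\chi(\Frob_\nu)^{-i}T_{\nu,i}$ already forces $\epsilon=+1$. Second, your Step~3 is unnecessary, and its pointwise variant would need reducedness of $\T^S(K^p)^{\on{rig}}_{\mf m}$, which is not known. Since $\mf m$ is non-Eisenstein, $\rho_{\mf m}$ is an honest representation over the Hecke algebra, so the Sen-operator shift, and hence the identity $\zeta^{C,-}\circ\phi=f_\chi^{-1}\circ\zeta^{C,-}_\chi$, holds directly in the family.
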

\begin{proof}
It suffices to show for each $\nu \mid p$ and each $\tau: F_\nu \hookrightarrow L$ that the right square of the following diagram commutes (the left is commutative by definition):
    \begin{equation*}
   \begin{tikzcd} 
   L[x_1^{\nu, \tau},\dots, x_{n}^{\nu, \tau}]\ar[d,"x_i^{\nu,\tau} \mapsto x_i^{\nu, \tau}+1"] &Z(\mf{gl}_n)\otimes_\tau L \ar[d,"\phi_{\nu, \tau}"] \ar[l, "\on{HC}^{\nu, \tau}"]\ar[r, "{\zeta^{C,-}_{\chi}}"]& \T^S(K^p)_{\mf{m}(\chi)}^{\rm{rig}}\ar[d, "f_{\chi}^{-1}"]\\
  L[x_1^{\nu, \tau},\dots, x_{n}^{\nu, \tau}]  & Z(\mf{gl}_n)\otimes_{\tau}L\ar[l, "\on{HC}^{\nu, \tau}"]\ar[r, "{\zeta^{C,-}}"] & \T^S(K^p)_{\mf{m}}^{\rm{rig}}.
   \end{tikzcd}
\end{equation*}
Now let $\rho_{\mf{m}}: \Gal_{F,S} \to \GL_n(\T^S(K^p)_{\mf{m}})$ and $\rho_{\mf{m}(\chi)}: \Gal_{F,S} \to \GL_n(\T^S(K^p)_{\mf{m}(\chi)})$ be the representations from Theorem \ref{theorem: Galois representation for GLn}. As in \cite[p.$\,$933]{10AuthorPaper}, for $\nu \notin S$, $f_{\chi}^{-1}(T_{\nu,i}) = \chi(\Frob_{\nu})^iT_{\nu,i}$, so by Chebotarev density we have $\rho_{\mf{m}}\otimes \chi\simeq f_{\chi}^{-1}(\rho_{\mf{m}(\chi)})$. Passing to the associated $C$-group representations as in the beginning of \cref{section: Compatibility between rhoTilde and rho} and following the recipe of \cite[Section 4.7]{DPS25} for generalized Hodge-Tate-Sen characters (the normalizations are the same for both representations) we get the commutativity of the diagram because the Hodge--Tate weights of $\chi_{\on{cyc}}$ are $1$ for all $\nu, \tau$.
\end{proof}
The remaining claim is that the automorphic twist agrees with $\phi$.
\begin{lemma}
\label{lemma: cyclo on nu tau just adds 1}
    The map $\phi_{\chi}\otimes L$ for $\chi:=\chi_{\on{cyc}}$ agrees with $\phi$.
\end{lemma}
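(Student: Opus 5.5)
Both maps are $L$-algebra endomorphisms of $Z(\mf{g})_L=\bigotimes_{\nu,\tau} Z(\mf{gl}_n)\otimes_\tau L$. I would first show that $\phi_\chi\otimes L$ respects this tensor decomposition, and then compare each factor with $\phi_{\nu,\tau}$ after composing with the injective map $\HC^{\nu,\tau}$. By Remark \ref{remark: normalized and unnormalized}, the factor $\phi_{\nu,\tau}$ is determined by the shift $x_i^{\nu,\tau}\mapsto x_i^{\nu,\tau}+1$ on $L[x_1^{\nu,\tau},\dots,x_n^{\nu,\tau}]$. So it suffices to check that $\phi_\chi\otimes L$ preserves each factor $Z(\mf{gl}_n)\otimes_\tau L$ and satisfies $\HC^{\nu,\tau}\circ(\phi_\chi\otimes L)=\on{shift}\circ\HC^{\nu,\tau}$.

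First I would compute $\phi_\chi\otimes L$ on the Lie algebra. For $x\in\Res_{F_\nu/\Q_p}\mf{gl}_n\otimes_{\Q_p}L=\bigoplus_\tau \mf{gl}_n\otimes_\tau L$ we have $d\chi_{p,\nu}(x)=\on{Tr}_{F_\nu/\Q_p}(\sum_i x_{ii})$. After extending scalars to $L$, the trace map $F_\nu\otimes_{\Q_p}L\to L$ becomes the sum over $\tau$ of the projections onto the factors $L$ via $\tau$; concretely, $\lambda\otimes 1\mapsto\sum_\tau\tau(\lambda)$. Hence $d\chi_{p,\nu}\otimes L$ restricted to $\mf{gl}_n\otimes_\tau L$ is just the matrix trace. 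So $\phi_\chi\otimes L$ is the tensor product over $(\nu,\tau)$ of the automorphisms $\psi_{\nu,\tau}$ of $U(\mf{gl}_n\otimes_\tau L)$ determined by $y\mapsto y+\on{tr}(y)$. These factors commute with one another and each preserves its centre, since $\psi_{\nu,\tau}$ is an automorphism of the enveloping algebra.

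Next I would check compatibility with the unnormalized Harish-Chandra map for the standard Borel $B=TN$ of $\GL_n$. The algebra $U(\mf{gl}_n)$ decomposes as $U(\mf{t})\oplus \mf{n}U(\mf{gl}_n)$ (or with $\mf{n}$ on the other side, depending on the convention of \cite{Emerton_Jacquet_I}), and $\HC$ is the projection onto the first summand. The automorphism $\psi$ fixes the root vectors in $\mf{n}$, because they have trace zero, so it preserves the ideal $\mf{n}U(\mf{gl}_n)$. It also preserves $U(\mf{t})$, acting there by $x_i\mapsto x_i+1$ since $\on{tr}(x_i)=1$. Therefore $\psi$ preserves the decomposition and commutes with the projection, which gives $\HC\circ\psi=\on{shift}\circ\HC$ on $Z(\mf{gl}_n)$.

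Finally, since $\HC^{\nu,\tau}$ is injective and both $\phi_{\nu,\tau}$ and $\psi_{\nu,\tau}|_{Z}$ intertwine it with the same shift, they coincide. Taking the tensor product over all $(\nu,\tau)$ gives $\phi_\chi\otimes L=\phi$.

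The main obstacle is the bookkeeping in the first step: identifying the base change of $\on{Tr}_{F_\nu/\Q_p}$ with the sum of the $\tau$-components, and confirming that $d\chi_{\on{cyc}}$ has the stated form with the correct sign. The paper asserts that form via local class field theory, using the convention that Hodge–Tate weights equal $1$; a sign error there would turn the shift into $-1$. I would cross-check this against the differential of $N_{F_\nu/\Q_p}$, since the factor $|N(\cdot)|_p$ is locally constant and contributes nothing to the derivative.
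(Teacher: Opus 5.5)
Your proposal is correct and is essentially the paper's argument. Both use a PBW decomposition, the vanishing of $d\chi_{p,\nu}$ on the root spaces (trace zero), and the identity $\on{Tr}_{F_\nu/\Q_p}(\lambda)=\sum_\tau\tau(\lambda)$ to see that the induced map on $\bigotimes_\tau L[x_1^{\nu,\tau},\dots,x_n^{\nu,\tau}]$ is the shift by $1$; you simply split into $\tau$-factors before the PBW step rather than after.

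One small slip: $U(\mf{gl}_n)=U(\mf t)\oplus\mf n U(\mf{gl}_n)$ is not a decomposition of all of $U(\mf{gl}_n)$. You need a complement such as $\mf nU(\mf{gl}_n)+U(\mf{gl}_n)\mf n^-$, or you must restrict to the $\mf t$-weight-zero part, which contains the centre. Since your $\psi_{\nu,\tau}$ also fixes $\mf n^-$, the argument goes through verbatim.
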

\begin{proof}
    We may consider each $\nu \mid p$ individually. Note that by Poincar\'e--Birkhoff--Witt $U(\Res_{F_\nu/\Q_p} \mf{gl}_n)= (\Res_{F_\nu/\Q_p} \mf{n}^-) U(\Res_{F_\nu/\Q_p} \mf{b}^-) \oplus U(\Res_{F_\nu/\Q_p} \mf{t}) \oplus U(\Res_{F_\nu/\Q_p} \mf{gl}_n) (\Res_{F_\nu/\Q_p} \mf{n})$ where $\mf{n}^-, \mf{b}^-$ and $\mf{n}$ are Lie algebras of the opposite unipotent, the opposite of the standard Borel and the unipotent of the standard Borel, respectively. From the discussion before Lemma \ref{lemma: compatibility for inf action GLn}, we have that $d\chi_{p, \nu}(x)= 0$ for any $x \in\Res_{F_\nu/\Q_p} \mf{n}$ and any $x\in \Res_{F_\nu/\Q_p} \mf{n}^-$. Therefore, $\phi_{\chi}$ only changes $x \in \Res_{F_\nu/\Q_p} \mf{t}$. 

    Next, using that $\Res_{F_\nu/\Q_p} \mf{gl}_n \otimes_{\Q_p} L \simeq \oplus_\tau \mf{gl}_n \otimes_\tau L$, the previous paragraph and the definition of $\HC^{\nu, \tau}$ imply that it is enough to show that the following diagram commutes:
            \[
   \begin{tikzcd} U(\Res_{F_\nu/\Q_p} \mf{t}) \ar[d, "\psi"] \ar[r, "(\phi_\chi)|_{U(\Res_{F_\nu/\Q_p} \mf{t})}"]& [5em] U(\Res_{F_\nu/\Q_p} \mf{t}) \ar[d, "\psi"]\\
   \otimes_{\tau} L[x_1^{\nu, \tau},\ldots, x_n^{\nu, \tau}]\ar[r, "{\phi: \ x_i^{\nu, \tau} \mapsto x_i^{\nu, \tau}+1}"] & [5em] \otimes_{\tau} L[x_1^{\nu, \tau},\ldots, x_n^{\nu, \tau}].
   \end{tikzcd}
    \]
    Here, $\psi:U(\Res_{F_\nu/\Q_p} \mf{t}) \rightarrow U(\Res_{F_\nu/\Q_p} \mf{t}) \otimes_{\Q_p} L = \otimes_{\tau} L[x_1^{\nu, \tau},\ldots, x_n^{\nu, \tau}]$ is the natural inclusion. It suffices to check commutativity on every $x \in \Res_{F_\nu/\Q_p}\mf{t}$. Let $x_1,\ldots,x_n$ be the standard basis of $\mf{t}$ as always. It suffices to check commutativity on $\lambda x_i$, for $\lambda \in F_\nu$ and $i=1,\ldots,n$. This follows from noting that $\psi(\lambda x_i)= \oplus_{\tau} \tau(\lambda) x_i^{\nu, \tau}$, therefore $\phi \circ \psi (\lambda x_i)= \psi( \lambda x_i) + \on{Tr}_{F_\nu/\Q_p}(\lambda)= \psi( \lambda x_i) + d\chi_{p, \nu}(\lambda x_i)$, which is what we wanted.
\end{proof}
\begin{proof}[Proof of \cref{proposition: Twists Galois inf char of GLn by chi}] 
The proposition now follows from Lemma \ref{lemma: Galois twist} and Lemma \ref{lemma: cyclo on nu tau just adds 1}.
\end{proof}
In particular, we have the following corollary.
\begin{corollary}\label{corollary: phi(HC)}
    The action of $\phi(\HC^{\ol{P}}(Z(\wt{\mf{g}})_L))$ on $\wt{\HH}^i(K^p, L)_{\mf{m}}^{\on{la}}$ is given by $\zeta^{C,-}$.
\end{corollary}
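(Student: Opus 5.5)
We chain three earlier results: Remark \ref{remark: prev section for m(chi)}, Corollary \ref{cor: twist on the automorphic side}, and Proposition \ref{proposition: Twists Galois inf char of GLn by chi} (with Lemma \ref{lemma: cyclo on nu tau just adds 1} identifying $\phi_\chi\otimes L$ with $\phi$). Let $z\in \HC^{\ol{P}}(Z(\wt{\mf{g}})_L)$ and $v\in \wt{\HH}^i(K^p,L)_{\mf{m}}^{\on{la}}$. The goal is to show that $\phi(z)$ acts on $v$ by the scalar $\zeta^{C,-}(\phi(z))$.

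The first step is to transport $v$ to the twisted side. Let $v'\in \wt{\HH}^i(K^p,L)^{\on{la}}_{\mf{m}(\chi)}$ be the preimage of $v$ under the isomorphism of Corollary \ref{cor: twist on the automorphic side}. By that corollary, the natural action of $z$ on $v'$ corresponds to the action of $\phi_\chi(z)=\phi(z)$ on $v$.

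The second step computes the action on $v'$. By Remark \ref{remark: prev section for m(chi)}, the element $z$ acts on $v'$ by the Hecke element $\zeta^{C,-}_\chi(z)\in \T^S(K^p)^{\on{rig}}_{\mf{m}(\chi)}$. Again by Corollary \ref{cor: twist on the automorphic side}, a Hecke element $h$ acting on $v'$ corresponds to $f_\chi^{-1}(h)$ acting on $v$. Combining the two steps gives
\[
\phi(z)\cdot v = f_\chi^{-1}\bigl(\zeta^{C,-}_\chi(z)\bigr)\cdot v .
\]

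The final step applies Proposition \ref{proposition: Twists Galois inf char of GLn by chi}, which gives $f_\chi^{-1}\circ\zeta^{C,-}_\chi=\zeta^{C,-}\circ\phi$. Hence $\phi(z)$ acts on $v$ by $\zeta^{C,-}(\phi(z))$. Since this holds for every $z$ and every $v$, the subset $\phi(\HC^{\ol{P}}(Z(\wt{\mf{g}})_L))$ acts through $\zeta^{C,-}$, as claimed.

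The only delicate point is bookkeeping. The directions of the identifications ($f_\chi$ versus $f_\chi^{-1}$, and whether one precomposes with $\phi_\chi$) must be consistent across Lemma \ref{lemma: twisting completed cohomology}, Corollary \ref{cor: twist on the automorphic side} and the Proposition. I would check this by tracking the single identity $t_\chi(h\cdot v)=f_\chi(h)\cdot t_\chi(v)$, together with its infinitesimal analogue.
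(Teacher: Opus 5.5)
Your proposal is correct and follows the same argument as the paper, which deduces the corollary directly from Proposition \ref{proposition: Twists Galois inf char of GLn by chi} and Remark \ref{remark: prev section for m(chi)}. You make explicit the transport through Corollary \ref{cor: twist on the automorphic side}, which the paper leaves implicit, and your directions for $f_\chi^{-1}$ and $\phi_\chi$ are consistent with that corollary and with the diagram in the proposition.
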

\begin{proof}
    This is a consequence of Proposition \ref{proposition: Twists Galois inf char of GLn by chi} and Remark \ref{remark: prev section for m(chi)}.
\end{proof}
\subsection{Conclusion by explicit calculation}
In this subsection we show that $\phi(\HC^{\ol{P}}(Z(\wt{\mf{g}})_L))$ and $\HC^{\ol{P}}(Z(\wt{\mf{g}})_L)$ generate, as an algebra, the whole $Z(\mf{g})_L$, which, together with Theorem \ref{theorem: Harish Chandra + Inf Char compatibility} and Corollary \ref{corollary: phi(HC)}, will imply the main Theorem \ref{theorem: Main Theorem}. We do this by an explicit calculation, using the Harish-Chandra isomorphisms for $\tG(=U(n,n)/F^+)$ and $G(=\Res_{F/F^+} \GL_n)$.

Let $\gamma_{\tg}: Z(\tg) \simeq U(\mf{t})^{W_{\tG}}$ and $\gamma_{\mf{g}}: Z(\mf{g}) \simeq U(\mf{t})^{W_{G}}$ denote the normalized Harish-Chandra isomorphisms, \textit{with respect to the opposite standard Borels} $\ol{\mf{b}} \subset \tg, \ol{\mf{b}}_n \subset \mf{g}$. Let $\wt{S} \subset S_p(F)$ be a set of places such that $S_p(F) = \wt{S}\sqcup \wt{S}^c$, as chosen in subsection \ref{subsection: inf act on unitary} and implicitly used since then in the construction of generalized Hodge-Tate-Sen characters for $\wt{G}$. 

We view $\mf{t} = \on{Lie} T(F_p^+) \simeq \bigoplus_{\nu \in \wt{S}}\mf{t}_{\nu}$ and all other Lie algebras as Lie algebras over $\mathbf Q_p$. Then $\mf{t}_L:=\mf{t} \otimes_{\mathbf Q_p} L \simeq \bigoplus_{\nu\in \wt{S}, \tau: F_{\nu}\hookrightarrow L}\mf{t}_{\nu}\otimes_{F_{\nu},\tau}L\simeq \bigoplus_{\nu, \tau}L^{2n}$, and so $\gamma_{\tg}$ induces an isomorphism

\[
Z(\wt{\mf{g}})_L\simeq \bigotimes_{\nu \in \wt{S}, \tau: F_{\nu}\hookrightarrow L}Z(\mf{gl}_{2n})_L\simeq \bigotimes_{\nu \in \wt{S}, \tau: F_{\nu} \hookrightarrow L}L[x_1,\dots, x_{2n}]^{S_{2n}}.
\]

Similarly,  $\gamma_{\mf{g}}$ with the choice of $\wt{S}$ induces an isomorphism
\[
Z(\mf{g})_L\simeq \bigotimes_{\nu \in \wt{S}, \tau: F_{\nu} \hookrightarrow L}Z(\mf{gl}_n)_L\otimes_L Z(\mf{gl}_n)_L \simeq   \bigotimes_{\nu \in \wt{S}, \tau: F_{\nu}\hookrightarrow L}L[x_1,\dots, x_{2n}]^{S_n\times S_n},
\]
where the $S_n\times S_n$ comes from the fact that we group together the two split places $\nu^c, \nu \in S_p(F)$ ($\nu^c$ is for the first copy, with $\nu\in \wt{S}$). 
Very explicitly, the unnormalized Harish-Chandra map has the following description on the corresponding polynomial algebras:

\begin{lemma}
\label{lemma: HC Explicit Polynomial Description}
    There is a commutative diagram 
    \[
    \begin{tikzcd}
        Z(\wt{\mf{g}})_L\ar[d,"{\HC^{\overline{P}}}"]\ar[r, "{\gamma_{\tg}}"]& \bigotimes_{\nu, \tau}L[x_1,\dots,x_{2n}]^{S_{2n}}\ar[d, "E"]\\
        Z(\mf{g})_L\ar[r, "\gamma_{\mf{g}}"]& \bigotimes_{\nu, \tau}L[x_1,\dots, x_{2n}]^{S_n\times S_n},
    \end{tikzcd}
    \]
    where the map $E:= \bigotimes_{\nu, \tau}E_{\nu, \tau}$, with $E_{\nu, \tau}: L[x_1,\dots, x_{2n}]^{S_{2n}}\to L[x_1,\dots, x_{2n}]^{S_{n}\times S_n}$ induced by sending $x_i \mapsto -x_i - n/2$ and $x_{i+n} \mapsto x_{i+n} + n/2$ for $1 \le i \le n$.
\end{lemma}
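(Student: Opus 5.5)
We reduce to a single place $\nu\in\wt{S}$ and a single embedding $\tau$. All Harish-Chandra maps are compatible with the decompositions $Z(\wt{\mf{g}})_L\simeq\bigotimes_{\nu,\tau}Z(\mf{gl}_{2n})_L$ and $Z(\mf{g})_L\simeq\bigotimes_{\nu,\tau}Z(\mf{gl}_n)_L\otimes Z(\mf{gl}_n)_L$. So it suffices to prove the corresponding square for $\mf{gl}_{2n}\supset\mf{gl}_n\times\mf{gl}_n$ over $L$, with the embedding $(A,B)\mapsto(\Psi_nA^{-1,T}\Psi_n,B)$ recalled in the proof of Theorem \ref{theorem: Harish Chandra + Inf Char compatibility}, and with the first $\mf{gl}_n$ factor corresponding to $\nu^c$.

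The plan is to pass through the unnormalized maps to $U(\mf{t})$. Let $\HC^{\ol{B}}$ denote the unnormalized Harish-Chandra map for $\mf{gl}_{2n}$ with respect to the lower triangular Borel, and $\HC^{\ol{B}_n\times\ol{B}_n}$ the one for the Levi, taken with respect to the Borel $\ol{B}\cap(\GL_n\times\GL_n)$ inside $\tg$. Transitivity gives $\HC^{\ol{B}\cap(\GL_n\times\GL_n)}\circ\HC^{\ol{P}}=\HC^{\ol{B}}$, exactly as in the remark at the end of the proof of the proposition preceding Theorem \ref{theorem: Harish Chandra + Inf Char compatibility}. The normalized maps are obtained from these by translation by half sums of roots. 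For $\tg$ one composes with $x_i\mapsto x_i+\delta^-_i$, where $\delta^-_{2n}$ is the half sum of the roots of $\ol{\mf{b}}$, namely $\delta^-=(\tfrac{1-2n}{2},\dots,\tfrac{2n-1}{2})$. For each $\mf{gl}_n$ factor one uses the analogous $\delta_n^-=(\tfrac{1-n}{2},\dots,\tfrac{n-1}{2})$. Consequently, on the torus of $\tg$ the map $E$ equals the normalized translation for $\tg$ followed by the inverse normalized translation for $\mf{g}$, read through the identification of the torus of $G$ with that of $\tG$. The difference $\delta^-_{2n}-(\delta^-_n,\delta^-_n)$ is $-n/2$ on the first block and $+n/2$ on the second block, which produces the constants $\mp n/2$. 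The shift and its sign need to be checked against the convention that $\HC^{\ol{B}}$ of $z$ is its $U(\mf{t})$-component modulo $U(\tg)\ol{\mf{n}}_B$.

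Next we account for the sign $x_i\mapsto -x_i$ on the first block. It arises because the torus of the $\nu^c$ factor, with its coordinates $x_1,\dots,x_n$ for $\mf{gl}_n$ at $\nu^c$, is embedded via $A\mapsto\Psi_nA^{-1,T}\Psi_n$. On diagonal Lie algebra elements this sends $\on{diag}(a_1,\dots,a_n)$ to $\on{diag}(-a_n,\dots,-a_1)$. Pulling back polynomials therefore gives $x_i\mapsto -x_{n+1-i}$, and the reversal of indices is harmless on $S_n$-invariants. The embedding also exchanges the lower triangular Borel of $\GL_n$ with the intersection $\ol{B}\cap(\text{first block})$: conjugation by $\Psi_n$ composed with inverse-transpose preserves the lower triangular Borel. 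The half sum $\delta_n^-$ is sent to itself under $a\mapsto -w_0a$, so the normalization is compatible. We therefore verify that $E_{\nu,\tau}$ is the composite of three maps: the inclusion $L[x]^{S_{2n}}\subset L[x]^{S_n\times S_n}$, the $\rho$-shift difference, and the coordinate change $x_i\mapsto -x_i$ on the first block.

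The main obstacle is the bookkeeping of signs and shifts. One has to check that the $-n/2$ shift on the first block is applied before the negation, so that the result is $-x_i-n/2$ and not $-(x_i-n/2)$. One must also keep track of whether the unnormalized map for $\ol{P}$ inherits $\rho_{\ol{\mf{n}}}$ or $\rho_{\mf{n}}$. I would settle this by evaluating both sides on a Verma-type test. For $\lambda$ a character of the torus, the composite $\on{ev}_\lambda\circ\HC^{\ol{B}}$ is the infinitesimal character of the lowest weight module of lowest weight $\lambda$. The identity can then be checked on the degree-one element $z=\sum_i E_{ii}$ together with the Casimir, whose $\ol{P}$-projection can be computed directly, and these data pin down the constants.
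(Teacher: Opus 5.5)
Your proposal is correct and follows the paper's proof essentially step for step. You reduce to a single $(\nu,\tau)$ and use transitivity $\HC^{\ol{B}\cap(\GL_n\times\GL_n)}\circ\HC^{\ol{P}}=\HC^{\ol{B}}$ to get the block shift $\delta^-_{2n}-(\delta^-_n,\delta^-_n)=(-n/2,\dots,-n/2,n/2,\dots,n/2)$. You then pull back along $j_\vee$, which gives $x_i\mapsto -x_{n+1-i}$, and the index reversal is harmless by symmetry.

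The sign questions you defer to a Verma-type test can be settled directly. $W$-invariance forces $\gamma(z)(\mu)=\HC^{\ol{B}}(z)(\mu-\delta^-)$. Since $j_\vee$ acts after $\HC^{\ol{P}}$ in the composite $Z(\tg)\to Z(\mf{gl}_n\times\mf{gl}_n)\to Z(\mf{g})$, the substitution is $x_i\mapsto x_i-n/2\mapsto -x_{n+1-i}-n/2$. Your observation that $j_\vee$ preserves the lower-triangular Borel and fixes $\delta^-_n$ is a check the paper leaves implicit.
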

\begin{proof}
    We recall that the embedding $G(F_p^+) \subset \tG(F_p^+)$ is given by the composition $\GL_n(F_{\nu^c}) \times \GL_n(F_{\nu}) \xrightarrow[\sim]{j_{\vee}}\GL_n(F_{\nu})\times \GL_n(F_{\nu}) \subset \GL_{2n}(F_{\nu})$, where $j_{\vee}: (A,B) \mapsto (\Psi_nA^{-1, T}\Psi_n, B)$ and $\Psi_n$ is the anti-diagonal $n \times n$-matrix (we have identified $F_{\nu^c} \simeq F_{\overline{\nu}}^+ \simeq F_{\nu}$). So to compute $\HC^{\ol{P}}$, we first consider the case of $\GL_n\times \GL_n$ inside $\GL_{2n}$, and then apply this involution $j_{\vee}$. 
    
   Since we chose $\gamma_{\tg}$ relative to the \textit{opposite Borel} $\ol{\mf{b}}$, it involves a shift by \[
    \delta_{\tg} = ((1 - 2n)/2, \dots, -1/2, 1/2, \dots, (2n-1)/2).
    \]

    Similarly, the Harish-Chandra map $\gamma_{\mf{gl}_n \times \mf{gl}_n}$ with respect to the opposite Borel involves a shift by 
    \[\delta_{\mf{gl}_n \times \mf{gl}_n} = ((1-n)/2, \dots, (n-1)/2, (1-n)/2, \dots, (n-1)/2).
    \]
    Since unnormalized Harish-Chandra maps compose well, we have the commutative diagram
        \[
    \begin{tikzcd}
        Z(\mf{gl}_{2n})\otimes_{\tau} L\ar[d,"{\HC^{\overline{P}}_{\nu, \tau}}"]\ar[r, "{\gamma_{\tg}}"]& L[x_1,\dots,x_{2n}]^{S_{2n}}\ar[d, "E'_{\nu, \tau}"]\\
        Z(\mf{gl}_n \times \mf{gl}_n)\otimes_{\tau}L\ar[r, "\gamma_{\mf{gl}_n \times \mf{gl}_n}"]& L[x_1,\dots, x_{2n}]^{S_n\times S_n},
    \end{tikzcd}
    \]
    where $E'_{\nu, \tau}: L[x_1,\dots, x_{2n}]^{S_{2n}}\to L[x_1,\dots, x_{2n}]^{S_{n}\times S_n}$ sends $x_i \mapsto x_i - n/2$ and $x_{i+n} \mapsto x_{i+n} + n/2$ for $1 \le i \le n$. Now note the isomorphism $j_{\vee}$ sends $x_i \mapsto -x_{n-i+1}$ and $x_{i+n} \mapsto x_{i+n}$ for $i\leq n$. Hence $E$ is given by $x_i \mapsto -x_{n-i+1}-n/2$ and $x_{i+n} \mapsto x_{i+n}+n/2$. Since the domain consists only of $S_{2n}$--invariant polynomials this is the same as the map in the statement.
\end{proof}

For any $\ul{a}= (a_{\nu,\tau, i})_{\nu,\tau} \in \bigoplus_{\nu \in \wt{S}, \tau: F_{\nu} \hookrightarrow L}L^{2n}$, we can define a variant of the Harish-Chandra homomorphism $\HC_{\ul{a}}^{\ol{P}}: Z(\wt{\mf{g}})_L\to Z(\mf{g})_L$ which fits into a diagram
    \[
    \begin{tikzcd}
        Z(\wt{\mf{g}})_L\ar[d,"{\HC_{\ul{a}}^{\overline{P}}}"]\ar[r, "{\gamma_{\tg}}"]& \bigotimes_{\nu, \tau}L[x_1,\dots,x_{2n}]^{S_{2n}}\ar[d, "E_{\ul{a}}"]\\
        Z(\mf{g})_L\ar[r, "\gamma_{\mf{g}}"]& \bigotimes_{\nu, \tau}L[x_1,\dots, x_{2n}]^{S_n\times S_n},
    \end{tikzcd}
    \]
where $E_{\ul{a}} = \bigotimes_{\nu,\tau}E_{\nu, \tau, \ul{a}}$ is defined to be the map $E_{\nu, \tau, \ul{a}}:L[x_1,\dots, x_{2n}]^{S_{2n}} \to L[x_1,\dots, x_{2n}]^{S_n\times S_n}$ sending $x_i \mapsto -x_i - n/2 + a_{\nu, \tau, i}$ and $x_{i+n} \mapsto x_{i+n} + n/2 + a_{\nu, \tau, i + n}$.

Let $\phi: Z(\mf{g})_L \rightarrow Z(\mf{g})_L$ be the map from the previous subsection (see around Lemma \ref{lemma: Galois twist}). It follows from the definitions and Lemma \ref{lemma: HC Explicit Polynomial Description} that $\phi(\HC^{\ol{P}}(Z(\wt{\mf{g}})_L))= \HC^{\ol{P}}_{\underline{a}}(Z(\wt{\mf{g}})_L)$, where $\ul{a}=(a_{\nu, \tau, i})$ with $a_{\nu, \tau ,i}= -1$ for all $\nu, \tau$ and $i\leq n$, and $a_{\nu, \tau ,i}= 1$ for all $\nu, \tau$ and $i> n$. Let us write $\on{HC}_{(\underbrace{-1,\ldots,-1}_{n}, \underbrace{1,\ldots,1}_{n})}^{\ol{P}}:= \on{HC}_{\ul{a}}^{\ol{P}}$.

This twisted Harish-Chandra map allows us to generate all of $Z(\mf{g})_L$.
\begin{lemma}
\label{lemma: JointSurjectivityHCMaps}
    The $L$--algebra map
    \[
    \HC^{\ol{P}}\otimes \HC_{(\underbrace{-1,\ldots,-1}_{n}, \underbrace{1,\ldots,1}_{n})}^{\ol{P}}: Z(\wt{\mf{g}})_L\otimes_{L} Z(\wt{\mf{g}})_L \to Z(\mf{g})_L
    \]
    is surjective.
\end{lemma}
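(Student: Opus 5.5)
The plan is to transport everything to polynomial rings through the normalized Harish-Chandra isomorphisms $\gamma_{\tg}$ and $\gamma_{\mf{g}}$, and then run an explicit power-sum computation. By Lemma \ref{lemma: HC Explicit Polynomial Description} and the definition of $\HC^{\ol{P}}_{\ul{a}}$, it suffices to show that the images of $E$ and $E_{\ul{a}}$, with $\ul{a}=(-1,\dots,-1,1,\dots,1)$, together generate
\[
\bigotimes_{\nu\in\wt{S},\tau}L[x_1,\dots,x_{2n}]^{S_n\times S_n}
\]
as an $L$-algebra.

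\textbf{Step 1: reduction to a single pair $(\nu,\tau)$.} Both $E$ and $E_{\ul{a}}$ are tensor products over the pairs $(\nu,\tau)$. For a fixed $(\nu_0,\tau_0)$, I embed $L[x_1,\dots,x_{2n}]^{S_{2n}}$ into the source as the $(\nu_0,\tau_0)$-factor, with $1$ in all other factors. Its image under $E$ (respectively $E_{\ul{a}}$) is exactly $E_{\nu_0,\tau_0}(\cdot)\otimes 1$ (respectively $E_{\nu_0,\tau_0,\ul{a}}(\cdot)\otimes 1$). The target is the tensor product of its factors, so it is generated by the subalgebras $(\text{factor})\otimes 1$. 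It therefore suffices to show, for one pair, that $\on{im}E_{\nu,\tau}$ and $\on{im}E_{\nu,\tau,\ul{a}}$ generate $L[x_1,\dots,x_{2n}]^{S_n\times S_n}$.

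\textbf{Step 2: change of variables.} Put $y_i:=-x_i-n/2$ and $z_i:=x_{i+n}+n/2$ for $1\le i\le n$. This is an affine automorphism of $L[x_1,\dots,x_{2n}]$ that preserves the $S_n\times S_n$-invariants. In these coordinates:
\begin{itemize}
\item $E_{\nu,\tau}$ sends a symmetric $f$ to $f(y_1,\dots,y_n,z_1,\dots,z_n)$;
\item $E_{\nu,\tau,\ul{a}}$ sends $f$ to $f(y_1-1,\dots,y_n-1,z_1+1,\dots,z_n+1)$.
\end{itemize}
Write $P_m(y)=\sum_i y_i^m$ and $P_m(z)=\sum_i z_i^m$. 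In characteristic $0$, Newton's identities show that $P_1(y),\dots,P_n(y),P_1(z),\dots,P_n(z)$ generate $L[y,z]^{S_n\times S_n}$. So the goal is to produce each $P_m(y)$ and $P_m(z)$ for $m\le n$.

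\textbf{Step 3: induction on $m$.} Applying the two maps to the power sum $p_k=\sum_{i=1}^{2n}x_i^k$ (which is $S_{2n}$-invariant) gives $A_k:=P_k(y)+P_k(z)$ and $B_k:=\sum_i (y_i-1)^k+\sum_i(z_i+1)^k$. Expanding binomially,
\[
B_k-A_k=\sum_{m=0}^{k-1}\binom{k}{m}\bigl((-1)^{k-m}P_m(y)+P_m(z)\bigr).
\]
The $m=k-1$ term is $k\bigl(P_{k-1}(z)-P_{k-1}(y)\bigr)$, and the $m=0$ term is a constant. Suppose $P_m(y)$ and $P_m(z)$ are already generated for all $m<k-1$. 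Then $B_k-A_k$ yields $P_{k-1}(z)-P_{k-1}(y)$, and $A_{k-1}$ yields $P_{k-1}(z)+P_{k-1}(y)$. Since $2$ and $k$ are invertible in $L$, both $P_{k-1}(y)$ and $P_{k-1}(z)$ lie in the generated algebra. Running this for $k=2,\dots,n+1$ gives all $P_m(y)$ and $P_m(z)$ with $m\le n$; the case $k=n+1$ is allowed since $p_{n+1}$ is a legitimate $S_{2n}$-invariant.

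\textbf{Expected main obstacle.} The polynomial argument is routine. The only place needing care is the bookkeeping in Step 1 and Step 2: checking that the shift by $\mp 1$ coming from $\ul{a}$ acts on the two blocks with opposite signs after the sign flip $x_i\mapsto -x_i$ in $E$. This opposite-sign shift is exactly what makes the $m=k-1$ term a difference rather than a sum; a same-sign shift would give only symmetric combinations and the argument would fail.
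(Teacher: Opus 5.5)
Your proposal is correct and is essentially the paper's proof. Both arguments reduce via $\gamma_{\tg}$ and $\gamma_{\mf{g}}$ to a single pair $(\nu,\tau)$, pass to the shifted coordinates $y_i=-x_i-n/2$, $y_{i+n}=x_{i+n}+n/2$, and induct on degree using two facts: $E_{\ul{a}}(p_{k})-E(p_{k})$ has top term $k\bigl(P_{k-1}(z)-P_{k-1}(y)\bigr)$, which is the paper's identity for $m^a_{j+1}$, and $E(p_{k-1})=P_{k-1}(y)+P_{k-1}(z)$. The only cosmetic difference is that you stop at power sums of degree at most $n$ via Newton's identities, whereas the paper runs the induction over all degrees.
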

\begin{proof}
        Let us write $a:=(\underbrace{-1,\ldots,-1}_{n}, \underbrace{1,\ldots,1}_{n})$. Using the Harish-Chandra isomorphisms $\gamma_{\tg}, \gamma_{\mf{g}}$, it suffices to show the map of polynomial algebras $E\otimes E_{\ul{a}}: \left(\bigotimes_{\nu, \tau}L[x_1,\dots, x_{2n}]^{S_{2n}}\right)^{\otimes 2} \to \bigotimes_{\nu, \tau} L[x_1,\dots, x_{2n}]^{S_n\times S_n}$ is surjective. It is enough to show that $E_{\nu, \tau}\otimes E_{\nu, \tau, \ul{a}}:\left(L[x_1,\dots, x_{2n}]^{S_{2n}}\right)^{\otimes 2} \to L[x_1,\dots, x_{2n}]^{S_n\times S_n}$ is surjective for each $\nu, \tau$. 

        Since $L$ is a field of characteristic $0$, $L[x_1,\dots, x_{2n}]^{S_{2n}}$ is generated as an algebra by polynomials $p_k(\ul{x})$ for $k\geq 1$ where $p_k(\ul{x}) = \sum_{i = 1}^{2n}x_i^k$ are the power basis elements. Similarly, $L[x_1,\dots, x_{2n}]^{S_{n}\times S_n}$ is generated by polynomials $q_k(\ul{x})$, $r_k(\ul{x})$ for $k\geq 0$, where $$q_k(\ul{x}) = \sum_{i = 1}^n(-x_i-n/2)^k= \sum_{i = 1}^n y_i^k, \ \ r_k(\ul{x}) = \sum_{i = 1}^{n}(x_{i+n} + n/2)^k= \sum_{i = 1}^ny_{i+n}^k$$ where $y_i:=-x_i-n/2$ and $y_{i+n}:=x_{i+n}+n/2$ for $i\leq n$. In terms of these bases, we are left to show that the polynomials $m_k(\ul{x}) := E(p_k(\ul{x})) = \sum_{i = 1}^{2n}y_i^k$ and $m_k^a(\ul{x}) := E_{a}(p_k(\ul{x})) = \sum_{i = 1}^n (y_i-1)^k + \sum_{i = 1}^n(y_{i + n} + 1)^k$ generate all $q_j(\ul{x}), r_j(\ul{x})$ for all $j\geq1$.

        We can prove this by induction on $j$. For $j = 1$, note that $m_2^a(\ul{x}) = m_2(\ul{x}) + 2\sum_{i = 1}^ny_{i+n} -2\sum_{i=1}^{n}y_i +2n = m_2(\ul{x})-2m_1(\ul{x})+2n+4r_1(\underline{x})$. Therefore $r_1(\underline{x})$ is in the image and so is $q_1(\underline{x})=m_1(\underline{x})-r_1(\ul{x})$. This finishes the base case.

        Now assuming that all $q_{\ell}, r_{\ell}$ for $1 \le \ell \le j-1$ lie in the image, we want to show this is true for $q_j$ and $r_j$ as well. Note
        \begin{equation}
        \label{eq: HCInductiveStep}
        m_{j+1}^a(\ul{x}) = m_{j+1}(\ul{x}) + (j+1)r_j(\ul{x})-(j+1)q_j(\ul{x})+r
        \end{equation}
        where $r$ is in the subalgebra generated by the $q_{\ell}, r_{\ell}$ for $1 \le \ell \le j-1$. Combined with $m_{j} = q_{j} + r_{j}$, we deduce that $q_j$ and $r_j$ lie in the image. This finishes the inductive step.
\end{proof}
We can now complete the proof of the main theorem.
\begin{theorem}[Main theorem \ref{theorem: Main Theorem}]\label{theorem: main theorem v2}
    Let $F$ be a CM field which contains an imaginary quadratic field in which $p$ splits.\footnote{See Remark \ref{remark: discussion on ass.} for further details on these assumptions and their (weaker) replacement.} Let $K^p \subset \GL_n(\mathbf A_{F}^{p,\infty})$ be a compact open and denote by $\wt{\HH}^i(K^p,\mathbf Q_p)$, $i \geq 0$, the corresponding completed cohomology groups for $\GL_n/F$. Let $\mf{m}\subset \T^S(K^p)$ be a non--Eisenstein decomposed generic maximal ideal. Then $Z(\mf{g}):=Z(\Res_{F/\Q}\mf{gl}_n)_{\Q_p}$ acts on $\wt{\HH}^i(K^p, \mathbf Q_p)^{\GL_n(F_p)-\on{la}}_{\mf{m}}$ via $\zeta^{C}:Z(\mf{g})\rightarrow \T^S(K^p)_{\mf{m}}^{\on{rig}}$ as defined in Section \ref{section: Compatibility between rhoTilde and rho}.
\end{theorem}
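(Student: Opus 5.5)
The plan is to assemble the main theorem from the pieces already proven. First, we extend scalars to a finite extension $L/\Q_p$ containing $[F:\Q]$ embeddings of $F$; this is harmless because the action and the characters are compatible with base change. It therefore suffices to show that for every $z\in Z(\mf{g})_L$ the element $z-\zeta^C(z)$ annihilates $\wt{\HH}^i(K^p,L)^{\on{la}}_{\mf{m}}$. By the discussion at the start of Section \ref{section: Compatibility between rhoTilde and rho}, i.e. Lemma \ref{lemma: pos and neg} applied to $\rho$, we have $\zeta^C=\zeta^{C,-}$, so we may work with $\zeta^{C,-}$ throughout.

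Let $A\subset Z(\mf{g})_L$ be the set of $z$ acting on $\wt{\HH}^i(K^p,L)^{\on{la}}_{\mf{m}}$ through $\zeta^{C,-}(z)$. Since $\zeta^{C,-}$ is an $L$-algebra homomorphism into the commutative ring $\T^S(K^p)^{\on{rig}}_{\mf{m}}$, whose action commutes with that of $Z(\mf{g})$, $A$ is an $L$-subalgebra. We then show that $A$ contains two subalgebras.

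First, $A\supseteq\HC^{\ol{P}}(Z(\wt{\mf{g}})_L)$. By Theorem \ref{theorem: com. supp. inf. act.} together with Lemmas \ref{lemma: pos and neg} and \ref{lemma: anti + inf char}, $Z(\wt{\mf{g}})$ acts on $\wt{\HH}^{i+1}_c(\wt{K}^p,L)^{\on{la}}_{\wt{\mf{m}}}$ through $\wt{\zeta}^{C,-}_{\rm BM}$. Corollary \ref{cor: transfer of inf act rigid}(2), which rests on Theorem \ref{theorem: G-la in G tilde la} and applies for $i\le d-1$, transfers this to an action of $\HC^{\ol{P}}(Z(\wt{\mf{g}}))$ on $\wt{\HH}^i(K^p,L)^{\on{la}}_{\mf{m}}$ through $\mc{S}\circ\wt{\zeta}^{C,-}_{\rm BM}\circ(\HC^{\ol{P}})^{-1}$. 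Theorem \ref{theorem: Harish Chandra + Inf Char compatibility} identifies this action with $\zeta^{C,-}$.

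Second, $A\supseteq\phi(\HC^{\ol{P}}(Z(\wt{\mf{g}})_L))$. This is exactly Corollary \ref{corollary: phi(HC)}, which is obtained by running the preceding argument for $\mf{m}(\chi_{\on{cyc}})$ (Remark \ref{remark: prev section for m(chi)}) and transporting it back along Proposition \ref{proposition: Twists Galois inf char of GLn by chi} and Corollary \ref{cor: twist on the automorphic side}.

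From the explicit description before Lemma \ref{lemma: JointSurjectivityHCMaps} we have $\phi(\HC^{\ol{P}}(Z(\wt{\mf{g}})_L))=\HC^{\ol{P}}_{\ul{a}}(Z(\wt{\mf{g}})_L)$. Lemma \ref{lemma: JointSurjectivityHCMaps} then shows that the two subalgebras together generate $Z(\mf{g})_L$, so $A=Z(\mf{g})_L$.

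It remains to treat degrees $i\ge d$. Since $\dim X^G_K=d-1$, for $i\ge d$ we have $\wt{\HH}^i(K^p,L)_{\mf{m}}=0$, so the statement is vacuous there. The main point requiring care is bookkeeping rather than new input: one must check that the identification $\phi(\HC^{\ol{P}})=\HC^{\ol{P}}_{\ul{a}}$ is correct, including the signs coming from $j_\vee$ on the first block, and that the Galois-side twist in Lemma \ref{lemma: Galois twist} uses the same normalizations as $\zeta^{C,-}$. Once this is in place, the theorem follows formally from the results cited above.
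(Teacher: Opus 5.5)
Your proposal is correct and follows the paper's proof essentially verbatim. You base change to $L$, use $\zeta^C=\zeta^{C,-}$, get the action of $\HC^{\ol{P}}(Z(\wt{\mf{g}})_L)$ from Theorem \ref{theorem: Harish Chandra + Inf Char compatibility} (via Corollary \ref{cor: transfer of inf act rigid}(2)), get the action of its $\phi$-twist from Corollary \ref{corollary: phi(HC)}, and conclude with Lemma \ref{lemma: JointSurjectivityHCMaps}. Your explicit subalgebra argument and the observation that degrees $i\ge d$ are vacuous only spell out steps the paper leaves implicit.
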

\begin{proof}
It suffices to prove the result after base--change to $L$, as usual. Using Theorem \ref{theorem: Harish Chandra + Inf Char compatibility} together with Corollary \ref{corollary: phi(HC)} and Lemma \ref{lemma: JointSurjectivityHCMaps}, we deduce that, for any $i$, $Z(\mf{g})_L$ acts on $\wt{\HH}^i(K^p,L)_{\mf{m}}^{\GL_n(F_p)-\on{la}}$ via $\zeta^C=\zeta^{C,-}$ as wanted.
\end{proof}
\begin{remark}[relation to Conjecture \ref{DPSconjecture}]
\label{Remark: actually proving the main theorem}
    Localizing further by any character $x:\T^S(K^p)_{\mf{m}}^{\on{rig}} \rightarrow \overline{\mathbf Q}_p$\footnote{Note that by \cite[Lem.\,7.1.9]{de1995crystalline} there is a bijection between $\on{m}-\Spec \T^S(K^p)_{\mf{m}}[1/p]$ and points of $\T^S(K^p)_{\mf{m}}^{\on{rig}}$.} proves the infinitesimal action of $Z(\mf{g})$ on the eigenspace $\wt{\HH}^i(K^p, \mathbf Q_p)_{\mf{m}}^{\on{la}}[\mf{m}_x]$ is given by $\zeta^C_{\rho_{\mf{m}_x}}$ attached to (usual $C$-group twist of) $\rho_{\mf{m}_x}: \Gal_{F,S}\rightarrow \GL_n(\overline{\mathbf Q}_p)$, as in \cite{DPS25}, where $\rho_{\mf{m}_x}$ is the Galois representation given by specializing Theorem \ref{theorem: Galois representation for GLn} with respect to $x$.
\end{remark}
\newpage

\bibliographystyle{alpha}
\bibliography{References}

@article {AC24,
    AUTHOR = {A'Campo, Lambert},
     TITLE = {Rigidity of automorphic {G}alois representations over {CM}
              fields},
   JOURNAL = {Int. Math. Res. Not. IMRN},
  FJOURNAL = {International Mathematics Research Notices. IMRN},
      YEAR = {2024},
    NUMBER = {6},
     PAGES = {4541--4623},
      ISSN = {1073-7928,1687-0247},
   MRCLASS = {11F80 (11F75)},
  MRNUMBER = {4721650},
       DOI = {10.1093/imrn/rnad087},
       URL = {https://doi.org/10.1093/imrn/rnad087},
}

@article {10AuthorPaper,
    AUTHOR = {Allen, Patrick B. and Calegari, Frank and Caraiani, Ana and
              Gee, Toby and Helm, David and Le Hung, Bao V. and Newton,
              James and Scholze, Peter and Taylor, Richard and Thorne, Jack
              A.},
     TITLE = {Potential automorphy over {CM} fields},
   JOURNAL = {Ann. of Math. (2)},
  FJOURNAL = {Annals of Mathematics. Second Series},
    VOLUME = {197},
      YEAR = {2023},
    NUMBER = {3},
     PAGES = {897--1113},
      ISSN = {0003-486X,1939-8980},
   MRCLASS = {11F80 (11F55 11F75 11G18)},
  MRNUMBER = {4564261},
       DOI = {10.4007/annals.2023.197.3.2},
       URL = {https://doi.org/10.4007/annals.2023.197.3.2},
}

@article {BLGGT11,
    AUTHOR = {Barnet-Lamb, Thomas and Gee, Toby and Geraghty, David and
              Taylor, Richard},
     TITLE = {Local-global compatibility for {$l=p$}, {II}},
   JOURNAL = {Ann. Sci. \'Ec. Norm. Sup\'er. (4)},
  FJOURNAL = {Annales Scientifiques de l'\'Ecole Normale Sup\'erieure.
              Quatri\`eme S\'erie},
    VOLUME = {47},
      YEAR = {2014},
    NUMBER = {1},
     PAGES = {165--179},
      ISSN = {0012-9593,1873-2151},
   MRCLASS = {11F70 (11R39 22Exx)},
  MRNUMBER = {3205603},
MRREVIEWER = {Nguy\cftil en Qu\^oc Th\'ang},
       DOI = {10.24033/asens.2212},
       URL = {https://doi.org/10.24033/asens.2212},
}

@article {BHSAnnalen,
    AUTHOR = {Breuil, Christophe and Hellmann, Eugen and Schraen, Benjamin},
     TITLE = {Une interpr\'{e}tation modulaire de la vari\'{e}t\'{e}
              trianguline},
   JOURNAL = {Math. Ann.},
  FJOURNAL = {Mathematische Annalen},
    VOLUME = {367},
      YEAR = {2017},
    NUMBER = {3-4},
     PAGES = {1587--1645},
      ISSN = {0025-5831,1432-1807},
   MRCLASS = {11F85 (14D24 14F30)},
  MRNUMBER = {3623233},
       DOI = {10.1007/s00208-016-1422-1},
       URL = {https://doi.org/10.1007/s00208-016-1422-1},
}

@article{buzzard2014conjectural,
  title={The conjectural connections between automorphic representations and {G}alois representations},
  author={Buzzard, Kevin and Gee, Toby},
  journal={Automorphic forms and Galois representations},
  volume={1},
  pages={135--187},
  year={2014}
}

@article {CalEm09,
    AUTHOR = {Calegari, Frank and Emerton, Matthew},
     TITLE = {Bounds for multiplicities of unitary representations of
              cohomological type in spaces of cusp forms},
   JOURNAL = {Ann. of Math. (2)},
  FJOURNAL = {Annals of Mathematics. Second Series},
    VOLUME = {170},
      YEAR = {2009},
    NUMBER = {3},
     PAGES = {1437--1446},
      ISSN = {0003-486X,1939-8980},
   MRCLASS = {22E55 (11F70 11F75)},
  MRNUMBER = {2600878},
MRREVIEWER = {Neven\ Grbac},
       DOI = {10.4007/annals.2009.170.1437},
       URL = {https://doi.org/10.4007/annals.2009.170.1437},
}

@incollection {CalEmSurvey,
    AUTHOR = {Calegari, Frank and Emerton, Matthew},
     TITLE = {Completed cohomology---a survey},
 BOOKTITLE = {Non-abelian fundamental groups and {I}wasawa theory},
    SERIES = {London Math. Soc. Lecture Note Ser.},
    VOLUME = {393},
     PAGES = {239--257},
 PUBLISHER = {Cambridge Univ. Press, Cambridge},
      YEAR = {2012},
      ISBN = {978-1-107-64885-2},
   MRCLASS = {11F85 (11F70 11F75)},
  MRNUMBER = {2905536},
MRREVIEWER = {Neven\ Grbac},
}

@article{caraiani2020shimura,
  title={Shimura varieties at level {$\Gamma_1(p^{\infty})$} and {G}alois representations},
  author={Caraiani, Ana and Gulotta, Daniel R and Hsu, Chi-Yun and Johansson, Christian and Mocz, Lucia and Reinecke, Emanuel and Shih, Sheng-Chi},
  journal={Compositio Mathematica},
  volume={156},
  number={6},
  pages={1152--1230},
  year={2020},
  publisher={London Mathematical Society}
}

@misc{CN2023,
      title={On the modularity of elliptic curves over imaginary quadratic fields},
      author={Ana Caraiani and James Newton},
      year={2023},
      note={\href{https://arxiv.org/abs/2301.10509}{arXiv:2301.10509}},
      eprint={2301.10509},
      archivePrefix={arXiv},
      primaryClass={math.NT}
}

@article{caraiani2023vanishing,
  title={Vanishing theorems for {S}himura varieties at unipotent level},
  author={Caraiani, Ana and Gulotta, Daniel R and Johansson, Christian},
  journal={Journal of the European Mathematical Society},
  volume={25},
  number={3},
  pages={869--911},
  year={2023},
  publisher={European Mathematical Society}
}

@article {CS24,
    AUTHOR = {Caraiani, Ana and Scholze, Peter},
     TITLE = {On the generic part of the cohomology of non-compact unitary
              {S}himura varieties},
   JOURNAL = {Ann. of Math. (2)},
  FJOURNAL = {Annals of Mathematics. Second Series},
    VOLUME = {199},
      YEAR = {2024},
    NUMBER = {2},
     PAGES = {483--590},
      ISSN = {0003-486X,1939-8980},
   MRCLASS = {11R39 (14G35 14G45)},
  MRNUMBER = {4713019},
MRREVIEWER = {Nguy\cftil en Qu\^oc Th\'ang},
       DOI = {10.4007/annals.2024.199.2.1},
       URL = {https://doi.org/10.4007/annals.2024.199.2.1},
}

@incollection {Che14,
    AUTHOR = {Chenevier, Ga\"etan},
     TITLE = {The {$p$}-adic analytic space of pseudocharacters of a
              profinite group and pseudorepresentations over arbitrary
              rings},
 BOOKTITLE = {Automorphic forms and {G}alois representations. {V}ol. 1},
    SERIES = {London Math. Soc. Lecture Note Ser.},
    VOLUME = {414},
     PAGES = {221--285},
 PUBLISHER = {Cambridge Univ. Press, Cambridge},
      YEAR = {2014},
      ISBN = {978-1-107-69192-6},
   MRCLASS = {11F70 (11F80 14G22 20E18)},
  MRNUMBER = {3444227},
MRREVIEWER = {Cameron\ Franc},
}

@article{de1995crystalline,
  title={Crystalline {D}ieudonn{\'e} module theory via formal and rigid geometry},
  author={De Jong, A. J.},
  journal={Publications Math{\'e}matiques de l'IH{\'E}S},
  volume={82},
  pages={5--96},
  year={1995}
}

@article {DPS23,
    AUTHOR = {Dospinescu, Gabriel and Pa\v{s}k\={u}nas, Vytautas and Schraen,
              Benjamin},
     TITLE = {Gelfand-{K}irillov dimension and the {$p$}-adic
              {J}acquet-{L}anglands correspondence},
   JOURNAL = {J. Reine Angew. Math.},
  FJOURNAL = {Journal f\"ur die Reine und Angewandte Mathematik. [Crelle's
              Journal]},
    VOLUME = {801},
      YEAR = {2023},
     PAGES = {57--114},
      ISSN = {0075-4102,1435-5345},
   MRCLASS = {22E50 (11S37)},
  MRNUMBER = {4621880},
MRREVIEWER = {Dongwen\ Liu},
       DOI = {10.1515/crelle-2023-0033},
       URL = {https://doi.org/10.1515/crelle-2023-0033},
}

@article {DPS25,
    AUTHOR = {Dospinescu, Gabriel and Pa\v{s}k\={u}nas, Vytautas and Schraen,
              Benjamin},
     TITLE = {Infinitesimal characters in arithmetic families},
   JOURNAL = {Selecta Math. (N.S.)},
  FJOURNAL = {Selecta Mathematica. New Series},
    VOLUME = {31},
      YEAR = {2025},
    NUMBER = {4},
     PAGES = {Paper No. 76, 77},
      ISSN = {1022-1824,1420-9020},
   MRCLASS = {22E50 (11F80 11F85)},
  MRNUMBER = {4940362},
       DOI = {10.1007/s00029-025-01045-6},
       URL = {https://doi.org/10.1007/s00029-025-01045-6},
}

@misc{emerson2018comparison,
  title={Comparison of different definitions of pseudocharacters},
  author={Emerson, Kathleen and Morel, Sophie},
  year={2023},
  note={\href{https://arxiv.org/abs/2310.03869}{arXiv:2310.03869}}
}

@article {Emerton_Jacquet_I,
    AUTHOR = {Emerton, Matthew},
     TITLE = {Jacquet modules of locally analytic representations of
              {$p$}-adic reductive groups. {I}. {C}onstruction and first
              properties},
   JOURNAL = {Ann. Sci. \'{E}cole Norm. Sup. (4)},
  FJOURNAL = {Annales Scientifiques de l'\'{E}cole Normale Sup\'{e}rieure.
              Quatri\`eme S\'{e}rie},
    VOLUME = {39},
      YEAR = {2006},
    NUMBER = {5},
     PAGES = {775--839},
      ISSN = {0012-9593},
   MRCLASS = {22E50},
  MRNUMBER = {2292633},
MRREVIEWER = {Bertrand\ Lemaire},
       DOI = {10.1016/j.ansens.2006.08.001},
       URL = {https://doi.org/10.1016/j.ansens.2006.08.001},
}

@article{emerton2006interpolation,
  title={On the interpolation of systems of eigenvalues attached to automorphic {H}ecke eigenforms},
  author={Emerton, Matthew},
  journal={Inventiones mathematicae},
  volume={164},
  number={1},
  pages={1--84},
  year={2006},
  publisher={Springer-Verlag, Berlin/Heidelberg}
}

@article {Emerton_Jacquet_II,
    AUTHOR = {Emerton, Matthew},
     TITLE = {Jacquet modules of locally analytic representations of
              {$p$}-adic reductive groups. {II}. {T}he relation to parabolic induction.},
   JOURNAL = {To appear in  J. Institut Math. Jussieu.},
  FJOURNAL = {To appear in  J. Institut Math. Jussieu.},
      YEAR = {2007},
}

@article{emerton2020density,
  title={On the density of supercuspidal points of fixed regular weight in local deformation rings and global {H}ecke algebras},
  author={Emerton, Matthew and Pa{\v{s}}k{\=u}nas, Vytautas},
  journal={Journal de l'{\'E}cole polytechnique Math{\'e}matiques},
  volume={7},
  pages={337--371},
  year={2020}
}

@misc{FuDerived,
      title={A derived construction of eigenvarieties},
      author={Weibo Fu},
      year={2022},
      note={\href{https://arxiv.org/abs/2110.04797}{arXiv:2110.04797}},
      eprint={2110.04797},
      archivePrefix={arXiv},
      primaryClass={math.NT}
}

@article {GeeNewton,
    AUTHOR = {Gee, Toby and Newton, James},
     TITLE = {Patching and the completed homology of locally symmetric
              spaces},
   JOURNAL = {J. Inst. Math. Jussieu},
  FJOURNAL = {Journal of the Institute of Mathematics of Jussieu. JIMJ.
              Journal de l'Institut de Math\'{e}matiques de Jussieu},
    VOLUME = {21},
      YEAR = {2022},
    NUMBER = {2},
     PAGES = {395--458},
      ISSN = {1474-7480,1475-3030},
   MRCLASS = {11F75 (11F80 22E50)},
  MRNUMBER = {4386819},
MRREVIEWER = {Atsushi\ Yamagami},
       DOI = {10.1017/S1474748020000158},
       URL = {https://doi.org/10.1017/S1474748020000158},
}

@article {HJ23,
    AUTHOR = {Hansen, David and Johansson, Christian},
     TITLE = {Perfectoid {S}himura varieties and the {C}alegari-{E}merton
              conjectures},
   JOURNAL = {J. Lond. Math. Soc. (2)},
  FJOURNAL = {Journal of the London Mathematical Society. Second Series},
    VOLUME = {108},
      YEAR = {2023},
    NUMBER = {5},
     PAGES = {1954--2000},
      ISSN = {0024-6107,1469-7750},
   MRCLASS = {11S37 (14G45)},
  MRNUMBER = {4668521},
       DOI = {10.1112/jlms.12799},
       URL = {https://doi.org/10.1112/jlms.12799},
}

@article {HLTT16,
    AUTHOR = {Harris, Michael and Lan, Kai-Wen and Taylor, Richard and
              Thorne, Jack},
     TITLE = {On the rigid cohomology of certain {S}himura varieties},
   JOURNAL = {Res. Math. Sci.},
  FJOURNAL = {Research in the Mathematical Sciences},
    VOLUME = {3},
      YEAR = {2016},
     PAGES = {Paper No. 37, 308},
      ISSN = {2522-0144,2197-9847},
   MRCLASS = {11G18 (11F75 14G35)},
  MRNUMBER = {3565594},
MRREVIEWER = {Fausto\ Jarqu\'in Z\'arate},
       DOI = {10.1186/s40687-016-0078-5},
       URL = {https://doi.org/10.1186/s40687-016-0078-5},
}

@misc{Hevesi23,
      title={Ordinary parts and local-global compatibility at $\ell=p$},
      author={Bence Hevesi},
      year={2023},
      note={\href{https://arxiv.org/abs/2311.13514}{arXiv:2311.13514}},
      eprint={2311.13514},
      archivePrefix={arXiv},
      primaryClass={math.NT},
      url={https://arxiv.org/abs/2311.13514},
}

@misc{JNWE24,
      title={Moduli stacks of {G}alois representations and the $p$-adic local {L}anglands correspondence for $\mathrm{GL}_2(\mathbb{Q}_p)$},
      author={Christian Johansson and James Newton and Carl Wang-Erickson},
      note={\href{https://arxiv.org/abs/2403.19565}{arXiv:2403.19565}, \textit{to appear in Compositio Mathematica}},
      year={2024},
      eprint={2403.19565},
      archivePrefix={arXiv},
      primaryClass={math.NT},
      url={https://arxiv.org/abs/2403.19565},
}

@article{khare2017potential,
  title={Potential automorphy and the {L}eopoldt conjecture},
  author={Khare, Chandrashekhar B and Thorne, Jack A},
  journal={American Journal of Mathematics},
  volume={139},
  number={5},
  pages={1205--1273},
  year={2017},
  publisher={Johns Hopkins University Press}
}

@misc{KoshikawaGeneric,
      title={On the generic part of the cohomology of local and global {S}himura varieties},
      author={Teruhisa Koshikawa},
      year={2021},
      note={\href{https://arxiv.org/abs/2106.10602}{arXiv:2106.10602}},
      eprint={2106.10602},
      archivePrefix={arXiv},
      primaryClass={math.NT}
}

@misc{mcdonald2025eigenvarieties,
  title={Eigenvarieties over {CM} fields and trianguline representations},
  author={McDonald, Vaughan},
  note={\href{https://arxiv.org/abs/2504.18319}{arXiv:2504.18319}},
  year={2025}
}

@article {NT16,
    AUTHOR = {Newton, James and Thorne, Jack A.},
     TITLE = {Torsion {G}alois representations over {CM} fields and {H}ecke
              algebras in the derived category},
   JOURNAL = {Forum Math. Sigma},
  FJOURNAL = {Forum of Mathematics. Sigma},
    VOLUME = {4},
      YEAR = {2016},
     PAGES = {Paper No. e21, 88},
      ISSN = {2050-5094},
   MRCLASS = {11F80 (11F75)},
  MRNUMBER = {3528275},
MRREVIEWER = {Alan\ Koch},
       DOI = {10.1017/fms.2016.16},
       URL = {https://doi.org/10.1017/fms.2016.16},
}

@misc{PasQua25,
      title={Infinitesimal characters and {L}afforgue's pseudocharacters},
      author={Vytautas Pa\v{s}k\={u}nas and Julian Quast},
      note={\href{https://arxiv.org/abs/2505.03544}{arXiv:2505.03544}},
      year={2025},
      eprint={2505.03544},
      archivePrefix={arXiv},
      primaryClass={math.NT},
      url={https://arxiv.org/abs/2505.03544},
}

@article{quast2026deformations,
  title={Deformations of {G}-valued pseudocharacters},
  author={Quast, Julian},
  journal={Peking Mathematical Journal},
  pages={1--59},
  year={2026},
  publisher={Springer}
}

@article{schneider2002banach,
  title={Banach space representations and {I}wasawa theory},
  author={Schneider, Peter and Teitelbaum, Jeremy},
  journal={Israel {J}ournal of {M}athematics},
  volume={127},
  number={1},
  pages={359--380},
  year={2002},
  publisher={Springer}
}

@article{schneider2002locally,
  title={Locally analytic distributions and $p$-adic representation theory, with applications to $\mathrm{GL}_2(\mathbf{Q}_p)$},
  author={Schneider, Peter and Teitelbaum, Jeremy},
  journal={Journal of the American Mathematical Society},
  volume={15},
  number={2},
  pages={443--468},
  year={2002}
}

@article {ST03,
    AUTHOR = {Schneider, Peter and Teitelbaum, Jeremy},
     TITLE = {Algebras of {$p$}-adic distributions and admissible
              representations},
   JOURNAL = {Invent. Math.},
  FJOURNAL = {Inventiones Mathematicae},
    VOLUME = {153},
      YEAR = {2003},
    NUMBER = {1},
     PAGES = {145--196},
      ISSN = {0020-9910,1432-1297},
   MRCLASS = {22E50},
  MRNUMBER = {1990669},
MRREVIEWER = {Marko\ Tadi\'c},
       DOI = {10.1007/s00222-002-0284-1},
       URL = {https://doi.org/10.1007/s00222-002-0284-1},
}

@article {Sch15,
    AUTHOR = {Scholze, Peter},
     TITLE = {On torsion in the cohomology of locally symmetric varieties},
   JOURNAL = {Ann. of Math. (2)},
  FJOURNAL = {Annals of Mathematics. Second Series},
    VOLUME = {182},
      YEAR = {2015},
    NUMBER = {3},
     PAGES = {945--1066},
      ISSN = {0003-486X,1939-8980},
   MRCLASS = {11S37},
  MRNUMBER = {3418533},
MRREVIEWER = {Kimball\ L.\ Martin},
       DOI = {10.4007/annals.2015.182.3.3},
       URL = {https://doi.org/10.4007/annals.2015.182.3.3},
}

@article {Shin14,
    AUTHOR = {Shin, Sug Woo},
     TITLE = {On the cohomological base change for unitary
similitude groups},
   JOURNAL = {Compos. Math.},
  FJOURNAL = {Compositio Mathematica},
    VOLUME = {150},
      YEAR = {2014},
    NUMBER = {2},
     PAGES = {191--228},
      ISSN = {0010-437X,1570-5846},
   MRCLASS = {11F33 (11F55 11F70)},
  MRNUMBER = {3177267},
MRREVIEWER = {Luis\ Alberto\ Lomel\'i},
       DOI = {10.1112/S0010437X13007355},
       URL = {https://doi.org/10.1112/S0010437X13007355},
      NOTE = {Appendix to \textit{Galois representations associated to holomorphic limits of discrete series,} by Wushi Goldring}
}

@misc{stacks-project,
	author = {The {Stacks Project Authors}},
	howpublished = {\url{https://stacks.math.columbia.edu}},
	shorthand = {Stacks},
	title = {\textit{Stacks Project}},
	year = {2018}}

\end{document}